\documentclass{amsart}
\usepackage{amssymb}
\usepackage[mathcal]{euscript}
\usepackage[cmtip,all]{xy}
\usepackage{color}


\newcommand{\bydef}{:=}
\newcommand{\defby}{=:}

\newcommand{\vphi}{\varphi}
\newcommand{\veps}{\varepsilon}
\newcommand{\ul}[1]{\underline{#1}}

\newcommand{\wt}[1]{\widetilde{#1}}
\newcommand{\wb}[1]{\overline{#1}}

\newcommand{\id}{\mathrm{id}}

\newcommand{\lspan}[1]{\mathrm{span}\left\{#1\right\}}

\newcommand{\diag}{\mathrm{diag}}

\DeclareMathOperator*{\ot}{\otimes}


\newcommand{\bi}{\mathbf{i}}


\newcommand{\cA}{\mathcal{A}}

\newcommand{\cC}{\mathcal{C}}

\newcommand{\cG}{\mathcal{G}}

\newcommand{\cK}{\mathcal{K}}
\newcommand{\cL}{\mathcal{L}}

\newcommand{\cQ}{\mathcal{Q}}
\newcommand{\cR}{\mathcal{R}}
\newcommand{\cS}{\mathcal{S}}

\newcommand{\cU}{\mathcal{U}}

\newcommand{\frg}{{\mathfrak g}}

\newcommand{\frf}{{\mathfrak f}}

\DeclareMathOperator{\CD}{\mathfrak{CD}}


\newcommand{\ZZ}{\mathbb{Z}}

\newcommand{\RR}{\mathbb{R}}
\newcommand{\CC}{\mathbb{C}}
\newcommand{\HH}{\mathbb{H}}
\newcommand{\OO}{\mathbb{O}}
\newcommand{\FF}{\mathbb{F}}
\newcommand{\KK}{\mathbb{K}}
\newcommand{\chr}[1]{\mathrm{char}\,#1}


\DeclareMathOperator{\Hom}{\mathrm{Hom}}
\DeclareMathOperator{\End}{\mathrm{End}}
\DeclareMathOperator{\Alg}{\mathrm{Alg}}
\DeclareMathOperator{\im}{\mathrm{im}\,}
\DeclareMathOperator{\Aut}{\mathrm{Aut}}
\DeclareMathOperator{\AAut}{\mathbf{Aut}}
\DeclareMathOperator{\Der}{\mathrm{Der}}
\DeclareMathOperator{\supp}{\mathrm{Supp}\,}


\newcommand{\ad}{\mathrm{ad}}
\newcommand{\Ad}{\mathrm{Ad}}

\newcommand{\frso}{{\mathfrak{so}}}
\newcommand{\frpsl}{{\mathfrak{psl}}}

\newcommand{\fru}{{\mathfrak{u}}}
\newcommand{\tri}{\mathfrak{tri}}


\newcommand{\GL}{\mathrm{GL}}

\newcommand{\Ort}{\mathrm{O}}

\newcommand{\PGO}{\mathrm{PGO}}
\newcommand{\Spin}{\mathrm{Spin}}
\newcommand{\TRI}{\mathrm{Tri}}




\newcommand{\Gs}{\mathbf{G}}

\newcommand{\GLs}{\mathbf{GL}}

\newcommand{\PGOs}{\mathbf{PGO}}
\newcommand{\Spins}{\mathbf{Spin}}
\newcommand{\TRIs}{\mathbf{Tri}}

\newcommand{\Qs}{\mathbf{Q}}









\newcommand{\LL}{\mathbb{L}}

\newcommand{\Cl}{\mathfrak{Cl}} 
\newcommand{\TC}{\mathfrak{TC}} 

\newcommand{\Gal}{\mathrm{Gal}} 

\newtheorem{theorem}{Theorem}
\newtheorem{proposition}[theorem]{Proposition}
\newtheorem{lemma}[theorem]{Lemma}
\newtheorem{corollary}[theorem]{Corollary}

\theoremstyle{definition}
\newtheorem{df}[theorem]{Definition}

\theoremstyle{remark}
\newtheorem{remark}[theorem]{Remark}

\begin{document}

\title{Gradings on the simple real Lie algebras of types $G_2$ and $D_4$}

\author[A. Elduque]{Alberto Elduque${}^\star$}
\address{Departamento de Matem\'{a}ticas
 e Instituto Universitario de Matem\'aticas y Aplicaciones,
 Universidad de Zaragoza, 50009 Zaragoza, Spain}
\email{elduque@unizar.es}
\thanks{${}^\star$supported by the Spanish Ministerio de Econom\'{\i}a y Competitividad---Fondo Europeo de Desarrollo Regional (FEDER)  MTM2013-45588-C3-2-P}

\author[M. Kochetov]{Mikhail Kochetov${}^{\star\star}$}
\address{Department of Mathematics and Statistics,
 Memorial University of Newfoundland,
 St. John's, NL, A1C5S7, Canada}
\email{mikhail@mun.ca}
\thanks{${}^{\star\star}$supported by Discovery Grant 341792-2013 of the Natural Sciences and Engineering Research Council (NSERC) of Canada and a grant for visiting scientists by Instituto Universitario de Matem\'aticas y Aplicaciones, University of Zaragoza}

\subjclass[2010]{Primary 17B70; Secondary 17B25, 17C40, 17A75}

\keywords{Graded algebra, real simple Lie algebra, composition algebra, 
cyclic composition, twisted composition, triality, trialitarian algebra}

\date{}

\begin{abstract}
We classify group gradings on the simple Lie algebras of types $G_2$ and $D_4$ over the field of real numbers 
(or any real closed field): fine gradings up to equivalence and $G$-gradings, with a fixed group $G$, up to isomorphism. 
\end{abstract}

\maketitle

\section{Introduction}

Let $\cU$ be an algebra over a field $\FF$ and let $G$ be a group. A \emph{$G$-grading} on $\cU$ is a 
vector space decomposition $\Gamma:\;\cU=\bigoplus_{g\in G}\cU_g$ such that $\cU_g\cU_h\subset\cU_{gh}$ for all $g,h\in G$. 
The nonzero elements $x\in\cU_g$ are said to be \emph{homogeneous of degree} $g$, which can be written as $\deg x=g$, 
and the \emph{support} is the set $\supp\Gamma\bydef\{g\in G\;|\;\cU_g\neq 0\}$. The algebra $\cU$ may have some additional 
structure, e.g. an involution $\sigma$, in which case $\Gamma$ will be required to respect this structure: 
$\sigma(\cU_g)=\cU_g$ for all $g\in G$. 

Group gradings have been extensively studied for many types of algebras --- associative, Lie, Jordan, composition, etc. 
(see e.g. our recent monograph \cite{EKmon} and the references therein). In the case of gradings on simple Lie algebras, 
the support generates an abelian subgroup of $G$, so it is no loss of generality to assume $G$ abelian. 
We will do so for all gradings considered in this paper and will sometimes write $G$ additively. 
For gradings on finitely generated (in particular, finite-dimensional) algebras, we may always replace $G$ with a finitely 
generated group. All algebras in this paper will be assumed finite-dimensional over the ground field $\FF$ unless indicated otherwise.
Unmarked symbol $\otimes$ will refer to the tensor product over $\FF$. 
 
Of particular interest are \emph{fine} gradings, because any $G$-grading can be obtained from at least one fine grading 
by means of a suitable homomorphism from the universal group of the fine grading to $G$ (see details in the next section). 
The classification of fine gradings (up to equivalence) on all finite-dimensional simple Lie algebras 
over an algebraically closed field of characteristic $0$ has recently been completed by the efforts of many authors: 
see \cite[Chapters 3--6]{EKmon}, \cite{YuExc} and \cite{E14}. The classification of all $G$-gradings (up to isomorphism) 
is also known for these algebras, except for types $E_6$, $E_7$ and $E_8$, over an algebraically closed field of 
characteristic different from $2$: see \cite[Chapters 3--6]{EKmon} and \cite{EK15}.

On the other hand, for many applications, algebras over the field of real numbers are especially important. 
Fine gradings on real forms of the classical simple complex Lie algebras were described in \cite{HPP3}, and all $G$-gradings 
have recently been classified in \cite{BKR_Lie}, with the exception of type $D_4$. Fine gradings of the real forms of the simple complex Lie algebras of types $G_2$ and $F_4$ have been classified in \cite{g2f4}.

Simple Lie algebras of type $D_4$ display exceptional behavior due to the phenomenon of triality. 
In particular, classifying them up to isomorphism over an arbitrary field $\FF$ is much more difficult than for types $D_n$
with $n>4$ (see e.g. \cite[\S 45]{KMRT} for an overview, under the assumption $\chr\FF\ne 2$). 
But over the field $\RR$ (or, more generally, any real closed field), 
since it has no field extensions of degree $3$, triality actually results in \emph{fewer} simple Lie algebras of type $D_4$: 
the algebra $\fru^*(8)$ of skew-symmetric elements in $M_4(\HH)$, 
with respect to the involution given by a skew-hermitian form on the $4$-dimensional space over the division algebra 
of quaternions $\HH$, turns out to be isomorphic to the special orthogonal Lie algebra $\frso_{6,2}(\RR)$. 

Another manifestation of triality is the existence of what we call \emph{Type III} gradings (see the definition below) 
on simple Lie algebras of type $D_4$. Over an algebraically closed field $\FF$ of characteristic different from $2$,
gradings on simple Lie algebras of series $A$, $B$, $C$ and $D$ were classified up to isomorphism in \cite{BK10}, 
but type $D_4$ was excluded because in this case, due to triality, not all gradings are ``matrix'', i.e., come 
from gradings on the associative algebra $M_8(\FF)$ with orthogonal involution by restricting 
to the Lie subalgebra of skew-symmetric elements $\frso_8(\FF)$. 
Those that do come in this way are the Type I and Type II gradings. 
All fine gradings for $D_4$ were classified up to equivalence in \cite{E10} over an algebraically closed field 
of characteristic $0$ (see also \cite{DMV}).

Type III gradings cannot be easily seen in the matrix models of simple Lie algebras of type $D_4$. 
In \cite{EK15}, we constructed and classified them over an algebraically closed field $\FF$ with $\chr\FF\ne 2,3$, 
using the model in terms of so-called \emph{trialitarian algebras} \cite[Chapter X]{KMRT}. 
(Type III gradings do not exist if $\chr\FF=3$.) 
Over an algebraically closed field $\FF$ with $\chr\FF\ne 2$, there is, up to isomorphism, only one trialitarian algebra: 
$E=M_8(\FF)\times M_8(\FF)\times M_8(\FF)$ endowed with orthogonal involution $\sigma$ and 
some additional structure $\alpha$ (see the precise definition in the next section). 
The center of $E$ is the cubic separable algebra $\LL=\FF\times\FF\times\FF$. 
The simple Lie algebra $\cL$ of type $D_4$ is canonically embedded in $E$, and it turns out that the restriction map 
$\AAut_\FF(E,\LL,\sigma,\alpha)\to\AAut_\FF(\cL)$ is an isomorphism of affine group schemes  
\cite[Proposition 45.12]{KMRT}. 

Recall that \emph{affine group schemes} over a field $\FF$ are representable functors from the category $\Alg_\FF$ of 
unital associative commutative $\FF$-algebras (not necessarily of finite dimension) to the category of groups --- we refer the reader 
to \cite{Wh}, \cite[Chapter VI]{KMRT} or \cite[Appendix A]{EKmon} for the background. 
We will follow the common convention of denoting the affine group schemes corresponding to classical groups 
by the same letters, but using bold font to distinguish the scheme from the group 
(which is identified with the group of $\FF$-points of the scheme): for example, $\GLs(V)$ and $\GL(V)=\GLs(V)(\FF)$. 
The automorphism group scheme $\AAut_\FF(\cU)$ of a finite-dimensional algebra $\cU$ is defined by 
$\AAut_\FF(\cU)(\cR)=\Aut_\cR(\cU\ot\cR)$ for every $\cR$ in $\Alg_\FF$. 

Gradings by abelian groups often arise as eigenspace decompositions with respect to a family of commuting diagonalizable automorphisms. 
If $\FF$ is algebraically closed and $\chr\FF=0$ then all abelian group gradings on a finite-dimensional algebra 
can be obtained in this way. Over an arbitrary field, a $G$-grading $\Gamma$ on $\cU$ is equivalent to a homomorphism 
of affine group schemes $\eta_\Gamma:G^D\to\AAut_\FF(\cU)$, where $G^D$ is the diagonalizable group scheme 
represented by the Hopf algebra $\FF G$, as follows: for any $\cR$ in $\Alg_\FF$, the corresponding homomorphism of groups 
$(\eta_\Gamma)_\cR:\Alg_\FF(\FF G,\cR)\to\Aut_\cR(\cU\ot\cR)$ is defined by
\begin{equation}\label{eq:def_eta}
(\eta_\Gamma)_\cR(f)(x\ot r)=x\ot f(g)r\;\text{for all}\;x\in\cU_g, g\in G, r\in\cR, f\in\Alg_\FF(\FF G,\cR).
\end{equation}

Now let $\cL$ be a simple Lie algebra of type $D_4$ over a field $\FF$ with $\chr{\FF}\ne 2$. 
Over an algebraic closure $\FF_\mathrm{alg}$, we have the isomorphism 
$\AAut_{\FF_\mathrm{alg}}(\wt{E},\wt{\LL},\sigma,\alpha)\to\AAut_{\FF_\mathrm{alg}}(\wt{\cL})$ 
mentioned above, where $\wt{\cL}=\cL\otimes\FF_\mathrm{alg}$, $\wt{E}$ is the trialitarian algebra over 
$\FF_\mathrm{alg}$, and $\wt{\LL}=\FF_\mathrm{alg}\times\FF_\mathrm{alg}\times\FF_\mathrm{alg}$ is the center of $\wt{E}$. 
It follows, on the one hand, that every grading on the Lie algebra $\wt{\cL}$ is the restriction of 
a unique grading on the trialitarian algebra $\wt{E}$ (which was the starting point of our analysis in \cite{EK15})
and, on the other hand, that $\cL$ is canonically embedded in a unique trialitarian algebra $E$ over $\FF$ 
\cite[Corollary 45.13]{KMRT}. Moreover, we have the isomorphism $\AAut_\FF(E,\LL,\sigma,\alpha)\to\AAut_\FF(\cL)$ and 
hence a bijective correspondence between gradings on the Lie algebra $\cL$ and on the trialitarian algebra $E$. 

Trialitarian algebras over an arbitrary field can be quite complicated. However, over $\RR$ (or any real closed field), 
we have only two possibilities for the cubic separable algebra $\LL$, namely, $\RR\times\RR\times\RR$ and $\RR\times\CC$. 
The trialitarian algebras $E$ over a field $\FF$ whose centers are of the form $\FF\times\FF\times\FF$ or $\FF\times\KK$,
with $\KK$ a quadratic field extension of $\FF$, as well as the corresponding Lie algebras $\cL(E)$, 
are said to be of type ${}^1D_4$ and ${}^2D_4$, respectively. 
Lie algebras of these types were classified by Jacobson \cite{J64}. Here $E$ has the form $A\times\Cl(A)$ where $A$ is a 
central simple associative $\FF$-algebra of degree $8$, endowed with an orthogonal involution, and $\Cl(A)$ is the
corresponding Clifford algebra \cite[Proposition 43.15]{KMRT}. 
The center $Z(E)$ is $\LL=\FF\times Z(\Cl(A))$ and the Lie algebra $\cL(E)$ is isomorphic, 
by means of the projection onto the first factor, to the algebra of skew-symmetric elements in $A$.
Due to \cite[Proposition 43.6]{KMRT}, it turns out that in the case $\FF=\RR$, $E$ is isomorphic to 
$M_8(\RR)\times M_8(\RR)\times M_8(\RR)$, $M_8(\RR)\times M_4(\HH)\times M_4(\HH)$ or $M_8(\RR)\times M_8(\CC)$, and hence we may assume that $A=M_8(\RR)$ with an orthogonal involution of inertia $(p,q)$, $p+q=8$, 
$p\ge q$. Then $\Cl(A)$ is isomorphic to the even part of the Clifford algebra of the corresponding $8$-dimensional 
quadratic space over $\RR$, so we have the following possibilities:

\begin{description}
\item[$\frso_{8,0}(\RR)$] $E=M_8(\RR)\times M_8(\RR)\times M_8(\RR)\simeq\End_\LL(V)$, where  
$\LL=\RR\times\RR\times\RR$, $V=\OO\otimes\LL$, $\OO$ is the division algebra of octonions, 
and the trialitarian structure on $E$ is induced by the \emph{cyclic composition} structure on $V$;

\item[$\frso_{4,4}(\RR)$] the same as above, but with the split algebra of octonions $\OO_s$ 
instead of the division algebra;

\item[$\frso_{7,1}(\RR)$] $E=M_8(\RR)\times M_8(\CC)\simeq\End_\LL(V)$, where $\LL=\RR\times\CC$ and $V$ is a 
\emph{twisted composition} obtained by Galois descent from the complex cyclic composition $\OO\otimes\LL\otimes\CC$ 
as the fixed points of the operator $\bar{\phantom{x}}\otimes\id\otimes\iota$, 
where $\bar{x}=n(x,1)1-x$ is the standard involution of $\OO$ and $\iota$ is the complex conjugation; 

\item[$\frso_{5,3}(\RR)$] the same as above, but with $\OO_s$ instead of $\OO$;

\item[$\frso_{6,2}(\RR)$] $E=M_8(\RR)\times M_4(\HH)\times M_4(\HH)$; this trialitarian algebra is not of the form 
$\End_\LL(V)$ for any composition $V$.
\end{description}

In all cases, we have a short exact sequence of affine group schemes: 
\begin{equation}\label{eq:exact_D4}
\xymatrix{
\mathbf{1}\ar[r] & \PGOs^+_{p,q}\ar[r] & \AAut_\RR(E,\LL,\sigma,\alpha)\ar[r]^-{\pi} & \AAut_\RR(\LL)\ar[r] & \mathbf{1}
}
\end{equation}
where $\pi$ is the restriction map. Indeed, for the groups of $\CC$-points, this is the well-known
exact sequence $1\to\PGO^+_8(\CC)\to\Aut(\frso_8(\CC))\to S_3\to 1$. If $\LL=\RR\times\RR\times\RR$ then $\AAut_\RR(\LL)$ is 
the constant group scheme $\mathbf{S}_3$ corresponding to the group of permutations $S_3$. 
But if $\LL=\RR\times\CC$ then $\AAut_\RR(\LL)$ is the semidirect product $\boldsymbol{\mu}_3\rtimes\mathbf{C}_2$ 
of the group scheme of cubic roots of unity and the constant group scheme corresponding to the cyclic group $C_2$.
Indeed, the group schemes $\AAut_\RR(\RR\times\CC)$ and $\AAut_\RR(\RR\times\RR\times\RR)$ become isomorphic after the extension of scalars 
from $\RR$ to $\CC$, so $\AAut_\RR(\RR\times\CC)$ is a twisted form of $\mathbf{S}_3$. 
Recall that the twisted forms of the constant group scheme $\mathbf{M}$ over a field $\FF$ corresponding to a finite group $M$ 
are given by (continuous) actions of the absolute Galois group of $\FF$ on $M$ (see e.g. \cite[Proposition 20.16]{KMRT}).  
In the case $M=S_3$ and $\FF=\RR$, there is, up to isomorphism, only one nontrivial action, and it gives rise to the indicated semidirect product. 
Alternatively, we can observe that $\AAut_\RR(\RR\times\CC)$ contains $\AAut_\RR(\CC)\simeq \mathbf{C}_2\simeq\boldsymbol{\mu}_2$ 
and a copy of $\boldsymbol{\mu}_3$ associated to the following grading on $\LL=\RR\times\CC$ by the cyclic group 
of order $3$: $\LL=\RR\oplus\RR\xi\oplus\RR\xi^2$ where $\xi=(1,\omega)\in\LL$ and $\omega$ is a fixed primitive cubic root of unity in $\CC$.

Now suppose that we have a $G$-grading $\Gamma$ on the Lie algebra $\cL$. This corresponds to a homomorphism 
$\eta=\eta_\Gamma\colon G^D\to\AAut_\RR(\cL)\simeq\AAut_\RR(E,\LL,\sigma,\alpha)$. 
The image $\pi\eta(G^D)$ is a diagonalizable subgroupscheme of $\AAut_\RR(\LL)$. 
This subgroupscheme must be proper, because $\AAut_\RR(\LL)$ is not diagonalizable (not even abelian). 
Therefore, we have three possibilities: the image has order $1$, $2$ or $3$, and 
the grading $\Gamma$ will be said to have {\em Type I, II or III} accordingly. 
The subgroupscheme $\eta^{-1}(\PGOs^+_{p,q})$ of $G^D$ corresponds to a subgroup $H$ in $G$ 
of order $1$, $2$ or $3$, respectively. We will refer to $H$ as the {\em distinguished subgroup} of $\Gamma$. 
It is the smallest subgroup of $G$ such that the induced $G/H$-grading is of Type I.

The subgroupschemes of $\mathbf{S}_3$ are the constant group schemes corresponding to the subgroups of $S_3$. Since $\mathbf{A}_3\simeq\mathbf{C}_3$ is not 
diagonalizable (over $\RR$), \emph{there are no Type III gradings} in the cases where $\LL=\RR\times\RR\times\RR$. 
On the other hand, in the cases where $\LL=\RR\times\CC$, the (unique) subgroupscheme of order $3$ in $\AAut_\RR(\LL)$ is diagonalizable, 
and we will show that Type III gradings indeed exist by giving explicit models in terms of the Cayley algebras $\OO$ and $\OO_s$. 

Gradings of Types I and II can be studied using matrix models, and all $G$-gradings on (finite-dimensional) central simple $\RR$-algebras with involution 
were classified in \cite{BKR,BKR_Lie} up to isomorphism. 
Hence, our main concern in this paper are Type III gradings on $\frso_{7,1}(\RR)$ and $\frso_{5,3}(\RR)$. 
In order to deal with these, $G$-gradings on $\OO$ and $\OO_s$ are relevant, which will lead us to the classification of gradings on 
the simple Lie algebras of type $G_2$ over $\RR$.

The paper is structured as follows. Section \ref{s:preliminaries} is a long review of the background on group gradings, Galois descent, composition algebras, and trialitarian algebras, which will be used throughout the paper. 
Section \ref{s:octonions_G2} classifies $G$-gradings up to isomorphism, and fine gradings up to equivalence, on Cayley algebras over arbitrary fields. Due to the close relationship between Cayley algebras and algebraic groups of type $G_2$, these results also give the corresponding classifications for simple Lie algebras of type $G_2$ ($\chr \FF\neq 2,3$ here). Section \ref{s:Types_I_II} briefly treats the gradings of Types I and II on simple real Lie algebras of type $D_4$. These are analogous to the gradings on simple real Lie algebras of type $D_n$ for $n\geq 5$ considered in \cite{BKR_Lie}, although triality has a certain effect on the isomorphism problem. Section \ref{s:lifting} deals with the problem of ``lifting'' a Type III grading on a trialitarian algebra to such a grading on a twisted composition. It turns out that, over a real closed field, any Type III grading on the trialitarian algebra $\End_\LL(V)$ attached to a twisted composition $V$ lifts to exactly two gradings on $V$ (Theorem \ref{th:real_tri_to_comp}). This allows us to reduce the problem of classifying the Type III gradings for simple real Lie algebras of type $D_4$ to the corresponding problem for twisted compositions, and this latter is solved in Section \ref{s:Type_III}. Finally, Section \ref{s:Type_III_fine} is devoted to giving concrete descriptions of the Type III gradings on simple real Lie algebras of type $D_4$ by trasferring explicitly our results on twisted compositions to the associated Lie algebras.

\section{Preliminaries}\label{s:preliminaries}

\subsection{Group gradings on algebras}

In the Introduction, we recalled the definition of a grading on an algebra $\cU$ by a group $G$. There is a more general concept of grading: 
a decomposition $\Gamma:\;\cU=\bigoplus_{s\in S}\cU_s$ into nonzero subspaces indexed by a set $S$ and having the property that, 
for any $s_1,s_2\in S$ with $\cU_{s_1}\cU_{s_2}\ne 0$, there exists (unique) $s_3\in S$ such that $\cU_{s_1}\cU_{s_2}\subset\cU_{s_3}$. 
For such a decomposition $\Gamma$, there may or may not exist a group $G$ containing $S$ that makes $\Gamma$ a $G$-grading. 
If such a group (respectively, abelian group) exists, then $\Gamma$ is said to be a {\em group grading} (respectively, an {\em abelian group grading}). 
However, $G$ is usually not unique even if we require that it should be generated by $S$. The {\em universal grading group} 
(or {\em universal abelian grading group}) is generated by $S$ and has the defining relations $s_1s_2=s_3$ for all $s_1,s_2,s_3\in S$ 
such that $0\ne\cU_{s_1}\cU_{s_2}\subset\cU_{s_3}$ (see e.g. \cite[Chapter~1]{EKmon} for details). 
Here we will deal exclusively with abelian group gradings. 

Let $\Gamma:\, \cU=\bigoplus_{g\in G} \cU_g$ and $\Gamma':\,\cU'=\bigoplus_{h\in H} \cU'_h$ be two group gradings, with supports $S$ and $T$, respectively.
We say that $\Gamma$ and $\Gamma'$ are {\em equivalent} if there exists an isomorphism of algebras $\vphi\colon\cU\to\cU'$ 
and a bijection $\alpha\colon S\to T$ such that $\varphi(\cU_s)=\cU'_{\alpha(s)}$ for all $s\in S$. 
If $G$ and $H$ are universal (abelian) grading groups then $\alpha$ extends to an isomorphism $G\to H$. 
In the case $G=H$, the $G$-gradings $\Gamma$ and $\Gamma'$ are {\em isomorphic} if $\cU$ and $\cU'$ are isomorphic as $G$-graded algebras, 
i.e., if there exists an isomorphism of algebras $\vphi\colon\cU\to\cU'$ such that $\varphi(\cU_g)=\cU'_g$ for all $g\in G$. 

If $\Gamma:\,\cU=\bigoplus_{g\in G} \cU_g$ and $\Gamma':\,\cU=\bigoplus_{h\in H} \cU'_h$ are two gradings on the same algebra, 
with supports $S$ and $T$, respectively, then we will say that $\Gamma'$ is a {\em refinement} of $\Gamma$ (or $\Gamma$ is a {\em coarsening} of $\Gamma'$) 
if for any $t\in T$ there exists (unique) $s\in S$ such that $\cU'_t\subset\cU_s$. 
If, moreover, $\cU'_t\ne\cU_s$ for at least one $t\in T$, then the refinement is said to be {\em proper}. 
A grading $\Gamma$ is said to be {\em fine} if it does not admit any proper refinement.

Given a $G$-grading $\Gamma:\,\cU=\bigoplus_{g\in G} \cU_g$, any group homomorphism $\alpha\colon G\to H$ induces an $H$-grading ${}^\alpha\Gamma$ on $\cU$ 
whose homogeneous component of degree $h$ is the sum of all $\cU_g$ with $\alpha(g)=h$. 
Clearly, ${}^\alpha\Gamma$ is a coarsening of $\Gamma$ (not necessarily proper). 
If $G$ is the universal group of $\Gamma$ then every coarsening of $\Gamma$ is obtained in this way. 
If $\Gamma$ and $\Gamma'$ are two gradings, with universal groups $G$ and $H$, then $\Gamma'$ is equivalent to $\Gamma$ if and only if 
$\Gamma'$ is isomorphic to ${}^\alpha\Gamma$ for some group isomorphism $\alpha\colon G\to H$.

\subsection{Galois descent}

Let $\KK$ be a finite Galois field extension of $\FF$ and let $\cG$ be the Galois group.
For any $\FF$-vector space $V$ (not necessarily of finite dimension), there is an action of $\cG$ on $V\otimes\KK$ defined by
$\sigma\cdot(v\otimes\lambda)=v\otimes\sigma(\lambda)$ for all $v\in V$, $\lambda\in\KK$ and $\sigma\in\cG$.
Conversely, if we have an $\FF$-linear action of $\cG$ on a $\KK$-vector space $W$ satisfying 
$\sigma\cdot(w\lambda)=(\sigma\cdot w)\sigma(\lambda)$ for all $w\in W$, $\lambda\in\KK$ and $\sigma\in\cG$, then the set of fixed points $V := W^\cG$ 
is an $\FF$-form of $W$, i.e., $V$ is an $\FF$-subspace and $V\otimes\KK\simeq W$ by means of the map $v\otimes\lambda\mapsto v\lambda$. 
For infinite Galois field extensions, $\cG$ is a (pro-finite) topological group, and this ``descent lemma'' holds if we consider 
\emph{continuous} $\cG$-actions (see e.g. \cite[Lemma 18.1]{KMRT}).   

The following generalization of finite Galois field extensions is often useful (see e.g. \cite[\S 18.B]{KMRT}). 
An \emph{\'{e}tale algebra} over $\FF$ is a separable commutative associative $\FF$-algebra or, in other words, 
a finite direct product of finite separable field extensions of $\FF$. Let $\LL$ be such an algebra 
and let $G$ be a group. Suppose $G$ acts on $\LL$ by $\FF$-linear automorphisms. 
We say that $\LL$ is \emph{$G$-Galois} if $|G|=\dim_\FF\LL$ (known as the degree of $\LL$) and $\LL^G=\FF$.
The ``descent lemma'' holds for free modules over Galois algebras if we use $G$ (rather than $\Aut_\FF(\LL)$) as
a replacement of the Galois group.
(The action of $G$ on $\LL$ gives an embedding $G\to\Aut_\FF(\LL)$, which is an isomorphism if $\LL$ is a field.)

Let $\KK$ be a Galois field extension of $\FF$ (not necessarily finite) and let $\cG=\Gal(\KK/\FF)$. 
The mapping $V\mapsto V\otimes\KK$ is an equivalence from the category of $\FF$-vector spaces to the category of $\KK$-vector spaces 
equipped with a  continuous $\cG$-action as above: a $\KK$-linear map $f\colon V_1\otimes\KK\to V_2\otimes\KK$ ``descends'' 
(i.e., has the form $f_0\otimes\id$ for some $\FF$-linear map $f_0\colon V_1\to V_2$) if and only if $f$ is $\cG$-equivariant. 

Let $\mathbf{S}$ be a functor defined on $\Alg_\FF$. The composition of the forgetful functor $\Alg_\KK\to\Alg_\FF$ with $\mathbf{S}$
is denoted by $\mathbf{S}_\KK$ (so-called ``restriction'' of $\mathbf{S}$). In particular, let $\mathbf{S}$ be an affine scheme over $\FF$,
i.e., a representable functor from $\Alg_\FF$ to the category of sets, and let $\FF[\mathbf{S}]$ denote its representing object. 
Then $\mathbf{S}_\KK$ is an affine scheme over $\KK$, with representing object $\KK[\mathbf{S}] \bydef \FF[\mathbf{S}]\otimes\KK$.
Recall that affine schemes over $\FF$ form a category, with the set of morphisms $\mathrm{Aff}_\FF(\mathbf{S}_1,\mathbf{S}_2)$ consisting of all 
natural transformations $\theta=(\theta_\cR)_{\cR\in\Alg_\FF}$ from $\mathbf{S}_1$ to $\mathbf{S}_2$, and by Yoneda's Lemma this category is dual
to $\Alg_\FF$, i.e., $\mathrm{Aff}_\FF(\mathbf{S}_1,\mathbf{S}_2)\simeq\Alg_\FF(\FF[\mathbf{S}_2],\FF[\mathbf{S}_1])$. 
The element of $\Alg_\FF(\FF[\mathbf{S}_2],\FF[\mathbf{S}_1])$ (``comorphism'') corresponding to a morphism 
$\theta\in\mathrm{Aff}_\FF(\mathbf{S}_1,\mathbf{S}_2)$ is denoted by $\theta^*$; 
we have $\theta_\cR(f)=f\circ\theta^*$ for all $f\in\mathbf{S}_1(\cR)=\Alg_\FF(\FF[\mathbf{S}_1],\cR)$.  
The mapping $\mathbf{S}\mapsto\mathbf{S}_\KK$ is functorial, and a morphism $\theta\colon(\mathbf{S}_1)_\KK\to(\mathbf{S}_2)_\KK$ ``descends'' 
(i.e., comes from a morphism $\mathbf{S}_1\to\mathbf{S}_2$) if and only if the comorphism $\theta^*\colon\KK[\mathbf{S}_2]\to\KK[\mathbf{S}_1]$ 
is $\cG$-equivariant, or, equivalently, $\theta^*$ is a fixed point of the $\cG$-action on $\Alg_\KK(\KK[\mathbf{S}_2],\KK[\mathbf{S}_1])$
defined by $(\sigma\cdot f)(a)=\sigma\cdot(f(\sigma^{-1}\cdot a))$ for all $a\in\KK[\mathbf{S}_2]$. 
This latter action is continuous if $\cG$ is finite or $\FF[\mathbf{S}_2]$ is finitely generated.
It is transported to $\mathrm{Aff}_\KK((\mathbf{S}_1)_\KK,(\mathbf{S}_2)_\KK)$ through the identification with 
$\Alg_\KK(\KK[\mathbf{S}_2],\KK[\mathbf{S}_1])$, and it is compatible with composition of morphisms.

The above remarks apply to affine group schemes over $\FF$, the only difference being that, for an affine group scheme $\Gs$, 
the representing object $\FF[\Gs]$ is a (commutative) Hopf algebra. For example, if $\Gs=\AAut_\FF(\cU)$ then $\Gs_\KK=\AAut_\KK(\cU\otimes\KK)$. 
Note that the set $\mathrm{Aff}_\FF(\Gs_1,\Gs_2)\simeq\Gs_2(\FF\Gs_1)$ is a group, which contains the set of homomorphisms of affine group schemes 
$\mathrm{Hom}_\FF(\Gs_1,\Gs_2)$. This latter is a subgroup if $\Gs_2$ is abelian. The action of $\cG$ on 
$\mathrm{Aff}_\KK((\Gs_1)_\KK,(\Gs_2)_\KK)$ is by group automorphisms, and $\mathrm{Hom}_\KK((\Gs_1)_\KK,(\Gs_2)_\KK)$
is an invariant subset. In particular, $\cG$ acts on $\Gs_\KK(\KK)\simeq\mathrm{Aff}_\KK(\mathbf{1}_\KK,\Gs_\KK)$, which can be identified with 
the group of $\KK$-points of $\Gs$, $\Gs(\KK)=\Alg_\FF(\FF[\Gs],\KK)$, where $\cG$ acts through its action on $\KK$. 
If $\theta\in\mathrm{Aff}_\KK((\Gs_1)_\KK,(\Gs_2)_\KK)$ is a fixed point of $\cG$ then so is $\theta_\KK\colon\Gs_1(\KK)\to\Gs_2(\KK)$, because the mapping $\theta\mapsto\theta_\KK$ is $\cG$-equivariant (and a group homomorphism). 
The converse holds in the following special case: $\chr{\FF}=0$ and $\KK=\FF_\mathrm{alg}$, because $\Gs_1(\FF_\mathrm{alg})$ separates points of $\FF[\Gs_1]$ 
and hence the mapping $\theta\mapsto\theta_{\FF_\mathrm{alg}}$ is injective.

\subsection{Composition algebras}

Recall that a (finite-dimensional) {\em composition algebra} over a field $\FF$ is an algebra $\cA$ with a nonsingular quadratic form $n$ 
such that $n(xy)=n(x)n(y)$ for all $x,y\in\cA$. It is well known that $\dim\cA$ can only be $1$, $2$, $4$ or $8$. 
The unital composition algebras are called {\em Hurwitz algebras} and can be obtained using the Cayley--Dickson doubling process. 
Hurwitz algebras of dimension $4$ are called \emph{quaternion algebras} and those of dimension $8$ are called {\em octonion}, or {\em Cayley algebras}.
(Cayley algebras are alternative, while Hurwitz algebras of dimension $\le 4$ are associative.) 
A Hurwitz algebra $\cA$ is a division algebra if and only if the norm does not represent $0$, i.e., $n(x)\ne 0$ for all $0\ne x\in\cA$.
Otherwise $\cA$ is said to be \emph{split}. It is well known that, up to isomorphism, there is a unique split Hurwitz algebra in each dimension 
different from $1$. (In dimension $1$, the unique Hurwitz algebra is $\FF$.) 
In particular, over an algebraically closed field, there is only one Hurwitz algebra in each dimension.
If $\FF=\RR$ then there are two Hurwitz algebras in dimensions different from $1$: one division and the other split, namely, $\CC$ and $\CC_s \simeq \RR\times\RR$ 
in dimension $2$, $\HH$ and $\HH_s \simeq M_2(\RR)$ in dimension $4$, and $\OO$ and $\OO_s$ in dimension $8$. 
We will also need another kind of composition algebras.

\begin{df}\label{df:symmetric}
A composition algebra $\cS$, with multiplication $\star$ and norm $n$, is said to be \emph{symmetric} if the polar form of the norm, 
$n(x,y)\bydef n(x+y)-n(x)-n(y)$, is associative:
\[
n(x\star y,z)=n(x,y\star z),
\]
for all $x,y,z\in\cS$.
\end{df}

As a consequence, $\cS$ satisfies the following identities \cite{OkOsb}:
\begin{equation}\label{eq:symmetric_comp_id}
(x\star y)\star x=n(x)y=x\star(y\star x).
\end{equation}

Any Hurwitz algebra gives a symmetric composition algebra with respect to the so-called \emph{para-Hurwitz product}: 
$x\star y=x\bullet y\bydef \bar{x}\bar{y}$, where  $\bar{x}\bydef n(x,1)1-x$ is the standard involution.
Note that, with respect to the para-Hurwitz product, $1$ becomes a \emph{para-unit}, i.e., an idempotent $\veps$ satisfying 
$\veps\bullet x=x\bullet\veps=n(x,\veps)\veps-x$.

From now on, we assume $\chr{\FF}\ne 2$.

\begin{df}\label{df:tri}
Let $\cS$ be a symmetric composition algebra of dimension $8$. Its \emph{triality Lie algebra} is defined as
\[
\tri(\cS,\star,n)=\{(d_1,d_2,d_3)\in\frso(\cS,n)^3\;|\; d_1(x\star y)=d_2(x)\star y+x\star d_3(y)\ \forall x,y\in\cS\}.
\]
This is a Lie algebra with componentwise bracket.
\end{df}

It turns out that this definition is symmetric with respect to cyclic permutations of $(d_1,d_2,d_3)$, 
and each projection determines an isomorphism $\tri(\cS,\star,n)\to\frso(\cS,n)$, so $\tri(\cS,\star,n)$ is a Lie algebra of type $D_4$ 
(see e.g. \cite[\S 5.5, \S 6.1]{EKmon} or \cite[\S 45.A]{KMRT}, but note that in the latter the ordering of triples differs from ours). 
This fact is known as the ``local triality principle''. There is also a ``global triality principle'', as follows.

\begin{df}\label{df:Tri}
Let $\cS$ be a symmetric composition algebra of dimension $8$. Its \emph{triality group} is defined as
\[
\TRI(\cS,\star,n)=\{(f_1,f_2,f_3)\in\Ort(\cS,n)^3\;|\; f_1(x\star y)=f_2(x)\star f_3(y)\ \forall x,y\in\cS\}.
\]
This is an algebraic group with componentwise multiplication, and its tangent algebra is $\tri(\cS,\star,n)$.
\end{df}

It turns out that this definition, too, is symmetric with respect to cyclic permutations of $(f_1,f_2,f_3)$, and $\TRI(\cS,\star,n)$ is isomorphic to $\Spin(\cS,n)$. 
In fact, this isomorphism can be defined at the level of the corresponding group schemes (see \cite[\S 35.C]{KMRT} for details). 
The said cyclic permutations determine outer actions of $A_3$ on $\Spin(\cS,n)$ and its Lie algebra $\frso(\cS,n)$. 
If $\cS$ is a para-Cayley algebra then one can define an outer action of $S_3$ using $(f_1,f_2,f_3)\mapsto(\bar{f}_1,\bar{f}_3,\bar{f}_2)$ 
and $(d_1,d_2,d_3)\mapsto(\bar{d}_1,\bar{d}_3,\bar{d}_2)$ as the action of the transposition $(2,3)$, 
where $\bar{f}$ is defined by $\bar{f}(\bar{x})=\overline{f(x)}$. 
The $S_3$-action on $\Spins(\cS,n)$ passes to the quotient $\PGOs^+(\cS,n)$, and 
this allows us to split the exact sequence \eqref{eq:exact_D4} if $(p,q)=(8,0)$ or $(4,4)$.

\subsection{Cyclic compositions}\label{ss:cyclic_compositions}

A convenient way to ``package'' triples of maps as above is the following concept due to Springer \cite{Springer}. 
Let $\LL$ be a Galois algebra over $\FF$ with respect to the cyclic group of order $3$. 
Fix a generator $\rho$ of this group. For any $\ell\in\LL$, we have the norm $N(\ell)=\ell\rho(\ell)\rho^2(\ell)$, 
the trace $T(\ell)=\ell+\rho(\ell)+\rho^2(\ell)$, and the adjoint $\ell^\sharp=\rho(\ell)\rho^2(\ell)$. 
Our main interest is in the case $\LL=\FF\times\FF\times\FF$ where $\rho(\ell_1,\ell_2,\ell_3)=(\ell_2,\ell_3,\ell_1)$.

\begin{df}\label{df:cyclic_comp}
A {\em cyclic composition} over $(\LL,\rho)$ is a free $\LL$-module $V$ with a nonsingular $\LL$-valued quadratic form $Q$ and 
an $\FF$-bilinear multiplication $(x,y)\mapsto x*y$ that is $\rho$-semilinear in $x$ and $\rho^2$-semilinear in $y$ and satisfies the following identities:
\begin{align*}
Q(x*y)&=\rho(Q(x))\rho^2(Q(y)),\\
b_Q(x*y,z)&=\rho(b_Q(y*z,x))=\rho^2(b_Q(z*x,y)),
\end{align*}
where $b_Q(x,y)\bydef Q(x+y)-Q(x)-Q(y)$ is the polar form of $Q$.
An {\em isomorphism} from $(V,\LL,\rho,*,Q)$ to $(V',\LL',\rho',*',Q')$ is a pair of $\FF$-linear isomorphisms 
$\varphi_0\colon(\LL,\rho)\to(\LL',\rho')$ (i.e., $\varphi_0$ is an isomorphism that satisfies $\varphi_0\rho=\rho'\varphi_0$) and 
$\varphi_1\colon V\to V'$ such that $\varphi_1$ is $\varphi_0$-semilinear, $\varphi_1(x*y)=\varphi_1(x)*'\varphi_1(y)$ and 
$\varphi_0(Q(x))=Q'(\varphi_1(x))$ for all $x,y\in V$.
\end{df}

As a consequence, $V$ also satisfies
\begin{equation}\label{eq:cyclic_comp_id}
(x*y)*x=\rho^2(Q(x))y\quad\text{and}\quad x*(y*x)=\rho(Q(x))y.
\end{equation}

If $(\cS,\star,n)$ is a symmetric composition algebra then $\cS\ot\LL$ becomes a cyclic composition with $Q(x\ot\ell)=n(x)\ell^2$ 
(extended to sums in the obvious way using the polar form of $n$) and $(x\ot\ell)*(y\ot m)=(x\star y)\ot\rho(\ell)\rho^2(m)$. 
With $\LL=\FF\times\FF\times\FF$ and $\rho(\ell_1,\ell_2,\ell_3)=(\ell_2,\ell_3,\ell_1)$, this gives $V=\cS\times\cS\times\cS$ with $Q=(n,n,n)$ and
\begin{equation}\label{df:cyclic_prod}
(x_1,x_2,x_3)*(y_1,y_2,y_3)=(x_2\star y_3,x_3\star y_1,x_1\star y_2).
\end{equation}
For $\cS$ of dimension $8$, the multiplication \eqref{df:cyclic_prod} allows us to interpret $\TRI(\cS,\star,n)$ as $\Aut_\LL(V,*,Q)$, 
the group of $\LL$-linear automorphisms (i.e., with $\varphi_0=\id$), and $\TRI(\cS,\star,n)\rtimes A_3$ as $\Aut_\FF(V,\LL,\rho,*,Q)$, 
the group of all automorphisms (see Definition \ref{df:cyclic_comp}). This interpretation can be made at the level of group schemes:
\[
\AAut_\FF(V,\LL,\rho,*,Q)=\TRIs(\cS,\star,n)\rtimes\mathbf{A}_3\simeq\Spins(\cS,n)\rtimes\mathbf{A}_3.
\]
Similarly, $\tri(\cS,\star,n)$ can be interpreted as $\Der_\LL(V,*,Q) \bydef \{d\in\frso(V,Q)\;|\;d(x*y)=d(x)*y+x*d(y)\ \forall x,y\in V\}$.

\subsection{Twisted compositions}\label{ss:twisted_compositions}

In order to deal with the case $\LL=\RR\times\CC$, we will need the following generalization of cyclic compositions. 
Let $\LL$ be a cubic \'{e}tale algebra over $\FF$, with the norm $N=N_{\LL/\FF}\colon\LL\to\FF$, 
the trace $T=T_{\LL/\FF}\colon\LL\to\FF$, and the adjoint $\sharp\colon\LL\to\LL$.
(The latter is the unique quadratic map satisfying $\ell\ell^{\sharp}=N(\ell)$ for all $\ell\in\LL$.) 
For example, if $\LL=\RR\times\CC$ then, for any $\ell=(b,c)$, we have $N(\ell)=bc\bar{c}$, $T(\ell)=b+c+\bar{c}$, and $\ell^{\sharp}=(c\bar{c},b\bar{c})$.

\begin{df}\label{df:twisted_comp}
A {\em twisted composition} over $\LL$ is a free $\LL$-module $V$ with a nonsingular $\LL$-valued quadratic form $Q$ and 
an $\FF$-quadratic map $\beta\colon V\to V$ that satisfy the following conditions:
\begin{align*}
\beta(\ell v)&=\ell^{\sharp}\beta(v),\\
Q(\beta(v))&=Q(v)^{\sharp},\\
N_V(v) &\bydef b_Q(v,\beta(v)) \in \FF,
\end{align*}
for all $v\in V$ and $\ell\in\LL$.
An {\em isomorphism} from $(V,\LL,\beta,Q)$ to $(V',\LL',\beta',Q')$ is a pair of $\FF$-linear isomorphisms 
$\varphi_0\colon \LL\to\LL'$ and 
$\varphi_1\colon V\to V'$ such that $\varphi_1$ is $\varphi_0$-semilinear, $\varphi_1(\beta(v))=\beta'(\varphi_1(v))$ and 
$\varphi_0(Q(v))=Q'(\varphi_1(v))$ for all $v\in V$.
\end{df}

This concept generalizes cyclic compositions in the following sense (see \cite[Proposition 36.12]{KMRT}): 
if $\LL$ is Galois with respect to the cyclic group of order $3$ and $\rho_i$, $i=1,2$, are the nontrivial elements of this group 
then any cyclic composition $(V,\LL,\rho_i,*_i,Q)$ yields a twisted composition $(V,\LL,\beta,Q)$
by setting $\beta(v) := v *_i v$ for all $v\in V$ and, conversely, for any twisted composition $(V,\LL,\beta,Q)$, there is a unique pair of cyclic compositions
$(V,\LL,\rho_i,*_i,Q)$, $i=1,2$, such that $\beta(v) = v *_i v$ for all $v\in V$. Moreover, these cyclic compositions are the opposites of one another:
$x *_1 y = y *_2 x$ for all $x,y\in V$.

If $(V,\LL,\beta,Q)$ is a twisted composition and $\lambda,\mu\in\LL^\times$, one checks that $\tilde{\beta}(v) := \lambda\beta(v)$ and 
$\tilde{Q}(v) := \mu Q(v)$ define a twisted composition if and only if $\mu=\lambda^\sharp$ (see \cite[Lemma 36.1]{KMRT}). 
We will say that an isomorphism $(V,\LL,\tilde{\beta},\tilde{Q})\to(V',\LL',\beta',Q')$ is a {\em similitude} from $(V,\LL,\beta,Q)$ to $(V',\LL',\beta',Q')$
with {\em parameter} $\lambda$ and {\em multiplier} $\lambda^\sharp$. In particular, for any $\ell\in\LL^\times$, 
the mappings $\varphi_0=\id$ and $\varphi_1(v)=\ell v$ define a similitude from $(V,\LL,\beta,Q)$ to itself with parameter 
$\ell^{-1}\ell^\sharp$ and multiplier $\ell^2$.

It is known (see \cite[Proposition 36.3]{KMRT}) that, for $\LL=\FF\times\FF\times\FF$, any twisted composition is similar 
to the one determined by the cyclic composition $\cS\ot\LL$, with multiplication given by Equation \eqref{df:cyclic_prod}, 
where $(\cS,\star,n)$ is a para-Hurwitz algebra. 
(Using extension of scalars, it follows that, for any $\LL$, the $\LL$-rank of a twisted composition can be $1$, $2$, $4$ or $8$.) 

We will need the following generalization of the construction $\cS\ot\LL$, with $\cS$ para-Hurwitz, to an arbitrary cubic \'{e}tale algebra $\LL$.
Let $\Delta$ be the discriminant of $\LL$ and let $\Sigma\simeq \LL\ot\Delta$ be the $S_3$-Galois closure of $\LL$ (see e.g. \cite[\S 18B, \S 18C]{KMRT}).
By definition, $\Delta$ consists of the fixed points of the subgroup $A_3\subset S_3$ in $\Sigma$, so $\Delta$ is a quadratic \'{e}tale algebra over $\FF$ 
and $\Sigma$ is $A_3$-Galois over $\Delta$. Note that $\Delta\simeq\FF\times\FF$ if $\LL$ admits the structure of a Galois algebra over $A_3$, and $\Delta$ is 
a quadratic field extension of $\FF$ otherwise.
Using the $A_3$-action on $\Sigma$, we can define the structure of cyclic composition on $\cS\otimes\Sigma\simeq\cS\ot\LL\otimes\Delta$ for each of the $3$-cycles $\rho$ in $A_3$, 
and they both determine the same twisted composition $(\cS\otimes\Sigma,\Sigma,\beta,Q)$. We want to descend from $\Sigma$ to $\LL$ and from $\Delta$ to $\FF$.
To this end, let $\iota$ be the generator of the Galois group of $\Delta$ over $\FF$. 
It is the restriction of any of the three transpositions in $S_3$ acting on $\Sigma$.
There are actually three ways to identify $\Sigma$ with $\LL\otimes\Delta$ because, for each transposition, the algebra of fixed points in $\Sigma$ is 
canonically isomorphic to $\LL$. Fix one transposition, say, $(2,3)$, and the corresponding identification. Then $(2,3)$ acts on $\LL\otimes\Delta$ as 
$\id\otimes\iota$. It follows that the involutive operator $\tilde{\iota} := \bar{}\otimes\id\otimes\iota$ on $\cS\otimes\LL\otimes\Delta$ determines a 
Galois descent from $\LL\otimes\Delta$ to $\LL$, so
\begin{equation}\label{eq:twisted_comp}
V := \{x\in\cS\otimes\LL\otimes\Delta\;|\;\tilde{\iota}(x)=x\}
\end{equation}
is an $\LL$-form. Since $\tilde{\iota}(x*y)=\tilde{\iota}(y)*\tilde{\iota}(x)$, the map $\beta(x)=x*x$ restricts to $V$. 
Also, the restriction of $Q$ to $V$ takes values in $\LL$. Thus we obtain a twisted composition $(V,\LL,\beta,Q)$, 
which we will denote by $\TC(\cS,\LL)$. A twisted composition similar to some $\TC(\cS,\LL)$ is said to be a
\emph{twisted Hurwitz composition}. 

It is known (see e.g. \cite[Corollary 36.29]{KMRT}) that if $\LL$ is not a field --- 
in other words, $\LL=\FF\times\KK$ where $\KK$ is a quadratic \'{e}tale algebra over $\FF$ --- then any twisted composition over $\LL$ is a twisted Hurwitz composition. Note that in this case $\Delta\simeq\KK$. Specifically for $\LL=\RR\times\CC$, we have $\Delta\simeq\CC$ and 
$\Sigma\simeq\CC\times\CC\times\CC$, with $S_3$ acting by $\rho\cdot(c_1,c_2,c_3)=(c_{\rho^{-1}(1)},c_{\rho^{-1}(2)},c_{\rho^{-1}(3)})$ for a $3$-cycle $\rho$ 
and $(2,3)\cdot(c_1,c_2,c_3)=(\bar{c}_1,\bar{c}_3,\bar{c}_2)$; 
$\LL$ is embedded in $\CC\times\CC\times\CC$ by $(b,c)\mapsto (b,c,\bar{c})$.

For any twisted composition $(V,\LL,\beta,Q)$ of rank $8$, the affine group scheme $\AAut_\LL(V,\beta,Q)$ of $\LL$-linear automorphisms is a twisted form of 
$\Spins_8$ (which arises when $V=\cS\otimes\LL$ where $\LL=\FF\times\FF\times\FF$ and $\cS$ is the para-Hurwitz algebra associated to the split Cayley algebra) 
and is denoted by $\Spins(V,\LL,\beta,Q)$ (see \cite[\S 36.A]{KMRT}). We have a short exact sequence: 
\begin{equation}\label{eq:exact_twisted_comp}
\xymatrix{
\mathbf{1}\ar[r] & \Spins(V,\LL,\beta,Q)\ar[r] & \AAut_\FF(V,\LL,\beta,Q)\ar[r] & \AAut_\FF(\LL)\ar[r] & \mathbf{1}
}
\end{equation}
The tangent algebra of $\Spins(V,\LL,\beta,Q)$ is 
\[
\Der_\LL(V,\beta,Q) \bydef \{d\in\frso(V,Q)\;|\;d(\beta(x,y))=\beta(d(x),y)+\beta(x,d(y))\ \forall x,y\in V\},
\]
which is a simple Lie algebra of type $D_4$ (see \cite[\S 45.B]{KMRT}). Here $\beta(x,y) \bydef \beta(x+y)-\beta(x)-\beta(y)$.

\subsection{Trialitarian algebras}

The definition is given in \cite[\S 43.A]{KMRT} and requires some preparation. 
First, if $A$ is a central simple (associative) algebra over $\FF$, $\chr{\FF}\ne 2$, 
and $\sigma$ is an orthogonal involution on $A$ then one can define the \emph{Clifford algebra} $\Cl(A,\sigma)$ as the quotient of the tensor algebra of 
the space $A$ modulo certain relations (see \cite[\S 8.B]{KMRT}). If $A=\End_\FF(V)$, where $V$ is a (finite-dimensional) vector space over $\FF$, 
and $\sigma$ is induced by a quadratic form $Q$ on $V$ then $\Cl(A,\sigma)\simeq\Cl_0(V,Q)$, the even part of the well-known Clifford algebra of a quadratic space.
The inclusion of $A$ into its tensor algebra yields a canonical $\FF$-linear map $\kappa\colon A\to\Cl(A,\sigma)$, whose image generates $\Cl(A,\sigma)$, 
but which is neither injective nor a homomorphism of algebras. This construction is functorial for isomorphisms of algebras with involution:  given such an isomorphism $\varphi\colon(A,\sigma)\to(A',\sigma')$, there is a unique isomorphism $\Cl(\varphi)\colon\Cl(A,\sigma)\to\Cl(A',\sigma')$ such that $\kappa'\varphi=\Cl(\varphi)\kappa$.
In particular, $\sigma$ determines a unique involution $\ul{\sigma}=\Cl(\sigma)$ on $\Cl(A,\sigma)$.  
Also, any action (respectively, grading) by a group on the algebra with involution $(A,\sigma)$ gives rise to a unique action (respectively, grading) 
on the algebra with involution $(\Cl(A,\sigma),\ul{\sigma})$ such that $\kappa$ is equivariant (respectively, preserves the degree). 
The isomorphism $\Cl_0(V,Q)\to\Cl(A,\sigma)$ mentioned above in the case $A=\End_\FF(V)$ sends, for any $x,y\in V$, 
the element $x\cdot y\in\Cl_0(V,Q)$ to the image of the operator $z\mapsto xb_Q(y,z)$ under $\kappa$. 
We will also need a generalization of the Clifford algebra construction for a separable associative algebra with involution: 
the center is an \'{e}tale algebra $\LL$ over $\FF$, i.e., the direct product of (finite) separable field extensions of $\FF$, 
and we require that the involution be $\LL$-linear and orthogonal on each factor, so we can apply the above considerations 
separately over each factor.

Next, we introduce an analog of Definitions \ref{df:tri} and \ref{df:Tri} for triples of central simple algebras of 
degree $8$ over $\FF$. Let $(\cS,\bullet,n)$ be the para-Hurwitz algebra associated to the split Cayley algebra over $\FF$. Recall that it gives an $S_3$-action on $\PGOs^+(\cS,n)=\PGOs_8^+$. In particular, $A_3$ acts by cyclic permutations
of the following triples $(\psi,\psi^+,\psi^-)$ of elements of 
$\PGO^+(\cS_\mathrm{sep},n_\mathrm{sep})\subset\Aut_{\FF_\mathrm{sep}}(R_\mathrm{sep},\sigma_\mathrm{sep})$,
where $\FF_\mathrm{sep}$ is a separable closure of $\FF$, $R=\End_\FF(\cS)$, $\sigma$ is the involution of $R$
induced by $n$, and the subscript $\mathrm{sep}$ denotes objects obtained by extending scalars 
from $\FF$ to $\FF_\mathrm{sep}$. Identities \eqref{eq:symmetric_comp_id} imply that the mapping
\[
x\mapsto\begin{pmatrix}0 & r_x \\ l_x & 0\end{pmatrix}\in\End_{\FF}\big(\cS\oplus\cS\big),\;x\in\cS,
\]
where $l_x(y)\bydef x\bullet y\defby r_y(x)$, extends to an isomorphism of $\ZZ_2$-graded algebras with involution
\[
\alpha_\cS\colon(\Cl(\cS,n),\tau)\stackrel{\sim}{\to}(\End_{\FF}\big(\cS\oplus\cS\big),\tilde\sigma),
\]
where $\tau$ is the standard involution of the Clifford algebra and $\tilde\sigma$ is induced by the quadratic form
$n\perp n$ on $\cS\oplus\cS$. Restricting to the even part and using the above isomorphism $\Cl_0(\cS,n)\simeq\Cl(R,\sigma)$,
we obtain the isomorphism
\[
\alpha_R\colon(\Cl(R,\sigma),\ul{\sigma})\stackrel{\sim}{\to}(R,\sigma)\times (R,\sigma).
\]
Given $\psi\in\PGO^+(\cS_\mathrm{sep},n_\mathrm{sep})$, the corresponding 
$\psi^+$ and $\psi^-$ are determined by the condition 
$(\psi^+\times\psi^-)\alpha_{R_\mathrm{sep}}=\alpha_{R_\mathrm{sep}}\Cl(\psi)$.
The $S_3$-action on $\PGO^+(\cS_\mathrm{sep},n_\mathrm{sep})$ yields an $S_3$-action on continuous $1$-cocycles 
$\cG\to\PGO^+(\cS_\mathrm{sep},n_\mathrm{sep})$ where $\cG=\mathrm{Gal}(\FF_\mathrm{sep}/\FF)$. 
Each such $1$-cocycle $\gamma$ twists the standard action of $\cG$ on $(R_\mathrm{sep},\sigma_\mathrm{sep})$ 
and hence determines a twisted form $(A,\sigma_A)$ of $(R,\sigma)$. Similarly, $\gamma^+$ and $\gamma^-$ determine 
twisted forms $(B,\sigma_B)$ and $(C,\sigma_C)$, respectively. This situation can be described in more abstract terms:  
a \emph{related triple of algebras} over $\FF$ is an ordered triple $(A,B,C)$, where $A$, $B$ and $C$ are central simple algebras of degree $8$ with orthogonal involutions $\sigma_A$, $\sigma_B$ and $\sigma_C$, respectively, together with an isomorphism of algebras with involution $\alpha_A\colon (\Cl(A,\sigma_A),\ul{\sigma}_A)\stackrel{\sim}{\to}(B,\sigma_B)\times(C,\sigma_C)$. An \emph{isomorphism} $((A,B,C),\alpha_A)\to((A',B',C'),\alpha_{A'})$ is a triple of isomorphisms 
$\varphi_1\colon(A,\sigma_A)\to(A',\sigma_{A'})$, $\varphi_2\colon(B,\sigma_B)\to(B',\sigma_{B'})$ and 
$\varphi_3\colon(C,\sigma_C)\to(C',\sigma_{C'})$ such that $(\varphi_2\times\varphi_3)\alpha_A=\alpha_A\Cl(\varphi_1)$.
If $A$, $B$ and $C$ are the $\FF$-forms of $R_\mathrm{sep}$ as above, then $\alpha_A$ is just the restriction of 
$\alpha_{R_\mathrm{sep}}$. Conversely, any related triple of algebras is isomorphic to one of this form.
It follows that there is an $A_3$-action on related triples of algebras: the algebras $A$, $B$ and $C$ are permuted 
cyclically and the isomorphism $\alpha_A$ determines analogous isomorphisms 
$\alpha_B\colon (\Cl(B,\sigma_B),\ul{\sigma}_B)\stackrel{\sim}{\to}(C,\sigma_C)\times(A,\sigma_A)$ 
and $\alpha_C\colon (\Cl(C,\sigma_C),\ul{\sigma}_C)\stackrel{\sim}{\to}(A,\sigma_A)\times(B,\sigma_B)$.
Actually, we have an $S_3$-action on related triples of algebras: $(2,3)$ sends $(A,B,C)$ to $(A,C,B)$ and 
$\alpha_A$ to its composition with the flip $B\times C\to C\times B$.

\begin{remark}
Our treatment here differs from \cite[\S 42.A]{KMRT}, where $\alpha_A$ is not fixed and the ordering of triples is different. Thus, our related triple $((A,B,C),\alpha_A)$ corresponds to a \emph{trialitarian triple} $(A,C,B)$ in 
\cite{KMRT}.
\end{remark}

Now consider the following objects: $(E,\LL,\rho,\sigma,\alpha)$ where $\LL$ is a cubic \'{e}tale algebra, $\rho$ is a generator of $A_3$, 
$E$ is an associative algebra with center $\LL$ such that, for each primitive idempotent $\varepsilon\in\LL$, the algebra $\varepsilon E$ is central simple 
of degree $8$ over $\varepsilon\LL$, $\sigma$ is an $\LL$-linear involution of $E$ that is orthogonal on each $\varepsilon E$, and finally 
\[
\xymatrix{
\alpha\colon\big(\Cl(E,\sigma),\ul{\sigma}\big)\ar[r]^\sim & (E\otimes\Delta,\sigma\otimes\id)^\rho
}
\]
is an isomorphism of $\LL$-algebras with involution, where $\Delta$ is the discriminant of $\LL$ and the superscript $\rho$ 
denotes the twist of the $(\LL\otimes\Delta)$-module structure through the action of $\rho$ on $\LL\otimes\Delta$. 
 Recall that we identified $\LL\otimes\Delta$ with the $S_3$-Galois closure of $\LL$ so there is an action of $S_3$ on $\LL\otimes\Delta$ such that $\LL\simeq\LL\otimes 1$ is the set of fixed points of $(2,3)$ and $\Delta\simeq 1\otimes\Delta$ is the set of fixed points of $A_3$. 
An {\em isomorphism} $(E,\LL,\rho,\sigma,\alpha)\to(E',\LL',\rho',\sigma',\alpha')$ is defined to be an isomorphism 
$\varphi\colon (E,\sigma)\to (E',\sigma')$ of $\FF$-algebras with involution such that the following diagram commutes:
\[
\xymatrix{
\Cl(E,\sigma)\ar[r]^\alpha\ar[d]_{\Cl(\varphi)} & (E\otimes\Delta)^\rho\ar[d]^{\varphi\ot\delta}\\
\Cl(E',\sigma')\ar[r]^{\alpha'} & (E'\otimes\Delta')^{\rho'}
}
\]
where $\delta\colon\Delta\to\Delta'$ is the isomorphism induced by the restriction $\varphi\colon\LL\to\LL'$.

If $\LL=\FF\times\FF\times\FF$ then $\Delta=\FF\times\FF$ and $\LL\otimes\Delta\simeq\LL\times\LL$
where the action of $(2,3)$ on $\LL\otimes\Delta$ interchanges the two components and hence  
$\rho$ acts on them in opposite ways; we order the components so that $\rho$ acts as itself on the second and as $\rho^2$ on the first.
Consequently, $(E\otimes\Delta)^\rho\simeq E^{\rho^2}\times E^{\rho}$. 
If we write $E=E_1\times E_2\times E_3$ and $\sigma=\sigma_1\times\sigma_2\times\sigma_3$ then, say, for $\rho=(1,3,2)$, 
the isomorphism $\alpha$ can be regarded as a triple of isomorphisms:
$\alpha_i\colon(\Cl(E_i,\sigma_i),\ul{\sigma}_i)\to(E_{i+1},\sigma_{i+1})\times(E_{i+2},\sigma_{i+2})$, with subscripts modulo $3$.
These isomorphisms make $(E_1,E_2,E_3)$, $(E_2,E_3,E_1)$ and $(E_3,E_1,E_2)$ related triples of algebras.
We say that $(E,\FF\times\FF\times\FF,\rho,\sigma,\alpha)$  is a \emph{trialitarian algebra} if these 
triples form an $A_3$-orbit.

Note that this condition is not affected by extension of scalars. For an arbitrary $\LL$, we declare $(E,\LL,\rho,\sigma,\alpha)$ a \emph{trialitarian algebra}
if it takes this form upon extension of scalars to some (and hence any) field $\widetilde{\FF}$ that splits $\LL$, i.e., 
$\LL\otimes\widetilde{\FF}\simeq\widetilde{\FF}\times\widetilde{\FF}\times\widetilde{\FF}$.
  
For any trialitarian algebra, the automorphism group scheme $\AAut_\FF(E,\LL,\sigma,\alpha)$ does not depend on the choice of $\rho$. 
The subgroupscheme of $\LL$-linear automorphisms is a twisted form of $\PGOs_8^+$ and is denoted by $\PGOs^+(E,\LL,\sigma,\alpha)$
(see \cite[\S 44.A]{KMRT}). We have a short exact sequence:
\begin{equation}\label{eq:exact_trialitarian}
\xymatrix{
\mathbf{1}\ar[r] & \PGOs^+(E,\LL,\sigma,\alpha)\ar[r] & \AAut_\FF(E,\LL,\sigma,\alpha)\ar[r]^-{\pi} & 
\AAut_\FF(\LL)\ar[r] & \mathbf{1}
}
\end{equation}
(compare with \eqref{eq:exact_D4}). If $\LL=\FF\times\KK$ where $\KK$ is a quadratic \'{etale} algebra over $\FF$ then 
$E\simeq A\times\Cl(A,\sigma_A)$ and the restriction to the first factor is a closed embedding $\PGOs^+(E,\LL,\sigma,\alpha)\to\AAut_\FF(A,\sigma_A)$ 
whose image is the connected component of $\AAut_\FF(A,\sigma_A)$.

\subsection{The trialitarian algebra $\End_\LL(V)$}

Let $(V,\LL,\rho,*,Q)$ be a cyclic composition of rank $8$. Consider $E=\End_\LL(V)$, with involution $\sigma$ determined by the quadratic form $Q$. 
Since $\LL$ is Galois, we have $\Delta=\FF\times\FF$ and we can identify $(E\otimes\Delta)^\rho$ with 
 $E^{\rho^2}\times E^{\rho}$.

\begin{df}\label{df:tri_alg}
The multiplication $*$ of $V$ allows us to define the remaining part of the structure of trialitarian algebra on $E$, namely, 
the isomorphism of $\LL$-algebras with involution
$
\alpha\colon\Cl(E,\sigma)\stackrel{\sim}{\to}E^{\rho^2}\times E^{\rho},
$
by using the Clifford algebra $\Cl(V,Q)$ as follows. Identities \eqref{eq:cyclic_comp_id} imply that the mapping
\[
x\mapsto\begin{pmatrix}0 & r_x \\ l_x & 0\end{pmatrix}\in\End_{\LL}\big(V^{\rho^2}\oplus V^{\rho}\big),\;x\in V,
\]
where $l_x(y)\bydef x*y\defby r_y(x)$, extends to an isomorphism of $\ZZ_2$-graded algebras with involution
\[
\alpha_V\colon(\Cl(V,Q),\tau)\stackrel{\sim}{\to}(\End_{\LL}\big(V^{\rho^2}\oplus V^{\rho}\big),\tilde\sigma),
\]
where $\tau$ is the standard involution of the Clifford algebra and $\tilde\sigma$ is induced by the quadratic form 
$Q^{\rho^2}\perp Q^{\rho}$ on $V^{\rho^2}\oplus V^{\rho}$, with $Q^\rho(x)\bydef\rho^{-1}(Q(x))$. 
Then $\alpha$ is obtained by restricting $\alpha_V$ to the even part $\Cl_0(V,Q)$ and identifying $\Cl(E,\sigma)$ with $\Cl_0(V,Q)$. 
Explicitly,
\[
\alpha\colon \kappa\bigl(xb_Q(y,\cdot)\bigr)\mapsto(r_x l_y, l_x r_y),\;x,y\in V.
\]
\end{df}

More generally, if $(V,\LL,\beta,Q)$ is a twisted composition where $\LL$ is not necessarily Galois then, for a fixed $3$-cycle $\rho$, one can consider 
the extension of scalars from $\FF$ to $\Delta$ and use the corresponding unique cyclic composition $(V\otimes\Delta,\LL\otimes\Delta,\rho,*,Q\otimes\id)$ 
to construct $\alpha$ that makes $E=\End_\LL(V)$ a trialitarian algebra (see \cite[Proposition 36.19]{KMRT}). 
The trialitarian algebras of this form are characterized by the property that the class $[E]$ in the Brauer group of $\LL$ is trivial 
(see \cite[Proposition 44.16]{KMRT}).

For $E=\End_\LL(V)$, let $\Gs$ be the group scheme of invertible $\FF$-linear endomorphisms of $V$ that are semilinear over $\LL$ 
and consider the homomorphism $\mathrm{Int}\colon\Gs\to\AAut_\FF(E)$ where $\mathrm{Int}_\cR(a)$ is the conjugation by $a\in\Gs(\cR)$ 
on $E_\cR=\End_{\LL\ot\cR}(V\ot\cR)$, for all $\cR\in\Alg_\FF$. 
Then $\mathrm{Int}$ restricts to a quotient map $\AAut_\FF(V,\LL,\beta,Q)\to \AAut_\FF(E,\LL,\sigma,\alpha)$. 
In fact, we obtain a morphism of short exact sequences from \eqref{eq:exact_twisted_comp} to \eqref{eq:exact_trialitarian},
which is the identity on $\AAut_\FF(\LL)$ and the quotient modulo the center on $\Spins(V,\LL,\beta,Q)$.

\subsection{Trialitarian algebras and simple Lie algebras of type $D_4$}\label{ss:trialitarian_D4}

As mentioned in the Introduction, there is a fundamental connection between trialitarian algebras and simple Lie algebras of type $D_4$ (see \cite[\S 45]{KMRT}). 
For any trialitarian algebra, the canonical map $\kappa$ yields an injective homomorphism 
$\frac12\kappa\colon\mathrm{Skew}(E,\sigma)\to\mathrm{Skew}(\Cl(E,\sigma),\ul{\sigma})$ 
of Lie algebras over $\LL$, and the $\FF$-subspace
\[
\cL(E,\LL,\rho,\sigma,\alpha)\bydef\{x\in\mathrm{Skew}(E,\sigma)\;|\;\alpha(\kappa(x))=2x\otimes 1\}
\]
is a simple Lie algebra of type $D_4$. For $E=\End_\LL(V)$, this Lie algebra is equal to $\Der_\LL(V,\beta,Q)$.  

The restriction from $E$ to $\cL(E)\bydef \cL(E,\LL,\rho,\sigma,\alpha)$ yields an isomorphism of affine group schemes
$\mathrm{Res}\colon\AAut_\FF(E,\LL,\sigma,\alpha)\stackrel{\sim}{\to}\AAut_\FF(\cL(E))$.
Note that, if $E=\End_\LL(V)$ then the composition $\mathrm{Res}\circ\mathrm{Int}$ is the adjoint representation 
$
\Ad\colon\AAut_\FF(V,\LL,\beta,Q)\to\AAut_\FF(\Der_\LL(V,\beta,Q)). 
$

%
%
\section{Gradings on Cayley algebras and simple Lie algebras of type $G_2$}\label{s:octonions_G2}

Gradings on the two Cayley algebras $\OO$ and $\OO_s$ over $\RR$ will be instrumental to classify Type III gradings on the simple Lie algebras of type $D_4$. Over algebraically closed fields, gradings on Cayley algebras have been classified up to isomorphism in \cite{Eld98} and \cite[\S 4.2]{EKmon}. In this section, this will be extended to arbitrary fields and, as a particular case, the situation over $\RR$ (or any real closed field) will be given explicitly.

Recall the \emph{Cayley--Dickson doubling process:} given an associative Hurwitz algebra $\cQ$ over a field $\FF$ with norm $n$ such that its polar form is nondegenerate (which excludes the ground field if its characteristic is $2$) and any nonzero scalar $\alpha\in\FF^\times$, let $\cC$ be the direct sum  of two copies of $\cQ$, where we formally write the element 
$(x,y)$ as $x+yu$, so $\cC=\cQ\oplus \cQ u$. Then $\cC$ is a Hurwitz algebra with multiplication and norm given by:
\[
\begin{split}
&(a+bu)(c+du)=(ac+\alpha\overline{d}b)+(da+b\overline{c})u,\\
&n\bigl(a+bu\bigr)=n(a)-\alpha n(b).
\end{split}
\]
The new algebra thus obtained will be denoted by $\CD(\cQ,\alpha)$. For example, the classical real algebras of complex numbers, quaternions, and octonions, are obtained as $\CC=\CD(\RR,-1)$, $\HH=\CD(\CC,-1)$, and $\OO=\CD(\HH,-1)$. We will write $\CD(\cA,\alpha,\beta)\bydef \CD\bigl(\CD(\cA,\alpha),\beta\bigr)$ and similarly for $\CD(\cA,\alpha,\beta,\gamma)$, so, for instance, we have $\OO=\CD(\RR,-1,-1,-1)$.

Conversely, given any Hurwitz algebra $\cC$ with norm $n$ and a subalgebra $\cQ$ such that the restriction to $\cQ$ of the polar form of $n$ is nondegenerate, and given any nonisotropic element $u\in \cQ^\perp$, it follows that $n\bigl(\cQ,\cQ u\bigr)=0$ and that $\cQ\oplus \cQ u$ is a subalgebra of $\cC$ isomorphic to $\CD(\cQ,\alpha)$ with $\alpha=-n(u)=u^2$.

Now let $\cQ$ be a quaternion subalgebra of a Cayley algebra $\cC $ over a field $\FF$, and let $u\in \cQ^\perp$ with $n(u)=-\gamma\neq 0$. Then $\cC =\cQ\oplus \cQ u$ is isomorphic to $\CD(\cQ ,\gamma)$, and this gives a $\ZZ_2$-grading on $\cC$ with $\cC_{\bar 0}=\cQ $ and $\cC_{\bar 1}=\cQ^\perp=\cQ u$.

The quaternion subalgebra $\cQ $ can be obtained in turn from a quadratic \'etale subalgebra $\cK$ as $\cQ =\cK\oplus \cK v\simeq\CD(\cK,\beta)$, with $v\in \cQ \cap \cK^\perp$, $n(v)=-\beta\neq 0$. Then $\cC $ is isomorphic to $\CD(\cK,\beta,\gamma)$, and this gives a $\ZZ_2^2$-grading on $\cC $ with $\cC _{(\bar 0,\bar 0)}=\cK$, $\cC _{(\bar 1,\bar 0)}=\cK v$, $\cC _{(\bar 0,\bar 1)}=\cK u$, $\cC _{(\bar 1,\bar 1)}=(\cK v)u=\cK(uv)$.

If $\chr\FF\neq 2$, then $\cK$ can be obtained in turn from the ground field $\FF$: $\cK=\FF\oplus\FF w\simeq \CD(\FF,\alpha)$, with $w\in \cK\cap\FF^\perp$, $n(w)=-\alpha\neq 0$, so $\cC =\CD(\FF,\alpha,\beta,\gamma)$, and this gives a $\ZZ_2^3$-grading on $\cC $.

These gradings by $\ZZ_2^r$, $r=1,2,3$, are called the \emph{gradings induced by the Cayley--Dickson doubling process} \cite[p.~131]{EKmon}. The groups $\ZZ_2^r$ are the universal grading groups.

In a different vein, on the split Cayley algebra $\cC _s$ over a field $\FF$, there is a fine grading, called \emph{Cartan grading}, whose homogeneous components are the eigenspaces for the action of a maximal torus of $\AAut_\FF(\cC _s)$. Its universal grading group is $\ZZ^2$.

\begin{theorem}[\cite{Eld98},{\cite[Theorem 4.12]{EKmon}}]
Any nontrivial grading on a Cayley algebra is, up to equivalence, either a grading induced by the Cayley--Dickson doubling process or a coarsening of the Cartan grading on the split Cayley algebra. \qed
\end{theorem}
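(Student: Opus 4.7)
Since every grading is equivalent to a coarsening of a fine grading, my plan is to classify the fine gradings $\Gamma\colon\cC=\bigoplus_{g\in G}\cC_g$ with universal abelian group $G$, and then read off the general statement. The basic observation is that $1\in\cC_e$ and distinct homogeneous components are orthogonal under the polar form of $n$ unless their degrees multiply to $e$; hence, for any $x\in\cC_g$ with $g\neq e$, we have $t(x)=n(x,1)=0$, so $\bar x=-x$ and $x^2=-n(x)\,1$. This single identity organises the whole argument.

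I would then distinguish two cases according to whether some $\cC_g$ with $g\neq e$ contains a non-isotropic element. In the affirmative case, any such $x$ satisfies $x^2=-n(x)\,1\in\cC_e$, so $g$ has order $2$ in $G$ and $\cK_1\bydef\FF 1\oplus\FF x\cong\CD(\FF,-n(x))$ is a graded quadratic étale subalgebra. Its orthogonal complement is again graded---$\cK_1^\perp$ is the sum of the $\cC_h$ for $h\notin\{e,g\}$ together with $\cC_e\cap 1^\perp$ and $\cC_g\cap x^\perp$---so I would iterate the Cayley--Dickson search inside $\cK_1^\perp$, obtaining at the next stage a graded quaternion subalgebra $\cQ\cong\CD(\FF,\alpha,\beta)$ and, if the iteration continues, the full decomposition $\cC\cong\CD(\FF,\alpha,\beta,\gamma)$ with its $\ZZ_2^3$-grading. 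Since this $\ZZ_2^3$-grading is already fine and $\Gamma$ refines it, fineness forces $\Gamma$ to be equivalent to a Cayley--Dickson grading of length at most $3$; coarsenings then account for the remaining $\ZZ_2^r$-gradings in the statement.

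In the complementary case, every nonzero homogeneous element of non-identity degree is isotropic; in particular, $\cC$ has isotropic vectors and must be the split Cayley algebra $\cC_s$. Here I would combine the classification over an algebraic closure \cite{Eld98,EKmon} with the Galois-descent machinery reviewed in Section \ref{s:preliminaries}. Over $\Falg$, the extended grading $\Gamma_\Falg$ on $\cC_s\otimes\Falg$ is a coarsening of the Cartan grading, so the base change of the classifying morphism $\eta_\Gamma\colon G^D\to\AAut_\FF(\cC_s)$ lies in a maximal torus of $\AAut_\Falg(\cC_s\otimes\Falg)$. Since $\cC_s$ is $\FF$-split, $\AAut_\FF(\cC_s)$ admits an $\FF$-split maximal torus whose weight decomposition of $\cC_s$ is precisely the Cartan grading over $\FF$; conjugacy of $\FF$-split maximal tori in a split reductive group then lets me conjugate $\im\eta_\Gamma$ inside this torus, realising $\Gamma$ as a coarsening of the Cartan grading over $\FF$.

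The main obstacle I foresee is the transition between the two cases: one must rule out fine gradings in which the Cayley--Dickson chain stalls at some length $r<3$ while the orthogonal complement of the partial doubling still carries nonzero (necessarily totally isotropic) homogeneous components of non-identity degree. Fineness, together with the module structure of $\cC$ over the partial doubling subalgebra and the identity $x^2=-n(x)\,1$, should force either the chain to extend or the residual homogeneous pieces to coalesce into a Cartan-type pattern. Over an arbitrary field, where quadratic and quaternion subalgebras may fail to split in various ways, making this dichotomy precise is where the real technical work lies.
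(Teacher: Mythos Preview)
The paper does not prove this theorem; it is quoted from \cite{Eld98} and \cite[Theorem~4.12]{EKmon} with a \qed. However, the remark immediately following it reveals the strategy used in those references: the case split is governed by the identity component $\cC_e$, which is always a Hurwitz subalgebra. If $\cC_e$ contains a pair of nontrivial orthogonal idempotents (i.e., $\cC_e$ is a split Hurwitz algebra of dimension $\ge 2$), one uses the Peirce decomposition with respect to these idempotents to exhibit the grading as a coarsening of the Cartan grading directly over~$\FF$. If $\cC_e$ is a Hurwitz division algebra, one performs Cayley--Dickson doubling starting from $\cC_e$ itself, choosing at each step a homogeneous anisotropic element in the (graded) orthogonal complement.

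Your dichotomy is organised differently --- around whether some $\cC_g$ with $g\ne e$ contains an anisotropic vector --- and this is precisely why you run into the ``stall'' problem you flag at the end. With your case split, after one doubling step you may find yourself with $\cK_1^\perp$ whose non-identity homogeneous parts are all isotropic while $\cC_e\cap 1^\perp$ is nonzero; you then have to mix Cayley--Dickson steps using degree-$e$ elements with steps using non-identity degrees, and the bookkeeping to show that the resulting grading coincides with the original fine $\Gamma$ becomes awkward. The reference's dichotomy avoids this entirely: once $\cC_e$ is known to be a division Hurwitz algebra, the restriction of $n$ to $\cC_e^\perp$ is nondegenerate, and one can always find an anisotropic homogeneous element there (if all homogeneous elements of $\cC_e^\perp$ were isotropic, the polar form would vanish on each $\cC_g\times\cC_{g^{-1}}$, contradicting nondegeneracy).

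Your Case~2 descent argument also has a gap. Conjugacy of $\FF$-split maximal tori in a split reductive group lets you move split \emph{tori} around, but the image of $\eta_\Gamma$ is an arbitrary diagonalizable subgroupscheme of $\AAut_\FF(\cC_s)$ and need not be a torus (think $G=\ZZ_2^2$, where $G^D=\boldsymbol{\mu}_2^2$). Knowing that over $\Falg$ this image sits inside a maximal torus does not, by itself, let you conjugate it into the fixed split maximal torus over~$\FF$; you would need a statement about conjugacy of diagonalizable subgroupschemes, which is more delicate. The idempotent argument used in \cite{EKmon} sidesteps this by working concretely inside $\cC_s$ rather than through the automorphism group scheme.
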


Moreover, the proof of \cite[Theorem 4.12]{EKmon} shows that if $\cC =\bigoplus_{g\in G}\cC _g$ is a grading on a Cayley algebra $\cC $ and if $\cC _e$ is a split Hurwitz algebra of dimension $\geq 2$, then $\cC _e$ contains two nontrivial orthogonal idempotents, and the grading is, up to equivalence, a coarsening of the Cartan grading. Thus we may strengthen  the previous result as follows:

\begin{theorem}\label{th:CD_or_Cartan}
Any nontrivial grading on a Cayley algebra is, up to equivalence, either a grading induced by the Cayley--Dickson doubling process starting with a Hurwitz division subalgebra, or a coarsening of the Cartan grading on the split Cayley algebra. \qed
\end{theorem}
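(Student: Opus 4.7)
The plan is to deduce Theorem \ref{th:CD_or_Cartan} from the previous theorem together with the sharper information extracted from its proof and quoted in the paragraph just before the statement. Let $\Gamma\colon\cC=\bigoplus_{g\in G}\cC_g$ be a nontrivial grading on a Cayley algebra. By the preceding theorem, $\Gamma$ is equivalent to either a grading induced by the Cayley--Dickson doubling process or a coarsening of the Cartan grading on the split Cayley algebra. The second alternative already matches the conclusion of the statement, so I may assume that $\Gamma$ is equivalent to a $\ZZ_2^r$-grading coming from some doubling chain ending at $\cC$, for some $r\in\{1,2,3\}$, whose starting Hurwitz subalgebra (namely $\FF$, $\cK$ or $\cQ$, depending on $r$) can be read off as $\cC_e$ from the explicit formulas listed in the preceding discussion.

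I would then split into cases according to the nature of $\cC_e$. If $\cC_e$ is a Hurwitz division algebra --- which is automatic when $r=3$ since there $\cC_e=\FF$ --- then the chain already begins with a Hurwitz division subalgebra and we are done. Otherwise, $\cC_e$ is a split Hurwitz algebra of dimension $2$ or $4$. In either subcase it contains two nontrivial orthogonal idempotents: the two primitive idempotents of $\FF\times\FF$ when $\dim\cC_e=2$, and a pair of orthogonal rank-one matrix idempotents when $\cC_e\simeq M_2(\FF)$. Invoking the observation quoted from the proof of the previous theorem --- that whenever $\cC_e$ is a split Hurwitz algebra of dimension $\geq 2$ the grading must be equivalent to a coarsening of the Cartan grading on the split Cayley algebra --- I obtain that $\Gamma$ itself also falls under the second alternative of the statement.

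There is no real obstacle here, because the argument is essentially a repackaging of the previous theorem using the strengthened version of its proof. The only point that must be verified carefully is the identification of $\cC_e$ with the starting subalgebra of the chain for each $r\in\{1,2,3\}$, which is immediate from the explicit descriptions $\cC_{\bar 0}=\cQ$, $\cC_{(\bar 0,\bar 0)}=\cK$, and $\cC_{(\bar 0,\bar 0,\bar 0)}=\FF$ displayed in the text; this ensures that the dichotomy ``division versus split'' applied to $\cC_e$ correctly distinguishes the two desired outcomes.
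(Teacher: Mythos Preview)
Your argument is correct and is essentially identical to the paper's: the paper has no displayed proof beyond the \qed, and the reasoning it intends is precisely the observation in the preceding paragraph, which you have faithfully unpacked into the case analysis on whether the starting subalgebra $\cC_e$ of the Cayley--Dickson chain is a division algebra or a split Hurwitz algebra of dimension $\geq 2$.
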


In order to state the classification of $G$-gradings, up to isomorphism, on Cayley algebras over an arbitrary field $\FF$, we need some notation. Let $\cC_s$ be the split Cayley algebra and consider its Cartan grading as described in \cite[p.~131]{EKmon}. There is a \emph{good basis} $\{e_1,e_2,u_1,u_2,u_3,v_1,v_2,v_3\}$ of $\cC _s$ with
\begin{gather*}
e_i^2=e_i,\quad e_1u_j=u_je_2=u_j,\quad e_2v_j=v_je_1=v_j,\\
u_ju_k=\veps_{jkl} v_l,\quad v_jv_k=\veps_{jkl} u_l,\\
u_jv_j=-e_1,\quad v_ju_j=-e_2,
\end{gather*}
for any $i=1,2$, $j,k,l=1,2,3$, and all other products equal to $0$, where $\veps_{jkl}$ is the Levi-Civita symbol ($\veps_{jkl}=0$ if two indices are equal, otherwise $\veps_{jkl}$ is the sign of the permutation $1\mapsto j$, $2\mapsto k$, $3\mapsto l$). The Cartan grading is then the $\ZZ^2$-grading on $\cC _s$ with $\deg(e_1)=\deg(e_2)=(0,0)$ and
\begin{align*}
\deg(u_1)&=(1,0)=-\deg(v_1),\\ 
\deg(u_2)&=(0,1)=-\deg(v_2),\\ 
-\deg(u_3)&=(1,1)=\deg(v_3).
\end{align*}

Consider the following gradings on Cayley algebras:
\begin{itemize}
\item Let $\gamma=(g_1,g_2,g_3)$ be a triple of elements in $G$ with $g_1g_2g_3=e$. Denote by $\Gamma_{\cC_s}(G,\gamma)$  the $G$-grading on $\cC_s$ induced from the Cartan grading by the homomorphism $\ZZ^2\rightarrow G$ sending $(1,0)\mapsto g_1$ and $(0,1)\mapsto g_2$. For two such triples $\gamma$ and $\gamma'$, we write $\gamma \sim\gamma'$ if there exists a permutation $\pi\in\textup{Sym}(3)$ such that either $g_i'=g_{\pi(i)}$ for all $i=1,2,3$, or $g_i'=g_{\pi(i)}^{-1}$ for all $i=1,2,3$. Note that if $\gamma=(e,e,e)$, we obtain the trivial grading.

\item Let $\cQ $ be a quaternion division subalgebra of $\cC$ and let $T=\langle t\rangle$ be a subgroup of order $2$ in $G$. Denote by $\Gamma_\cC(G,\cQ ,T)$ the $G$-grading on $\cC$ with $\cC_e=\cQ $ and $\cC_t=\cQ ^\perp$. Up to equivalence, this is a grading by $\ZZ_2$ induced by the Cayley--Dickson doubling process.

\item Let $\cK$ be a quadratic separable subfield of $\cC$ and let $T$ be an elementary abelian subgroup of order $4$ in $G$. Then $\cC$ is obtained by the Cayley--Dickson doubling process as $\cC=\CD(\cK,\alpha,\beta)=\cK\oplus \cK u\oplus \cK v\oplus \cK(uv)$ with $n(u)=-\alpha$, $n(v)=-\beta$. Let $t_1,t_2$ be generators of $T$ and consider the $G$-grading on $\cC$ with $\cC_e=\cK$, $\cC_{t_1}=\cK u$, $\cC_{t_2}=\cK v$, $\cC_{t_1t_2}=\cK(uv)$.

Unlike in the previous case, here we have some freedom in choosing $\alpha=-n(u)$ and $\beta=-n(v)$. More precisely, for any nonisotropic $x\in \cC$, define $\mu(x):=n(x)n(\cK^\times)$, which is an element in the quotient group $\FF^\times/n(\cK^\times)$. Then $\mu(x)$ is trivial for any $x\in \cK^\times$, while $\mu(x)\in -\alpha n(\cK^\times)$ for any $x\in \cK u$, $\mu(x)\in -\beta n(\cK^\times)$ for any $x\in \cK v$, and $\mu(x)\in \alpha\beta n(\cK^\perp)$ for any $x\in \cK(uv)$. Thus, we obtain a well-defined map $T\rightarrow \FF^\times/n(\cK^\times)$, which we also denote by $\mu$, such that $\mu(t)=\mu(x)$ for any $x\in \cC_t$, and this map is a group homomorphism because the norm is multiplicative.
The $G$-grading above depends, up to isomorphism, only on the subalgebra $\cK$, the subgroup $T$, and the homomorphism $\mu\colon T\rightarrow \FF^\times/n(\cK^\times)$; it will be denoted by $\Gamma_\cC(G,\cK,T,\mu)$.

Note that the homomorphism $\mu$ is not arbitrary: if $T=\{e,t_1,t_2,t_3\}$ and if $\tilde\mu_i\in \FF^\times$ denotes a representative of $\mu(t_i)$, then the norm $n$ must be isometric to the orthogonal sum $n_\cK\perp \tilde\mu_1 n_\cK\perp \tilde\mu_2 n_\cK\perp\tilde\mu_3 n_\cK$, where $n_\cK$ denotes the restriction of $n$ to $\cK$. Such homomorphisms will be called \emph{admissible}.

\item If $\chr\FF\neq 2$, $\cC$ can be obtained by the Cayley--Dickson doubling process as $\cC=\CD(\FF,\alpha,\beta,\gamma)=\FF\oplus \FF u\oplus\FF v\oplus \FF uv\oplus \FF w\oplus \FF uw\oplus \FF vw\oplus\FF (uv)w$, where $n(u)=-\alpha$, $n(v)=-\beta$ and $n(w)=-\gamma$. Let $T$ be an elementary abelian subgroup of order $8$ in $G$ and let $t_1,t_2,t_3$ be generators of $T$. Consider the $G$-grading determined by $\deg(u)=t_1$, $\deg(v)=t_2$, and $\deg(w)=t_3$. 

Similarly to the previous case, we obtain a group homomorphism $\mu\colon T\rightarrow \FF^\times/(\FF^\times)^2$ given by $\mu(t)=\mu(x)(\FF^\times)^2$ for any $x\in \cC_t$. The $G$-grading above depends, up to isomorphism, only on $T$ and $\mu$; it will be denoted by $\Gamma_\cC(G,\FF,T,\mu)$. 

Again, $\mu$ is not an arbitrary homomorphism: if $T=\{e,t_i:1\leq i\leq 7\}$ and if $\tilde\mu_i\in\FF^\times$ denotes a representative of $\mu(t_i)$, then the norm $n$ must be isometric to the diagonal quadratic form  $\diag(1,\tilde\mu_1,\ldots,\tilde\mu_7)$. Such homomorphisms will be called \emph{admissible}.
\end{itemize}

The next result extends \cite[Theorem 4.21]{EKmon} to arbitrary fields.

\begin{theorem}\label{th:gradings_Cayley}
Let $\cC$ be a Cayley algebra over a field $\FF$ and let $G$ be an abelian group. Then any nontrivial $G$-grading on $\cC$ is isomorphic to one of the following:
\begin{itemize}
\item $\Gamma_{\cC_s}(G,\gamma)$ for a triple $\gamma\in G^3$, $\gamma\neq (e,e,e)$, if $\cC$ is the split Cayley algebra;

\item $\Gamma_\cC(G,\cQ ,T)$ for a quaternion division subalgebra $\cQ $ of $\cC$ and a subgroup $T$ of order $2$ in $G$;

\item $\Gamma_\cC(G,\cK,T,\mu)$ for a quadratic separable subfield $\cK$ of $\cC$, an elementary abelian subgroup $T$ of order $4$ in $G$, and an admissible group homomorphism $\mu\colon T\rightarrow \FF^\times/n(\cK^\times)$;

\item $\Gamma_\cC(G,\FF,T,\mu)$ for an elementary abelian subgroup $T$ of order $8$ in $G$ and a admissible group homomorphism $\mu\colon T\rightarrow \FF^\times/(\FF^\times)^2$, if $\chr\FF\neq 2$.
\end{itemize}
Two $G$-gradings in different items are not isomorphic, and
\begin{itemize}
\item $\Gamma_{\cC_s}(G,\gamma)$ is isomorphic to $\Gamma_{\cC_s}(G,\gamma')$ if and only if $\gamma\sim\gamma'$;

\item $\Gamma_\cC(G,\cQ ,T)$ is isomorphic to $\Gamma_\cC(G,\cQ ',T')$ if and only if $\cQ $ is isomorphic to $\cQ '$ and $T=T'$;

\item $\Gamma_\cC(G,\cK,T,\mu)$ is isomorphic to $\Gamma_\cC(G,\cK',T',\mu')$ if and only if $\cK$ is isomorphic to $\cK'$ and  $(T,\mu)=(T',\mu')$;

\item $\Gamma_\cC(G,\FF,T,\mu)$ is isomorphic to $\Gamma_\cC(G,\FF,T',\mu')$ if and only $(T,\mu)=(T',\mu')$.
\end{itemize}
\end{theorem}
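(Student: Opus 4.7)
The plan is to leverage Theorem \ref{th:CD_or_Cartan}, which reduces the classification to two cases: coarsenings of the Cartan grading on $\cC_s$, and gradings arising from the Cayley--Dickson doubling process applied to a Hurwitz division subalgebra $\cD$. The possible dimensions of $\cD$ are $4$, $2$, and $1$ (the last only if $\chr\FF\neq 2$), yielding $\ZZ_2^r$-gradings with $r=1,2,3$ that correspond to the three CD families in the statement. The Cartan case gives the family $\Gamma_{\cC_s}(G,\gamma)$, since a homomorphism $\ZZ^2\to G$ is determined by the images $g_1,g_2$ of the generators, with $g_3=(g_1g_2)^{-1}$; this establishes existence up to equivalence, after which one must pass to isomorphism by fixing a model in each family.

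To separate the four families, the key invariant is the identity component $\cC_e$: in the Cartan case it contains the orthogonal idempotents $e_1,e_2$ and so is not a division algebra, whereas in the three CD cases it is a Hurwitz division subalgebra of dimension $4$, $2$, or $1$, respectively. Within the Cartan family, a $G$-graded isomorphism must come from an automorphism of $\cC_s$ that normalizes the Cartan grading, and the quotient of this normalizer by the maximal torus acts on $\ZZ^2$ as the Weyl group $W(G_2)$ of order $12$: it is generated by the $S_3$-permutations of the three index pairs $(u_i,v_i)$ and by the outer involution $e_1\leftrightarrow e_2$, $u_i\leftrightarrow v_i$, which inverts degrees. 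The induced action on triples with $g_1g_2g_3=e$ is precisely the relation $\sim$. For $\Gamma_\cC(G,\cQ,T)$ the support $T$ and the identity component $\cQ$ are read off from the grading, and any $\FF$-algebra isomorphism $\cQ\to\cQ'$ extends to a $G$-graded isomorphism of $\cC$, because the doubling element $u\in\cC_t=\cQ^\perp$ may be freely rescaled within its norm class.

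The delicate step is the analysis of $\Gamma_\cC(G,\cK,T,\mu)$, and analogously of $\Gamma_\cC(G,\FF,T,\mu)$. The homomorphism $\mu$ is well defined because all nonzero elements of the homogeneous component $\cK u$ (resp.\ $\FF u$) differ by a factor in $\cK^\times$ (resp.\ $\FF^\times$), and its admissibility is forced by the decomposition of the norm as the orthogonal sum $n_\cK\perp\tilde\mu_1 n_\cK\perp\tilde\mu_2 n_\cK\perp\tilde\mu_3 n_\cK$ in the quadratic case and the diagonal form $\diag(1,\tilde\mu_1,\ldots,\tilde\mu_7)$ in the $\FF$ case. The main obstacle is showing that $(\cK,T,\mu)$ is a \emph{complete} invariant: given two gradings with the same data, one lifts an isomorphism $\cK\to\cK'$ to an $\FF$-algebra automorphism of $\cC$ by iterating the Cayley--Dickson doubling, and then adjusts the doubling elements $u,v$ in each homogeneous component by factors from $\cK^\times$. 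The freedom in these adjustments is exactly the coset $n(\cK^\times)$ that $\mu$ records, so equality of $\mu$ guarantees that coherent adjustments exist for all components simultaneously. This coherence argument, which rests on the multiplicativity of $n$ and on the admissibility condition ensuring existence of elements with the prescribed norms, is the heart of the proof.
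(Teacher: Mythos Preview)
Your proposal is correct and matches the paper's approach: both invoke Theorem~\ref{th:CD_or_Cartan} for existence, separate the four families via the identity component $\cC_e$, and handle the converse in the Cayley--Dickson cases by extending an isomorphism of the base subalgebra through successive doublings with norm-adjusted generators (the paper spells out the $\cK$ case exactly as you do). The only caveat is that your assertion ``a $G$-graded isomorphism must come from an automorphism of $\cC_s$ that normalizes the Cartan grading'' is not literally true in general; the paper sidesteps this by citing \cite[Theorem~4.21]{EKmon}, and the correct formulation is that two homomorphisms $\ZZ^2\to G$ into the character lattice of the maximal torus yield isomorphic $G$-gradings iff they differ by the Weyl action, which is indeed the relation~$\sim$.
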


\begin{proof}
Any $G$-grading on $\cC$ is isomorphic to a grading on our list by Theorem~\ref{th:CD_or_Cartan}. The isomorphism condition for coarsenings of the Cartan grading is proved as in \cite[Theorem 4.21]{EKmon}. In the remaining cases, any isomorphism of $G$-gradings $\Gamma$ and $\Gamma'$ takes the homogeneous component $\cC_e$ of $\Gamma$ to the component $\cC'_e$ of $\Gamma'$. This shows that gradings in different items cannot be isomorphic and that the subalgebra $\cQ$ (respectively $\cK$) must be isomorphic to $\cQ'$ (respectively $\cK'$) in the corresponding items. Also, any isomorphism of gradings preserves the support and the norm, which gives the conditions $(T,\mu)=(T',\mu')$. Conversely, assume for instance that $\cK$ and $\cK'$ are isomorphic quadratic separable subfields of $\cC$, and that we have two $G$-gradings $\Gamma=\Gamma_\cC(G,\cK,T,\mu)$ and $\Gamma'=\Gamma_\cC(G,\cK',T,\mu)$. Then we have $\Gamma:\cC=\cK\oplus \cK u\oplus \cK v\oplus \cK(uv)$ and $\Gamma':\cC=\cK'\oplus \cK'u'\oplus \cK'v'\oplus \cK'(u'v')$. As $\mu(u)=\mu(u')$, there exists $a\in (\cK')^\times$ such that $n(u)=n(a)n(u')=n(au')$, so any isomorphism $\varphi\colon\cK\rightarrow \cK'$ extends to an isomorphism $\varphi_1\colon\CD(\cK,\alpha)=\cK\oplus \cK u\rightarrow \CD(\cK',\alpha')=\cK'\oplus \cK'u'$ with $\varphi_1(u)=au'$. Also, there exists $b\in (\cK')^\times$ such that $n(v)=n(bv')$, so $\varphi_1$ extends to an automorphism $\varphi_2$ of $\cC$ with $\varphi_2(v)=bv'$. Then $\varphi_2$ is an isomorphism from $\Gamma$ to $\Gamma'$. The remaining cases are similar.
\end{proof}

If $\chr\FF\neq 2$, any grading $\Gamma_\cC(G,\cQ ,T)$ or $\Gamma_\cC(G,\cK,T,\mu)$ is a coarsening of a grading $\Gamma_\cC(G,\FF,T,\mu)$, which is equivalent to $\Gamma_\cC(\ZZ_2^3,\FF,\ZZ_2^3,\mu)$. On the other hand, if $\chr\FF=2$, any grading $\Gamma_\cC(G,\cQ ,T)$ is a coarsening of a grading $\Gamma_\cC(G,\cK,T,\mu)$. The following result is now clear.

\begin{corollary}\label{co:fine_gradings_Cayley}
Let $\cC$ be a Cayley algebra over a field $\FF$. Then, up to equivalence, the fine abelian group gradings on $\cC$ and their universal grading groups are the following.
\begin{itemize}
\item If $\cC$ is split, the Cartan grading with universal group $\ZZ^2$.
\item If $\chr\FF\neq 2$, the gradings $\Gamma_\cC(\ZZ_2^3,\FF,\ZZ_2^3,\mu)$ with universal group $\ZZ_2^3$; two such gradings, corresponding to $\mu$ and $\mu'$, are equivalent if and only if there is an automorphism $\varphi\in\Aut(\ZZ_2^3)$ such that $\mu'=\mu\circ\varphi$.
\item If $\chr\FF= 2$, the gradings $\Gamma_\cC(\ZZ_2^2,\cK,\ZZ_2^2,\mu)$ with universal group $\ZZ_2^2$, where $\cK$ is a quadratic separable subfield of $\cC$; two such gradings, corresponding to $(\cK,\mu)$ and $(\cK',\mu')$, are equivalent if and only if $\cK$ and $\cK'$ are isomorphic and there is an automorphism $\varphi\in\Aut(\ZZ_2^2)$ such that $\mu'=\mu\circ\varphi$. \qed
\end{itemize}
\end{corollary}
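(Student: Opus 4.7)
The plan is to derive the corollary directly from Theorem \ref{th:gradings_Cayley} together with the two coarsening observations in the paragraph preceding it. First I would identify candidates: by Theorem \ref{th:CD_or_Cartan} and Theorem \ref{th:gradings_Cayley}, every nontrivial abelian grading on $\cC$ is isomorphic to one of four families, and the coarsening remarks eliminate $\Gamma_\cC(G,\cQ,T)$ in both characteristics and $\Gamma_\cC(G,\cK,T,\mu)$ in characteristic not $2$. A dimension count within the surviving families then shows that only the ``maximal'' versions with $|T|=2^3$ (in characteristic not $2$) and $|T|=2^2$ (in characteristic $2$) can be fine, since any smaller $T$ gives a proper coarsening of one of these, obtained by recording one more step of the Cayley--Dickson doubling process.

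Next I would check that the three remaining candidates are really fine. The Cartan grading already has one-dimensional components in every nonzero degree, and the idempotents $e_1,e_2$ are pinned to the identity component: either by observing that the only idempotent in the universal group is $e$, or by noting that each $e_i$ acts nontrivially on some $\cC_{(i,j)}$, forcing $\deg e_i=e$. For $\Gamma_\cC(\ZZ_2^3,\FF,\ZZ_2^3,\mu)$, every component is one-dimensional, so fineness is immediate. For $\Gamma_\cC(\ZZ_2^2,\cK,\ZZ_2^2,\mu)$ in characteristic $2$, the key point is that a separable quadratic extension $\cK=\FF[t]/(t^2+t+c)$ admits no nontrivial abelian grading: if it did, the identity component would have to equal $\FF$ and the other component a line $\FF t'$ with $t'\notin \FF$, but the direct computation $(\alpha+\beta t)^2=\alpha^2+c\beta^2+\beta^2 t$ in characteristic $2$, combined with the irreducibility of $X^2+X+c$ over $\FF$, rules out $(t')^2\in\FF\cup \FF t'$. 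Consequently, any refinement restricts trivially to $\cC_e=\cK$, and then by $\cK$-linearity each of $\cK u$, $\cK v$, $\cK(uv)$ remains a single homogeneous component.

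Finally, the universal grading groups $\ZZ^2$, $\ZZ_2^3$, $\ZZ_2^2$ are read off from the defining relations on degrees of homogeneous elements (no hidden relations appear since the relevant products of homogeneous components are all nonzero). The equivalence classification is then obtained by combining the isomorphism criteria of Theorem \ref{th:gradings_Cayley} with the general principle that two fine gradings are equivalent iff they become isomorphic as $G$-gradings after relabeling by an automorphism of the universal group $G$; in the $\Gamma_\cC(\ZZ_2^3,\FF,\ZZ_2^3,\mu)$ case this gives the condition $\mu'=\mu\circ\varphi$ for some $\varphi\in\Aut(\ZZ_2^3)$, and in characteristic $2$ one needs additionally $\cK\simeq\cK'$. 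I expect the main obstacle to be the characteristic-$2$ fineness argument, where the absence of nontrivial abelian gradings on $\cK$ has to be established directly rather than by an eigenspace/character duality argument (which fails because the order of the putative grading group divides the characteristic).
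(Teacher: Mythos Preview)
Your proposal is correct and follows the same route as the paper: both derive the corollary from Theorem~\ref{th:gradings_Cayley} combined with the coarsening observations in the paragraph immediately preceding the statement. The paper, however, treats the result as immediate (it is marked with a \qed\ and introduced by ``The following result is now clear''), whereas you supply the verifications the paper leaves implicit --- in particular, the direct check that a separable quadratic extension in characteristic~$2$ admits no nontrivial abelian group grading, and the argument pinning $e_1,e_2$ to the identity component of any refinement of the Cartan grading (for the latter, the cleanest way is to note that $e_1=-u_jv_j$ is automatically homogeneous in any refinement, then use $e_1u_j=u_j$).
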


\begin{remark} 
We leave the classification of gradings up to isomorphism and of fine grading up to equivalence on quaternion algebras as an exercise for the reader. The situation is similar to Theorem \ref{th:gradings_Cayley} and Corollary \ref{co:fine_gradings_Cayley}, but simpler.
\end{remark}

If the characteristic of $\FF$ is different from $2$ and $3$, the Lie algebra of derivations $\Der_\FF(\cC)$ of a Cayley algebra $\cC$ is a simple Lie algebra of type $G_2$, and any simple Lie algebra of type $G_2$ appears in this way. (If $\chr\FF=3$, $\Der_\FF(\cC)$ is not simple, while if $\chr\FF=2$, $\Der_\FF(\cC)$ is isomorphic to $\frpsl_4(\FF)$, regardless of  the isomorphism class of $\cC$ \cite{CastilloElduque}.)  Moreover, assuming  $\chr\FF\neq 2$, the adjoint representation  gives an isomorphism of group schemes $\Ad\colon\AAut_\FF(\cC)\rightarrow \AAut\bigl(\Der_\FF(\cC)\bigr)$ (see \cite[Theorem 4.35]{EKmon} and \cite[Section 3]{CastilloElduque}), hence any $G$-grading on $\Der_\FF(\cC)$ is induced by a unique grading on $\cC$, with two gradings on $\cC$ being isomorphic (respectively, two fine gradings being equivalent) if and only if the induced gradings on $\Der_\FF(\cC)$ are isomorphic (respectively, equivalent). Therefore, Theorem \ref{th:gradings_Cayley} and Corollary \ref{co:fine_gradings_Cayley} immediately give the clasification, respectively, of $G$-gradings up to isomorphism and of fine gradings up to equivalence on the simple Lie algebras of type $G_2$.

\medskip

Over $\RR$, there is a unique quadratic field $\CC$ and a unique quaternion division algebra $\HH$. Moreover $n(\CC^\times)=n(\HH^\times)=n(\OO^\times)=(\RR^\times)^2=\RR_{>0}$, and $\RR^\times/(\RR^\times)^2$ is isomorphic to the cyclic group $\{\pm 1\}$.

It follows that, for the octonion division algebra $\OO$, the admissible group homomorphisms $\mu$ are all trivial, as $n(\OO^\times)=(\RR^\times)^2$, and we may write simply $\Gamma_\OO(G,\HH,T)$, $\Gamma_\OO(G,\CC,T)$ and $\Gamma_\OO(G,\RR,T)$, forgetting the $\mu$, where $T$ is an elementary abelian subgroup of order $2$, $4$ or $8$, respectively, in $G$. On the other hand, for the split octonion algebra $\OO_s$, no admissible group homomorphism $\mu\colon T\rightarrow \{\pm 1\}\simeq\RR^\times/(\RR^\times)^2$ can be trivial, i.e., it must take the value $-1$ (as otherwise the norm of $\OO_s$ would be definite), so there are three possibilities if the order of $T$ is $4$ and seven possibilities if the order is $8$. The next result is a consequence of Theorem \ref{th:gradings_Cayley}.

\begin{corollary}\label{co:real_Cayley}
Let $G$ be an abelian group.
\begin{enumerate}
\item Any nontrivial $G$-grading on the real octonion division algebra $\OO$ is isomorphic to exactly one of the following gradings:
\begin{itemize} 
\item $\Gamma_\OO(G,\HH,T)$ for a subgroup $T$ of order $2$ in $G$;
\item $\Gamma_\OO(G,\CC,T)$ for an elementary abelian subgroup $T$ of order $4$ in $G$;
\item $\Gamma_\OO(G,\RR,T)$ for an elementary abelian subgroup $T$ of order $8$ in $G$.
\end{itemize}
\item Any $G$-grading on the real split octonion algebra $\OO_s$ is isomorphic to one of the following gradings:
\begin{itemize}
\item $\Gamma_{\OO_s}(G,\gamma)$ for a triple $\gamma\in G^3$, with two such gradings, corresponding to $\gamma$ and $\gamma'$, being isomorphic if and only if $\gamma\sim\gamma'$;
\item $\Gamma_{\OO_s}(G,\HH,T)$ for a subgroup $T$ of order $2$ in $G$;
\item $\Gamma_{\OO_s}(G,\CC,T,\mu)$ for an elementary abelian subgroup $T$ of order $4$ in $G$ and a nontrivial group homomorphism $\mu\colon T\rightarrow \{\pm 1\}$ (so, for each such $T$, there are three different possibilities for $\mu$, which give nonisomorphic gradings);
\item $\Gamma_{\OO_s}(G,\RR,T,\mu)$ for an elementary abelian subgroup $T$ of order $8$ in $G$ and a nontrivial group homomorphism $\mu\colon T\rightarrow \{\pm1 \}$ (so, for each such $T$, there are seven different possibilities for $\mu$, which give nonisomorphic gradings).\qed
\end{itemize}
\end{enumerate}
\end{corollary}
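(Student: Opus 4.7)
The corollary follows from Theorem \ref{th:gradings_Cayley} by specializing to $\FF=\RR$, so my plan is to track through the four families of gradings and simplify the admissibility conditions using the arithmetic of real norm forms. The key inputs are: up to isomorphism, the only quadratic separable field extension of $\RR$ is $\CC$, the only quaternion division algebra is $\HH$; the norm groups satisfy $n(\CC^\times)=n(\HH^\times)=n(\OO^\times)=(\RR^\times)^2=\RR_{>0}$; and $\RR^\times/(\RR^\times)^2\simeq\{\pm 1\}$. These make the admissibility homomorphism $\mu$ take values in $\{\pm 1\}$ in every case.

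For part (1), $\OO$ is a division algebra with positive definite norm $n$, so the Cartan family does not arise. For the doubling families $\Gamma_\OO(G,\cK,T,\mu)$ and $\Gamma_\OO(G,\RR,T,\mu)$, the admissibility condition demands that $n$ be isometric respectively to $n_\cK\perp\tilde\mu_1 n_\cK\perp\tilde\mu_2 n_\cK\perp\tilde\mu_3 n_\cK$ or to $\diag(1,\tilde\mu_1,\ldots,\tilde\mu_7)$. Since the left-hand side is positive definite and each $n_\cK=n_\CC$ is positive definite, every $\tilde\mu_i$ must lie in $\RR_{>0}=(\RR^\times)^2\subseteq n(\CC^\times)$. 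Thus $\mu$ is forced to be trivial and may be suppressed from the notation, yielding the three types $\Gamma_\OO(G,\HH,T)$, $\Gamma_\OO(G,\CC,T)$, $\Gamma_\OO(G,\RR,T)$. Nonisomorphism of gradings in different items, and the dependence on $T$ alone within each item, are inherited from Theorem \ref{th:gradings_Cayley} (recalling that $\HH$ and $\CC$ are unique up to isomorphism).

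For part (2), $\OO_s$ is split, so its norm has signature $(4,4)$; the Cartan family $\Gamma_{\OO_s}(G,\gamma)$ and its equivalence/isomorphism description carry over verbatim. For the doubling families, the admissibility condition becomes a signature computation. In $\Gamma_{\OO_s}(G,\HH,T)$ with $T$ of order $2$, there is no $\mu$ to record and $\HH$ is the unique quaternion division subalgebra. In $\Gamma_{\OO_s}(G,\CC,T,\mu)$ with $T=\{e,t_1,t_2,t_3\}\simeq\ZZ_2^2$, writing $n_\CC$ of signature $(2,0)$, the sum $n_\CC\perp\tilde\mu_1 n_\CC\perp\tilde\mu_2 n_\CC\perp\tilde\mu_3 n_\CC$ has signature $(4,4)$ if and only if exactly two of the $\tilde\mu_i$ are negative, which is equivalent to $\mu\colon T\to\{\pm 1\}$ being nontrivial; a nontrivial homomorphism $\ZZ_2^2\to\ZZ_2$ is determined by its kernel (an index-$2$ subgroup of $T$), giving exactly $3$ choices. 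In $\Gamma_{\OO_s}(G,\RR,T,\mu)$ with $T\simeq\ZZ_2^3$, the form $\diag(1,\tilde\mu_1,\ldots,\tilde\mu_7)$ has signature $(4,4)$ iff exactly four of the $\tilde\mu_i$ are negative; a nontrivial $\mu\colon\ZZ_2^3\to\ZZ_2$ has kernel of order $4$ (containing $3$ nonidentity elements), so it sends exactly $4$ of the $7$ nonidentity elements to $-1$. Conversely, any such $\mu$ is admissible, so we obtain precisely $2^3-1=7$ choices for each $T$.

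The main work in the argument is the signature computation, which is routine but must be done carefully to ensure the admissibility condition is equivalent to the nontriviality of $\mu$ and to the correct count ($3$ and $7$). Once this is in place, the isomorphism statements in each item, as well as the non-isomorphism between different items, are direct specializations of the corresponding assertions in Theorem \ref{th:gradings_Cayley}, using that over $\RR$ the subalgebras $\HH$ and $\CC$ are unique up to isomorphism so no separate isomorphism condition on them is needed.
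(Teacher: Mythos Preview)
Your proof is correct and follows essentially the same approach as the paper: both specialize Theorem~\ref{th:gradings_Cayley} to $\FF=\RR$ using the uniqueness of $\CC$ and $\HH$, the identification $\RR^\times/(\RR^\times)^2\simeq\{\pm 1\}$, and signature considerations to determine which homomorphisms $\mu$ are admissible. Your treatment is in fact more explicit than the paper's, which merely asserts that for $\OO_s$ the admissible $\mu$ are exactly the nontrivial ones (giving $3$ and $7$ choices) without writing out the signature count; your verification that a nontrivial homomorphism $\ZZ_2^2\to\{\pm 1\}$ sends exactly two of the three nonidentity elements to $-1$, and that a nontrivial homomorphism $\ZZ_2^3\to\{\pm 1\}$ sends exactly four of the seven to $-1$, fills in this detail cleanly.
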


As for fine gradings, Corollary \ref{co:fine_gradings_Cayley} gives the next result.

\begin{corollary}\label{co:real_fine_Cayley}
\null\quad
\begin{enumerate}
\item Up to equivalence, the only fine grading on the real octonion division algebra $\OO$ is $\Gamma_{\OO}(\ZZ_2^3,\RR,\ZZ_2^3)$.
\item Up to equivalence, the fine gradings on the real split octonion algebra $\OO_s$ are the Cartan grading and the grading $\Gamma_{\OO_s}(\ZZ_2^3,\RR,\ZZ_2^3,\mu)$, where $\mu\bigl((\bar 1,\bar 0,\bar 0)\bigr)=\mu\bigl((\bar 0,\bar 1,\bar 0)\bigr)=1$, $\mu\bigl((\bar 0,\bar 0,\bar 1)\bigr)=-1$. \qed
\end{enumerate}
\end{corollary}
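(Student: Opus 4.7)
The plan is to apply Corollary \ref{co:fine_gradings_Cayley} with $\FF=\RR$ (so $\chr\FF=0\ne 2$), using the isomorphism $\RR^\times/(\RR^\times)^2\simeq\{\pm 1\}$ and the classification of real quadratic forms by signature. The only real content is to determine which homomorphisms $\mu\colon\ZZ_2^3\to\{\pm1\}$ are admissible for each of $\OO$ and $\OO_s$, and then to compute their orbits under $\Aut(\ZZ_2^3)\simeq GL_3(\ZZ_2)$.

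For $\OO$, the norm is positive definite. By Corollary \ref{co:fine_gradings_Cayley}, every fine abelian group grading on $\OO$ has the form $\Gamma_\OO(\ZZ_2^3,\RR,\ZZ_2^3,\mu)$ for some admissible $\mu$ (there is no Cartan fine grading, since $\OO$ is not split). Admissibility requires $n\simeq\diag(1,\tilde\mu_1,\ldots,\tilde\mu_7)$, and positive definiteness of $n$ forces all $\tilde\mu_i>0$. Hence $\mu$ must be trivial, giving the unique fine grading $\Gamma_\OO(\ZZ_2^3,\RR,\ZZ_2^3)$.

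For $\OO_s$, the Cartan grading (with universal group $\ZZ^2$) is fine, and any other fine grading is $\Gamma_{\OO_s}(\ZZ_2^3,\RR,\ZZ_2^3,\mu)$ with admissible $\mu$. The norm of $\OO_s$ has signature $(4,4)$, so we need $\diag(1,\tilde\mu_1,\ldots,\tilde\mu_7)$ to have $4$ positive and $4$ negative entries. If $\mu$ is trivial the signature is $(8,0)$, so $\mu$ must be nontrivial; conversely, any nontrivial homomorphism $\mu\colon\ZZ_2^3\to\ZZ_2$ has kernel of index $2$, so exactly $4$ elements of $\ZZ_2^3$ (including the identity) map to $+1$ and $4$ to $-1$, yielding signature $(1+3,4)=(4,4)$. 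Thus the admissible $\mu$ are precisely the $7$ nontrivial characters of $\ZZ_2^3$. The group $\Aut(\ZZ_2^3)=GL_3(\ZZ_2)$ acts transitively on nonzero linear functionals, so all these gradings are pairwise equivalent, and Corollary \ref{co:fine_gradings_Cayley} then yields a single equivalence class; the representative in the statement is one convenient choice.

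There is no genuine obstacle here: the argument is a bookkeeping exercise once one invokes Corollary \ref{co:fine_gradings_Cayley}. The only subtle point is the admissibility analysis, namely the observation that the signature constraint forces $\mu$ to be nontrivial but is automatically satisfied once $\mu$ is nontrivial, together with the transitivity of $GL_3(\ZZ_2)$ on nonzero characters to collapse the $7$ isomorphism classes of Corollary \ref{co:real_Cayley} into a single equivalence class.
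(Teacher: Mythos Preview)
Your proof is correct and follows the same approach as the paper, which simply states the result as an immediate consequence of Corollary~\ref{co:fine_gradings_Cayley} (the admissibility analysis you spell out is already carried out in the paragraph preceding Corollary~\ref{co:real_Cayley}). You have just made explicit the signature computation and the transitivity of $GL_3(\ZZ_2)$ on nonzero characters that the paper leaves to the reader.
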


\medskip 

Passing to the Lie algebras of derivations, we see (cf. \cite{g2f4}) that the compact real form of type $G_2$ admits, up to equivalence, only one fine grading, whose universal group is $\ZZ_2^3$ and which has $7$ homogeneous components of dimension $2$, while the split real form of type $G_2$ admits, up to equivalence, two fine gradings: the Cartan grading and a $\ZZ_2^3$-grading  analogous to the one on the compact form.

\section{Gradings of Type I and II on simple real Lie algebras of type $D_4$}\label{s:Types_I_II}

We will now briefly discuss gradings of Types I and II on the simple Lie algebras of type $D_4$ over the field $\RR$ (or any real closed field) 
by an abelian group $G$.
So, let $\cL=\frso_{p,q}(\RR)$, $p+q=8$, $p\ge q$. As mentioned in the Introduction, $\cL\simeq\cL(E)$ for the trialitarian algebra $E=A\times\Cl(A,\sigma_A)$
where $A=M_8(\RR)$ and $\sigma_A$ is an orthogonal involution of inertia $(p,q)$. The involution $\sigma$ of $E$ is $\sigma_A\times\ul{\sigma}_A$, the center 
$\LL$ is $\RR\times\KK$, where $\KK$ is the center of the Clifford algebra, and 
the trialitarian structure $\alpha$ is given by \cite[Proposition 43.15]{KMRT}.

\subsection{Classification up to isomorphism}

Let $\Gamma$ be a $G$-grading on $\cL$ and let $\eta=\eta_\Gamma$ be the corresponding homomorphism $G^D\to\AAut_\RR(\cL)\simeq\AAut_\RR(E,\LL,\sigma,\alpha)$.
In this section, we assume that $\Gamma$ is of Type~I or Type~II, i.e., the image $\pi\eta(G^D)$ in $\AAut_\RR(\LL)$ has order $1$ or $2$, respectively.
(Recall that this is automatic unless $p$ and $q$ are odd.) We will show how $\Gamma$ can be described in terms of a suitable matrix model, i.e., 
a central simple associative $\RR$-algebra (of degree $8$) $R$ equipped with an orthogonal involution $\varphi$. 
The $G$-gradings on such algebras with involution $(R,\varphi)$ are classified up to isomorphism in \cite{BKR_Lie}. 
We will say that a grading on $(R,\varphi)$ is of Type~I if the image of the corresponding homomorphism $G^D\to\AAut_\RR(R,\varphi)$ is contained in the 
connected component of $\AAut_\RR(R,\varphi)$, and of Type~II otherwise. The distinction can be made by looking at the image of the character group
$\widehat{G}\bydef\Hom(G,\CC^\times)$ (i.e., the group of $\CC$-points of $G^D$) under the homomorphism $\widehat{G}\to\PGO_8(\CC)$ associated to the 
grading on the complexification of $(R,\varphi)$ --- see \cite[Lemma 33]{EK14}.

If $\Gamma$ is of Type~I then the corresponding grading on $\LL$ is trivial and the image $\eta(G^D)$ lies in $\PGOs^+_{p,q}$, 
which is the connected component of $\PGOs_{p,q}=\AAut_\RR(A,\sigma_A)$. 
Hence, $\Gamma$ can be identified with the restriction of a unique $G$-grading on $A=M_8(\RR)$ (of Type~I) 
to the Lie algebra of skew-symmetric elements $\frso_{p,q}(\RR)$. 
If $\Gamma$ is of Type~II then the grading on $\LL$ is nontrivial, but we still have a factor of $\LL$, and hence of $E$, that is $G$-graded.
The details vary depending on $(p,q)$, as follows.

If $(p,q)=(8,0)$ or $(4,4)$ then we can identify $\cL$ with the triality Lie algebra $\tri(\wb{\cC},\bullet,n)$ 
where $(\wb{\cC},\bullet,n)$ is the para-Hurwitz algebra associated, respectively, to $\cC=\OO$ and $\cC=\OO_s$. 
Here $\LL=\RR\times\RR\times\RR$ and $E=A\times B\times C$ where $A=B=C=M_8(\RR)$.
Consequently, one can define an outer $S_3$-action on $\cL$, which shows that the groups of $\RR$-points of \eqref{eq:exact_D4} form a split exact sequence:
\begin{equation}\label{eq:exact_80_44}
\xymatrix{
1\ar[r] & \PGO^+_{p,q}(\RR)\ar[r] & \Aut_\RR(E,\LL,\sigma,\alpha)\ar[r]^-{\pi_\RR} & S_3\ar[r] & 1
}
\end{equation}
If $\Gamma$ is of Type~II then $\pi\eta(G^D)$ is one of the three subgroupschemes of $\mathbf{S}_3$ corresponding to transpositions. 
Applying, if necessary, an automorphism of $E$ that projects onto a suitable $3$-cycle in $S_3$, we may assume that the transposition is $(2,3)$, 
i.e., $\eta(G^D)$ lies in $\AAut_\RR(A,\sigma_A)$, and $\Gamma$ can be identified with the restriction of a unique $G$-grading on $A$, 
just as for Type~I gradings. But note that, unlike in the case of Type~I, here the factor $A$ is distinguished. 
Therefore, in view of the exact sequence \eqref{eq:exact_80_44}, the classification of $G$-gradings on the Lie algebra $\cL=\frso_{8,0}(\RR)$, 
respectively $\cL=\frso_{4,4}(\RR)$,
has the following relationship with the classification of $G$-gradings on the matrix algebra 
$M_8(\RR)$ with involution given by a quadratic form on $\RR^8$ of inertia $(8,0)$,
respectively $(4,4)$:
\begin{itemize} 
\item one isomorphism class of Type~I gradings on $\cL$ may correspond to $1$, $2$ or $3$ isomorphism classes of Type~I gradings on $M_8(\RR)$ 
--- see the next subsection for details;
\item  
there is a bijection between the isomorphism classes of Type~II gradings on $\cL$ and those of Type~II gradings on $M_8(\RR)$.
\end{itemize}

If $(p,q)=(6,2)$ then $\LL=\RR\times\RR\times\RR$ and $E=A\times B\times C$ where $A=M_8(\RR)$ but $B=C=M_4(\HH)$, 
so the sequence of $\RR$-points of \eqref{eq:exact_D4} fails to be exact. Actually, we have the following split exact sequence:
\[
\xymatrix{
1\ar[r] & \PGO^+_{6,2}(\RR)\ar[r] & \Aut_\RR(E,\LL,\sigma,\alpha)\ar[r]^-{\pi_\RR} & \langle(2,3)\rangle\ar[r] & 1
}
\]
because any reflection in $\RR^8$ induces an automorphism of its even Clifford algebra that interchanges the factors $B$ and $C$.
If $\Gamma$ is of Type~I then it is the restriction of a unique grading on the distinguished factor $A$. 
If $\Gamma$ is a Type~II grading then we have two possibilities. 
If $\pi\eta(G^D)$ is the subgroupscheme of $\mathbf{S}_3$ corresponding to $(2,3)$ then $\eta(G^D)$ stabilizes 
the factor $A$ and hence, again, $\Gamma$ is the restriction of a unique grading on $A$. 
Otherwise, $\eta(G^D)$ stabilizes exactly one of the two factors $B$ and $C$. 
Applying, if necessary, an automorphism of $E$, we may assume that it is $B$.
Note that in this case the factor $B$ is distinguished, and also the group $\Aut_\RR(B,\sigma_B)$ is isomorphic to $\PGO^+_{6,2}(\RR)$.
Therefore, the set of isomorphism classes of gradings on the Lie algebra $\cL=\frso_{6,2}(\RR)$ is in bijection with the disjoint union of the 
following two sets:
\begin{itemize} 
\item the isomorphism classes of gradings on $M_8(\RR)$ with involution given by a quadratic form on $\RR^8$ of inertia $(6,2)$;
\item the isomorphism classes of Type~II gradings on $M_4(\HH)$ with involution given by a skew-hermitian form on $\HH^4$.
\end{itemize}

If $(p,q)=(7,1)$ or $(5,3)$ then $\LL=\RR\times\CC$, $E=M_8(\RR)\times M_8(\CC)$, and the groups of $\RR$-points of \eqref{eq:exact_D4} 
form the following split exact sequence:
\[
\xymatrix{
1\ar[r] & \PGO^+_{p,q}(\RR)\ar[r] & \Aut_\RR(E,\LL,\sigma,\alpha)\ar[r]^-{\pi_\RR} & \Aut_\RR(\CC)\ar[r] & 1
}
\]
Moreover, $\AAut_\RR(\CC)\simeq\mathbf{C}_2$ is the only subgroupscheme of order $2$ in $\AAut_\RR(\LL)\simeq\boldsymbol{\mu}_3\rtimes\mathbf{C}_2$, 
since there is only one subgroup of order $2$ in $S_3$ that is stable under the action of the absolute Galois group. 
Therefore, the isomorphism classes of $G$-gradings of Type~I or II on the Lie algebra $\cL=\frso_{7,1}(\RR)$, respectively $\cL=\frso_{5,3}(\RR)$,
are in bijection with the isomorphism classes of $G$-gradings on $M_8(\RR)$ with involution given by a quadratic form on $\RR^8$ of inertia $(7,1)$, 
respectively $(5,3)$.

\subsection{Related triples of gradings on $\End_\FF(\cC)$}

In this subsection, the ground field $\FF$ is arbitrary with $\chr{\FF}\ne 2$. 
We follow \cite[\S 3.1]{EK15}, but without assuming $\FF$ algebraically closed.
Let $\LL=\FF\times\FF\times\FF$ and let $(\wb{\cC},\bullet,n)$ be the 
para-Hurwitz algebra associated to a Cayley algebra $\cC$. Consider the cyclic composition $V=\wb{\cC}\otimes\LL$, with 
$\rho(\ell_1,\ell_2,\ell_3)=(\ell_2,\ell_3,\ell_1)$, and the corresponding trialitarian algebra $E=\End_\LL(V)$. 

Suppose we have a $G$-grading $\Gamma$ on $(E,\LL,\rho,\sigma,\alpha)$, i.e., a grading on the $\FF$-algebra $E$ such that 
the maps $\sigma$ and $\alpha$ preserve the degree. Assume that $\Gamma$ is of Type~I, i.e, its restriction to the center $\LL$ is trivial. 
Then the decomposition $\LL=\FF\times\FF\times\FF$ yields the $G$-graded decomposition $E=E_1\times E_2\times E_3$. 
Denote the $G$-grading on $E_i$ by $\Gamma_i$, $i=1,2,3$. 
We can identify each $(E_i,\sigma_i)$ with $\End_\FF(\cC)$ where the involution is induced by the norm $n$. 
Thus, we can view all the $\Gamma_i$ as gradings on the same algebra $\End_\FF(\cC)\simeq M_8(\FF)$. 
We will say that $(\Gamma_1,\Gamma_2,\Gamma_3)$ is the {\em related triple of gradings} associated to $\Gamma$. 

The grading $\Gamma$ corresponds to a homomorphism $\eta\colon G^D\to\AAut_\LL(E,\LL,\sigma,\alpha)$. 
Since each restriction homomorphism $\pi_i\colon\AAut_\LL(E,\LL,\sigma,\alpha)\to\AAut_\FF(E_i,\sigma_i)$ is a closed embedding 
whose image is the connected component of $\AAut_\FF(E_i,\sigma_i)$ (isomorphic to $\PGOs^+(\cC,n)$), 
each of the gradings $\Gamma_i$ (corresponding to $\pi_i\circ\eta$) is of Type~I and uniquely determines $\Gamma$.
Moreover, there exists $\Gamma$ such that $\Gamma_1$ is any given Type~I grading on the algebra with involution $\End_\FF(\cC)$. 

The outer action of $S_3$ on $\TRIs(\wb{\cC},\bullet,n)$ and on its quotient $\AAut_\LL(E,\LL,\sigma,\alpha)$ yields the following action 
on related triples of gradings: $A_3$ permutes the components of $(\Gamma_1,\Gamma_2,\Gamma_3)$ cyclically and the transposition $(2,3)$ sends 
$(\Gamma_1,\Gamma_2,\Gamma_3)$ to $(\wb{\Gamma}_1,\wb{\Gamma}_3,\wb{\Gamma}_2)$ where $\wb{\Gamma}_i$ denotes the image of $\Gamma_i$ 
under the inner automorphism of $\End_\FF(\cC)$ corresponding to the standard involution of $\cC$ (which is an improper isometry of $n$).

Let us see to what extent the knowledge of Type~I gradings on the algebra with involution $\End_\FF(\cC)$ 
allows us to classify Type~I gradings on the trialitarian algebra $E$ or, equivalently, on the Lie algebra $\cL(E)=\tri(\wb{\cC},\bullet,n)$. 
For two Type~I gradings $\Gamma'$ and $\Gamma''$ on $\End_\FF(\cC)$, we will write $\Gamma'\sim\Gamma''$ if there exists an element of $\PGO^+(\cC,n)$ 
sending $\Gamma'$ to $\Gamma''$. Thus, $\Gamma'$ and $\Gamma''$ are isomorphic if and only if $\Gamma'\sim\Gamma''$ or $\wb{\Gamma'}\sim\Gamma''$.

The stabilizer of a given Type~I grading $\Gamma$ on $E$ under the outer action of $S_3$ can have size $1$, $2$, $3$ or $6$. 
Therefore, we have the following three possibilities (the last one corresponding to sizes $3$ and $6$):
\begin{itemize}
\item If $\Gamma_i\nsim\Gamma_j$ for $i\ne j$ and $\Gamma_i\nsim\wb{\Gamma}_j$ for all $i,j$ then the isomorphism class of $\Gamma$ corresponds to $3$ distinct isomorphism classes of Type~I gradings on $\End_\FF(\cC)$ (one for each of $\Gamma_i$);
\item If $\Gamma_i\sim\wb{\Gamma}_i$ for some $i$ and $\Gamma_i\nsim\Gamma_j\sim\wb{\Gamma}_k$, where $\{i,j,k\}=\{1,2,3\}$, then the isomorphism class of $\Gamma$ corresponds to $2$ distinct isomorphism classes of Type~I gradings on $\End_\FF(\cC)$ (one for $\Gamma_i$ and one for $\Gamma_j$);
\item If $\Gamma_1\sim\Gamma_2\sim\Gamma_3$ then the isomorphism class of $\Gamma$ corresponds to $1$ isomorphism class of Type~I gradings on $\End_\FF(\cC)$.  
\end{itemize}
Given $\Gamma_1$, one can --- in principle --- compute $\Gamma_2$ and $\Gamma_3$, but obtaining explicit formulas seems to be difficult.

\section{Lifting Type III gradings from $\End_\LL(V)$ to $V$}\label{s:lifting}

Let $\LL$ be a cubic \'{e}tale algebra over a a field $\FF$, $\chr\FF\ne 2,3$, let $(V,\LL,\beta,Q)$ be a twisted composition of rank $8$, 
and let $E=\End_\LL(V)$ be the corresponding trialitarian algebra. 
Suppose we have a Type~III grading $\Gamma$ on $(E,\LL,\rho,\sigma,\alpha)$ by an abelian group $G$. Recall that this means that the projection of the image of 
$\eta=\eta_\Gamma\colon G^D\to\AAut_\FF(E,\LL,\sigma,\alpha)$ in $\AAut_\FF(\LL)$ is $\mathbf{A}_3$. 
We consider the problem of ``lifting'' from $E$ to $V$, i.e., the existence of a homomorphism $\eta'$ that would complete the following commutative diagram:
\begin{equation}
\begin{aligned}\label{eq:eta_diag}
\xymatrix{
& \AAut_\FF(V,\LL,\beta,Q)\ar[dd]^{\mathrm{Int}}\\
G^D\ar@{-->}[ru]^-{\eta'}\ar[rd]^-{\eta} &\\
& \AAut_\FF(E,\LL,\sigma,\alpha)
}
\end{aligned}
\end{equation}
or, equivalently, the existence of a $G$-grading $\Gamma'$ on $(V,\LL,\beta,Q)$, i.e., a grading on $V$ as an $\LL$-module 
making the linearizations of the quadratic maps $\beta$ and $Q$ degree-preserving, such that $\Gamma'$ would induce the given grading $\Gamma$ on $E=\End_\LL(V)$. 

If $\FF$ is algebraically closed then $\LL=\FF\times\FF\times\FF$ and, for a fixed $3$-cycle $\rho$ in $A_3$, $V$ admits a unique multiplication $*$ that makes it a cyclic composition. Note that
$\AAut_\FF(V,\LL,\rho,*,Q)$ is the inverse image of $\mathbf{A}_3$ under the projection $\AAut_\FF(V,\LL,\beta,Q)\to\AAut_\FF(\LL)\simeq\mathbf{S}_3$
(in other words, the stabilizer of $\rho$). Thus, the above lifting problem for $(V,\LL,\beta,Q)$ is equivalent to the same problem for $(V,\LL,\rho,*,Q)$, 
which was solved in our paper \cite{EK15}. We will first recall the relevant results from that paper and give them a homological interpretation. 
Then we will consider the case of a real closed field $\FF$ (such as $\RR$) by extending scalars to its algebraic closure and applying Galois descent.

\subsection{Lifting over algebraically closed fields}

Assume $\FF$ is algebraically closed. Then we have $\LL=\FF\times\FF\times\FF$, $V\simeq\wb{\cC}\otimes\LL$, 
where $(\wb{\cC},\bullet,n)$ is the para-Hurwitz algebra associated to the unique Cayley algebra $\cC$, 
and it is shown in \cite[\S 4.2]{EK15} (leading up to Theorem 12) that, for some $\lambda\in\LL^\times$, 
there exists a grading $V=\bigoplus_{g\in G}V_g$ on the cyclic composition $(V,\LL,\rho,\lambda *,\lambda^\sharp Q)$ 
that induces $\Gamma$ on $E=\End_\LL(V)$. 
(Note that the similar cyclic compositions $(V,\LL,\rho,\lambda *,\lambda^\sharp Q)$ and $(V,\LL,\rho,*,Q)$ 
induce the same trialitarian structure on $E$.)
Since $\FF$ is algebraically closed, we can find $\ell\in\LL^\times$ such that $\ell^{-1}\ell^\sharp =\lambda$ 
(see the proof of Lemma \ref{lm:FxK} below) and hence $V=\bigoplus_{g\in G}\ell V_g$ 
is a grading on the original cyclic composition $(V,\LL,\rho,*,Q)$ 
that induces $\Gamma$ on $E$. This proves the existence of $\eta'$.
We will now discuss the extent to which $\eta'$ is not unique, which will be crucial for Galois descent.

First suppose we have a short exact sequence of groups $1\to A\to B\to C\to 1$ and a group homomorphism 
$\phi\colon H\to C$ that lifts to a homomorphism $\phi'\colon H\to B$. It is well known and easy to check that 
all possible liftings of $\phi$ have the form $\xi\phi'$ where $\xi\colon H\to A$ is a (left) $1$-cocycle:
$\xi(xy)=\xi(x)(x\cdot \xi(y))$ for all $x,y\in H$, with $H$ acting on $A$ through the composition of $\phi'$
with the inner action of $B$ on its normal subgroup $A$. Thus, the fiber of the map $\Hom(H,B)\to\Hom(H,C)$ over 
$\phi$ is in bijection with the pointed set $\mathrm{Z}^1(H,A)$ of all $1$-cocycles, 
which depends on the chosen ``base point'' $\phi'$ in the fiber. 

We will use an analog of this result for affine group schemes over $\FF$. 
Recall that, for affine group schemes $\mathbf{H}$ and $\mathbf{A}$, a left action of $\mathbf{H}$ on $\mathbf{A}$ 
is a morphism of affine schemes $\theta\colon\mathbf{H}\times\mathbf{A}\to\mathbf{A}$ such that, 
for all $\cR$ in $\Alg_\FF$, $\theta_\cR\colon\mathbf{H}(\cR)\times\mathbf{A}(\cR)\to\mathbf{A}(\cR)$ is an 
action of $\mathbf{H}(\cR)$ on the group $\mathbf{A}(\cR)$ by automorphisms. Then we can define  
$\mathrm{Z}^1(\mathbf{H},\mathbf{A})$ as the set of all morphisms of affine schemes $\xi\colon\mathbf{H}\to\mathbf{A}$
such that, for all $\cR$ in $\Alg_\FF$, the map $\xi_\cR\colon\mathbf{H}(\cR)\to\mathbf{A}(\cR)$ is a $1$-cocycle.
The distinguished point of $\mathrm{Z}^1(\mathbf{H},\mathbf{A})$ is given by mapping each $\mathbf{H}(\cR)$ to the 
identity element of $\mathbf{A}(\cR)$. We can also define the group $\mathrm{Z}^0(\mathbf{H},\mathbf{A})$ as the 
fixed points of $\mathbf{H}$ in the group $\mathbf{A}(\FF)$, i.e., the elements $a\in\mathbf{A}(\FF)$ satisfying
$x\cdot a=a$ for all $x\in\mathbf{H}(\cR)$ and all $\cR$ in $\Alg_\FF$. (Note that $\mathbf{A}(\FF)$ is regarded as 
a subgroup of $\mathbf{A}(\cR)$ through the canonical map $\FF\to\cR$.)

If $\mathbf{A}$ is abelian then we can define a complex of abelian groups 
$\mathrm{C}^k(\mathbf{H},\mathbf{A}) \bydef \mathrm{Aff}_\FF(\mathbf{H}^k,\mathbf{A})$ for all $k=0,1,2,\ldots$,
with the convention $\mathbf{H}^0\bydef\mathbf{1}$ so that $\mathrm{C}^0(\mathbf{H},\mathbf{A})=\mathbf{A}(\FF)$, 
where the differentials are given by the usual formulas 
of group cohomology for each $\cR$ in $\Alg_\FF$. For example, 
$\mathrm{d}\colon\mathbf{A}(\FF)\to\mathrm{Aff}_\FF(\mathbf{H},\mathbf{A})$ is given (in multiplicative notation) 
by $(\mathrm{d}a)(x)=(x\cdot a)a^{-1}$ for all $x\in\mathbf{H}(\cR)$ and all $\cR$ in $\Alg_\FF$.
This leads to cohomology groups $\mathrm{H}^k(\mathbf{H},\mathbf{A})$ for all $k\ge 0$.

Let us now apply these considerations to $\mathbf{H}=G^D$ and the short exact sequence of affine group schemes 
given by the quotient map $\mathrm{Int}$ from Diagram \eqref{eq:eta_diag}, i.e.,
\[
\xymatrix{
\mathbf{1}\ar[r] & \mathbf{A}\ar[r] & \TRIs(\wb{\cC},\bullet,n)\rtimes\mathbf{S}_3\ar[r]^{\mathrm{Int}} & \AAut_\FF(E,\LL,\sigma,\alpha)\ar[r] & \mathbf{1}
}
\]
where $\mathbf{A}\simeq\boldsymbol{\mu}_2\times\boldsymbol{\mu}_2$ is the center of $\TRIs(\wb{\cC},\bullet,n)\simeq\Spins(\cC,n)$,
given by
\[
\mathbf{A}(\cR)=\{(\lambda_1,\lambda_2,\lambda_3)\in\cR\times\cR\times\cR\;|\;\lambda_i^2=1\mbox{ and }\lambda_1\lambda_2\lambda_3=1\}
\]
for all $\cR$ in $\Alg_\FF$. By \cite[Corollary 14]{EK15}, there are exactly four gradings on $(V,\LL,\rho,*,Q)$ that 
induce $\Gamma$ on $E$, and if $\Gamma':V=\bigoplus_{g\in G}V_g$ is one of them, then they all have the form $\Gamma'_a:V=\bigoplus_{g\in G}aV_g$ 
where $a$ ranges over the four elements of the group $\mathbf{A}(\FF)=Z(\TRI(\wb{\cC},\bullet,n))\subset\LL^\times$. 
Let us compute the homomorphism $\eta''$ corresponding to the grading $\Gamma'_a$ in terms of the homomorphism $\eta'$ corresponding to $\Gamma'$.
Write $a=\sum_{h\in G}a_h$ where $a_h\in\LL_h$. Then, for any $v\in V_g$, $r\in\cR$ and $f\in G^D(\cR)=\Alg_\FF(\FF G,\cR)$, 
Equation \eqref{eq:def_eta} gives us the following:
\[
\begin{split}
\eta'_\cR(f)(av\otimes r)&=\sum_{h\in G}\eta'_\cR(f)(a_h v\otimes r)=\sum_{h\in G}a_h v\otimes f(hg)r\\
&=\Big(\sum_{h\in G}a_h\otimes f(h)\Big)(a^{-1}\otimes 1)(av\otimes f(g)r)=(f\cdot a)a^{-1}\eta''_\cR(f)(av\otimes r),
\end{split}
\]
where the action of $G^D(\cR)$ on $(\LL\otimes\cR)^\times\simeq\cR^\times\times\cR^\times\times\cR^\times$ 
is by means of the $G$-grading on $\LL$ (the restriction of $\Gamma$). On $\mathbf{A}(\cR)$, this action coincides 
with the one given by the composition of $\eta'_\cR$ and the inner action of $\AAut_\FF(V,\LL,\beta,Q)(\cR)$ on $\mathbf{A}(\cR)$. 
Therefore, $\eta''=(\mathrm{d}a)^{-1}\eta'$. This proves that $\mathrm{H}^1(G^D,\mathbf{A})=1$.

\subsection{Lifting over real closed fields}

Let $\FF$ be a real closed field, so $\FF$ admits a unique ordering (in particular, $\chr{\FF}=0$), defined by 
$x\ge 0$ if and only if $x$ is a square, and the quadratic extension 
$\KK\bydef\FF[\sqrt{-1}]$ is the algebraic closure of $\FF$. 
Then $\LL$ is either $\FF\times\FF\times\FF$ or $\FF\times\KK$, but 
it is explained in the Introduction that there are no Type~III gradings in the first case
(since $\FF$ does not contain a primitive cube root of $1$). 
So, we assume $\LL=\FF\times\KK$. Let $\iota$ be the generator of $\cG=\Gal(\KK/\FF)$.

\begin{lemma}\label{lm:FxK}
For any $\lambda\in\LL^\times$, there exists $\ell\in\LL^\times$ such that $\ell^{-1}\ell^\sharp =\lambda$.
\end{lemma}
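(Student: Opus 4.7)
The plan is to prove surjectivity of the map $\delta\colon\LL^\times\to\LL^\times$, $\delta(\ell)\bydef\ell^{-1}\ell^\sharp$, by a direct calculation in the coordinates of $\LL=\FF\times\KK$, reducing the existence of $\ell$ to a Hilbert~90 type statement for the quadratic extension $\KK/\FF$. Write $\lambda=(b_0,c_0)\in\FF^\times\times\KK^\times$ and look for $\ell=(b,c)\in\FF^\times\times\KK^\times$. The formula $\ell^\sharp=(c\bar{c},b\bar{c})$ recalled before the lemma turns $\ell^{-1}\ell^\sharp=\lambda$ into the pair of scalar equations
\[
c\bar{c}=bb_0\ \text{in }\FF,\qquad b\bar{c}=c_0c\ \text{in }\KK.
\]

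The first step is to pin down $b$ and $|c|$. Applying $z\mapsto z\bar{z}$ to the second equation gives $b^2|c|^2=|c_0|^2|c|^2$, hence $|b|=|c_0|$. The first equation forces $bb_0>0$, so the only possibility is $b=\sgn(b_0)|c_0|$; the first equation then determines $|c|^2=bb_0=|b_0c_0|>0$, a positive element of $\FF$, so $|c|$ is fixed as well.

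What remains of the system is the equation $\bar{c}/c=c_0/b$, whose right-hand side lies in the norm-one subgroup $\KK^{1}\bydef\{u\in\KK^\times\mid u\bar{u}=1\}$ because $|c_0|=|b|$. I therefore need the map $c\mapsto\bar{c}/c$ from $\KK^\times$ onto $\KK^{1}$ to be surjective, which is Hilbert~90 for $\KK/\FF$; this is the only non-formal step and is the main (but mild) obstacle. It will be handled by the explicit formula $c=1+\bar{u}$ for $u\in\KK^{1}\setminus\{-1\}$: the identity $u\bar{u}=1$ rewrites as $u(1+\bar{u})=1+u$, so $\bar{c}/c=(1+u)/(1+\bar{u})=u$; the exceptional case $u=-1$ is covered by $c=\sqrt{-1}\in\KK$. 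Finally, scaling this $c$ by any positive element of $\FF$ preserves $\bar{c}/c$ while freely adjusting $|c|$, so one can achieve the value of $|c|$ fixed above and obtain the desired $\ell$. The argument is uniform over any real closed field.
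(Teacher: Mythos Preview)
Your proof is correct but takes a different route from the paper's. You solve the system $c\bar{c}=bb_0$, $b\bar{c}=c_0c$ directly: determine $b$ and $|c|$ from norm considerations, then reduce to the Hilbert~90 statement that every norm-one element of $\KK$ has the form $\bar{c}/c$, which you verify by the explicit formula $c=1+\bar{u}$ (with $c=\sqrt{-1}$ for $u=-1$). The paper instead observes the identity $\ell^{-1}\ell^\sharp=N(\ell)\ell^{-2}$, so it suffices to find $\ell$ with $\ell^2=\lambda^\sharp$ and $N(\ell)=N(\lambda)$. Since $\lambda^\sharp=(c_0\bar{c}_0,b_0\bar{c}_0)$ has non-negative first component (a square in the real closed field $\FF$) and second component in the algebraically closed field $\KK$, a square root $\ell$ exists; then $N(\ell)^2=N(\lambda^\sharp)=N(\lambda)^2$, and replacing $\ell$ by $-\ell$ if necessary (noting $N(-\ell)=-N(\ell)$) gives $N(\ell)=N(\lambda)$. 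The paper's argument is shorter and avoids splitting into coordinates; yours is more explicit and makes transparent exactly which arithmetic facts about $\FF$ and $\KK$ are being used.
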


\begin{proof}
For any $\lambda=(b,c)\in\FF\times\KK$, we have $\lambda^\sharp =(c\iota(c),b\iota(c))$, 
and $c\iota(c)\ge 0$. It follows that we can find $\ell$ satisfying $\ell^2=\lambda^\sharp $. 
Then $N(\ell)^2=N(\ell^2)=N(\lambda^\sharp )=N(\lambda)^2$, so, 
replacing $\ell$ with $-\ell$ if necessary, we may assume $N(\ell)=N(\lambda)$. 
Then $\ell^{-1}\ell^\sharp =\ell^{-2}N(\ell)=(\lambda^\sharp )^{-1}N(\lambda)=\lambda$.
\end{proof}

Since $\LL$ is not a field, any twisted composition $(V,\LL,\beta,Q)$ of rank $8$ is similar to $\TC(\wb{\cC},\LL)$
where $(\wb{\cC},\bullet,n)$ is the para-Hurwitz algebra associated to a Cayley algebra $\cC$ over $\FF$. 
It follows from Lemma \ref{lm:FxK} that similar twisted compositions over $\LL$ are actually isomorphic,
so we may assume $(V,\LL,\beta,Q)=\TC(\wb{\cC},\LL)$.
Recall that $\TC(\wb{\cC},\LL)$ is obtained by Galois descent as in Equation \eqref{eq:twisted_comp} 
(with discriminant $\Delta\simeq\KK$) 
from the cyclic composition $(\wt{V},\wt{\LL},\rho,*,Q)$ where 
$\wt{V}=\wb{\cC}\otimes\wt{\LL}$ and $\wt{\LL}=\LL\otimes\KK\simeq\KK\times\KK\times\KK$.

Let $\mathbf{B}=\AAut_\FF(V,\LL,\beta,Q)$ and let $\mathbf{A}$ be the center of its connected component, 
$\Spins(V,\LL,\beta,Q)$. 
Note that $\mathbf{B}_\KK\simeq(\TRIs(\wb{\cC},\bullet,n)\rtimes\mathbf{S}_3)_\KK$.
We may assume that $\iota$ acts on $\wt{\LL}$ by sending 
$(\lambda_1,\lambda_2,\lambda_3)$ to $(\iota(\lambda_1),\iota(\lambda_3),\iota(\lambda_2))$. 
In particular, $\iota$ acts as the permutation $(2,3)$ on the center $A\bydef\mathbf{A}(\KK)$ of the group 
$\Spins(V,\LL,\beta,Q)(\KK)\simeq\Spins(\cC,n)(\KK)$. 

\begin{lemma}\label{lm:trivial_Galois_cohom}
$\mathrm{H}^1(\cG,A)=1$.
\end{lemma}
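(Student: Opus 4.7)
The plan is to compute $\mathrm{H}^1(\cG, A)$ by a direct finite calculation, once the $\cG$-module structure on $A$ is made fully explicit.

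First I would pin down $A$ as an abstract group. Since $\KK$ is a field and the elements of $\mathbf{A}(\KK)$ satisfy $\lambda_i^2 = 1$, we have $\lambda_i \in \{\pm 1\}$. The constraint $\lambda_1\lambda_2\lambda_3 = 1$ then cuts $\{\pm 1\}^3$ down to four triples
\[
a_0 = (1,1,1),\quad a_1 = (1,-1,-1),\quad a_2 = (-1,1,-1),\quad a_3 = (-1,-1,1),
\]
so $A$ is a Klein four-group.

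Next I would read off the $\cG$-action from the paragraph preceding the lemma: $\iota$ acts on $\wt{\LL} = \LL \otimes \KK$ by $(\lambda_1,\lambda_2,\lambda_3) \mapsto (\iota(\lambda_1), \iota(\lambda_3), \iota(\lambda_2))$. Since each coordinate of an element of $A$ lies in $\{\pm 1\} \subset \FF$ and is therefore fixed by $\iota$ pointwise, the action on $A$ is simply the transposition swapping the last two slots. In particular $a_0$ and $a_1$ are $\cG$-fixed, while $\iota(a_2) = a_3$.

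Finally, for the cohomology computation, since $\cG$ is cyclic of order $2$ we have $\mathrm{H}^1(\cG, A) = \ker(1+\iota)/\mathrm{im}(1-\iota)$, and a direct inspection of the four elements shows that both the kernel and the image coincide with $A^{\cG} = \{a_0, a_1\}$, so the quotient is trivial. Equivalently, I could observe that $A$ is a free $\FF_2[\cG]$-module of rank one (generated by $a_2$, whose $\cG$-orbit $\{a_2, a_3\}$ together with their product $a_2 a_3 = a_1$ exhausts the nontrivial elements) and invoke Shapiro's lemma. There is no real obstacle; the only point that requires any care is identifying the Galois action on $A$, and the explicit form of $\iota$ on $\wt{\LL}$ makes this immediate.
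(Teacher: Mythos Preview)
Your proof is correct and follows essentially the same approach as the paper: both use the cyclic-group formula $\mathrm{H}^1(\cG,A)=\ker(1+\iota)/\im(1-\iota)$ and observe that, since $A$ is an elementary $2$-group of rank $2$ on which $\iota$ acts nontrivially, the kernel and image of $1+\iota=1-\iota$ coincide. Your explicit enumeration of the four elements and the alternative Shapiro's lemma argument are minor elaborations on the paper's more abstract statement that this holds for any involutive $\gamma\ne 1$ on such an $A$.
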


\begin{proof}
This can be checked directly from the definition or using the following well known fact: if $\cG$ is a cyclic group of order
$m$, $A$ is an abelian group, $\gamma$ is an automorphism of $A$ of period $m$, and $A$ is made a $\cG$-module by letting 
a generator of $\cG$ act by $\gamma$, then $\mathrm{H}^k(\cG,A)=\ker(1+\gamma+\cdots+\gamma^{m-1})/\im(1-\gamma)$ 
for all odd $k$. If $m=2$ and $A$ is an elementary $2$-group of rank $2$, then $1-\gamma=1+\gamma$
and $\ker(1+\gamma)=\im(1+\gamma)$ for any involutive operator $\gamma\ne 1$.
\end{proof}

Let $E=\End_\LL(V)$ be the trialitarian algebra corresponding to $(V,\LL,\beta,Q)$ and let $\mathbf{C}=\AAut_\FF(E,\LL,\sigma,\alpha)$.
The trialitarian algebra $\wt{E}$ corresponding to $(\wt{V},\wt{\LL},\rho,*,Q)$ 
can be identified with $E\otimes\KK$, and hence $\mathbf{C}_\KK=\AAut_\KK(\wt{E},\wt{\LL},\wt{\sigma},\wt{\alpha})$. 
Note that we have a short exact sequence $\mathbf{1}\to\mathbf{A}\to\mathbf{B}\to\mathbf{C}\to\mathbf{1}$ and,
for any affine scheme $\mathbf{H}$ over $\FF$, we have the corresponding exact sequence of $\cG$-groups
(i.e., groups equipped with a $\cG$-action by automorphisms):
$
1\to\mathrm{Aff}_\KK(\mathbf{H}_\KK,\mathbf{A}_\KK)
\to\mathrm{Aff}_\KK(\mathbf{H}_\KK,\mathbf{B}_\KK)
\to\mathrm{Aff}_\KK(\mathbf{H}_\KK,\mathbf{C}_\KK).
$ 

A Type~III grading $\Gamma:E=\bigoplus_{g\in G}E_g$ extends to a Type~III grading 
$\wt{\Gamma}:\wt{E}=\bigoplus_{g\in G}(E_g\otimes\KK)$. 
The homomorphism $\eta=\eta_\Gamma\colon G^D\to\mathbf{C}$ is identified with 
$\eta_{\wt{\Gamma}}$ if we regard $\mathrm{Aff}_\FF(G^D,\mathbf{C})$ as the fixed points of $\cG$
in $\mathrm{Aff}_\KK((G^D)_\KK,\mathbf{C}_\KK)$. 

By the results of the previous subsection, we know that the fiber of the map 
$\Hom_\KK((G^D)_\KK,\mathbf{B}_\KK)\to\Hom_\KK((G^D)_\KK,\mathbf{C}_\KK)$ over $\eta$ consists of four points, 
and if $\eta'$ is one of them, then the mapping $a\mapsto(\mathrm{d}a)\eta'$ is a bijection from $A$ onto the fiber. 
Note that the action of $G^D$ on $\mathbf{A}$ is $\cG$-equivariant and hence the injection 
$\mathrm{d}\colon A\to\mathrm{Aff}_\KK((G^D)_\KK,\mathbf{A}_\KK)$
is a homomorphism of $\cG$-modules. We want to prove that the fiber contains a fixed point of $\cG$,
which would be a desired lifting of $\eta$.
To this end, consider the $\cG$-group $M=\mathrm{Aff}_\KK((G^D)_\KK,\mathbf{B}_\KK)$, 
its normal $\cG$-subgroup $N=\mathrm{Aff}_\KK((G^D)_\KK,\mathbf{A}_\KK)$, 
and the quotient $M/N$, which is embedded in $\mathrm{Aff}_\KK((G^D)_\KK,\mathbf{C}_\KK)$.
The short exact sequence $1\to N\to M\to M/N\to 1$ gives rise to the exact sequence of pointed sets
\[
1\to N^\cG\to M^\cG\to (M/N)^\cG\stackrel{\delta^0}{\longrightarrow} \mathrm{H}^1(\cG,N)\to \mathrm{H}^1(\cG,M)\to \mathrm{H}^1(\cG,M/N),
\]
where the maps preceding $\delta^0$ are group homomorphisms (see e.g. \cite[\S 28.B]{KMRT}).
We have $\eta\in(M/N)^\cG$, so we have to show that $\delta^0(\eta)=1$. Since $\eta'\in M$ is a pre-image of $\eta$,
the definition of $\delta^0$ tells us that $\delta^0(\eta)$ is the class of the $1$-cocycle $\cG\to N$ 
given by $\sigma\mapsto(\eta')^{-1}(\sigma\cdot\eta')$. There exists a unique element $a_\sigma\in A$ such that 
$\sigma\cdot\eta'=(\mathrm{d}a_\sigma)\eta'$, hence $\delta^0(\eta)$ is the class of the $1$-cocycle $\sigma\mapsto\mathrm{d}a_\sigma$.
In other words, $\delta^0(\eta)=[\mathrm{d}_*(f)]$ where $f\colon\cG\to A$ is the map $\sigma\mapsto a_\sigma$ and $\mathrm{d}_*$ 
denotes the map $\mathrm{Fun}(\cG,A)\to\mathrm{Fun}(\cG,N)$ given by post-composition with the injection $\mathrm{d}\colon A\to N$. 
(This map is a part of the homomorphism of cochain complexes of $\cG$-modules induced by $\mathrm{d}$.)
Since $\mathrm{d}_*(f)$ is a $1$-cocycle, so is $f$. But $\mathrm{H}^1(\cG,A)=1$ by Lemma \ref{lm:trivial_Galois_cohom}, hence $f$ is a coboundary.
Therefore, $\mathrm{d}_*(f)$ is a coboundary, as required.

We have proved that $\eta$ has a lifting $\eta'$ over $\FF$. Then all such liftings have the form $(\mathrm{d}a)\eta'$ where $a$ is a fixed point in $A$,
or, in other words, $a\in\mathbf{A}(\FF)$. To summarize:

\begin{theorem}\label{th:real_tri_to_comp}
Let $\FF$ be a real closed field, $\KK=\FF[\sqrt{-1}]$, and $\LL=\FF\times\KK$.
Let $(V,\LL,\beta,Q)$ be a twisted composition of rank $8$ and let $E=\End_\LL(V)$ be the corresponding trialitarian 
algebra. Suppose that $E$ is given a Type III grading by an abelian group $G$. Then there exist exactly two 
$G$-gradings on the twisted composition $V$ that induce the given grading on $E$. If $V=\bigoplus_{g\in G}V_g$ is one 
of them, then the other is $V=\bigoplus_{g\in G}{aV_g}$ where $a=(1,-1)\in\LL$ is the unique nontrivial element of the center of the group $\Spins(V,\LL,\beta,Q)(\FF)$.\qed  
\end{theorem}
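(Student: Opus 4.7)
The plan is to assemble the theorem directly from the machinery developed in the paragraphs preceding the statement, which already carry out most of the Galois-descent argument in the form $\eta \mapsto \eta'$; what remains is to package the conclusion and count the liftings.

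First, I would start by recording that the Type III grading $\Gamma$ on $E$ extends to a Type III grading $\wt{\Gamma}$ on $\wt{E}=E\otimes\KK$, which corresponds to a cyclic composition $(\wt V,\wt\LL,\rho,*,Q)$ over the algebraic closure. The existence/enumeration results of Section~5.1 (building on \cite[\S 4.2]{EK15} together with Lemma~\ref{lm:FxK} to absorb the similitude parameter) yield a homomorphism $\eta'_\KK\colon (G^D)_\KK\to\mathbf{B}_\KK$ lifting $\eta_\KK$, where $\mathbf{B}=\AAut_\FF(V,\LL,\beta,Q)$. This establishes a lift \emph{over the algebraic closure}, but a priori not over $\FF$.

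Next, I would invoke the exact sequence of pointed $\cG$-sets associated to $1\to N\to M\to M/N\to 1$, with $M=\mathrm{Aff}_\KK((G^D)_\KK,\mathbf{B}_\KK)$ and $N=\mathrm{Aff}_\KK((G^D)_\KK,\mathbf{A}_\KK)$, exactly as in the paragraph before the theorem. The class $\delta^0(\eta)\in\mathrm{H}^1(\cG,N)$ is represented by $\sigma\mapsto\mathrm{d}a_\sigma$ for a $1$-cocycle $f\colon\sigma\mapsto a_\sigma$ with values in $A=\mathbf{A}(\KK)$, and Lemma~\ref{lm:trivial_Galois_cohom} ensures $f$ is a coboundary. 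Pushing through $\mathrm{d}_*$ gives $\delta^0(\eta)=1$, so a $\cG$-fixed lifting $\eta'$ exists; by Galois descent this $\eta'$ is induced by an honest $G$-grading on the twisted composition $V$ over $\FF$.

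For the counting, the fibre of $M^\cG\to (M/N)^\cG$ over $\eta$ is a torsor under $\mathbf{A}(\FF)$: combining the algebraic-closure uniqueness statement (there are exactly four liftings over $\KK$, parametrized by $A$ via $a\mapsto(\mathrm{d}a)\eta'$) with the vanishing of $\mathrm{H}^1(\cG,A)$ yields a bijection between $\FF$-liftings and $\mathbf{A}(\FF)$. So I must compute $\mathbf{A}(\FF)$. Since $\iota$ acts on $\wt\LL\simeq\KK\times\KK\times\KK$ by $(\lambda_1,\lambda_2,\lambda_3)\mapsto(\iota(\lambda_1),\iota(\lambda_3),\iota(\lambda_2))$, the $\cG$-fixed elements of $A=\{(\lambda_1,\lambda_2,\lambda_3)\in\{\pm1\}^3\mid\lambda_1\lambda_2\lambda_3=1\}$ are precisely those with $\lambda_2=\lambda_3\in\{\pm1\}$ (whence automatically $\lambda_1=1$), giving two elements. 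Under the identification $\wt\LL=\LL\otimes\KK$ coming from the descent presentation of $\LL=\FF\times\KK$, the nontrivial fixed point $(1,-1,-1)\in\wt\LL$ descends to $a=(1,-1)\in\LL$. Translating the action $\eta'\mapsto(\mathrm{d}a)\eta'$ back to the grading, one directly verifies (from the formula~\eqref{eq:def_eta}) that multiplying the grading by $a$ replaces $V_g$ by $aV_g$, producing precisely the second grading described.

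The only step that deserves any genuine care is the computation of $\mathbf{A}(\FF)$ together with the bookkeeping identifying the $\wt\LL$-element $(1,-1,-1)$ with the $\LL$-element $(1,-1)$ via the Galois-descent identification used to build $\TC(\wb{\cC},\LL)$. Everything else is either recorded verbatim in the preceding paragraphs or follows from Lemmas~\ref{lm:FxK} and~\ref{lm:trivial_Galois_cohom}.
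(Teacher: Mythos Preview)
Your proposal is correct and follows essentially the same route as the paper: extend to $\KK$, use the algebraically-closed lifting result together with Lemma~\ref{lm:FxK}, then run the Galois-descent argument via the exact sequence of pointed $\cG$-sets and Lemma~\ref{lm:trivial_Galois_cohom} to obtain an $\FF$-lifting, and finally count the $\cG$-fixed liftings. You add the explicit computation of $\mathbf{A}(\FF)$ and the identification of $(1,-1,-1)\in\wt{\LL}$ with $(1,-1)\in\LL$, which the paper leaves implicit in the theorem statement; one minor remark is that once a $\cG$-fixed $\eta'$ is in hand, the counting follows directly from the injectivity and $\cG$-equivariance of $\mathrm{d}\colon A\to N$ (so the $\FF$-liftings are parametrized by $A^\cG$), and the vanishing of $\mathrm{H}^1(\cG,A)$ is not needed again at that stage.
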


\subsection{Reduction of the classification of Type III gradings from trialitarian algebras to twisted composition algebras}

Since we want to classify gradings, there remains the question of the relationship between isomorphism 
of twisted compositions $(V,\LL,\beta,Q)$ and $(V',\LL',\beta',Q')$ equipped with Type~III gradings by $G$ 
and isomorphism of the corresponding $G$-graded trialitarian algebras $E=\End_\LL(V)$ and $E'=\End_{\LL'}(V')$.
We will treat this question for an arbitrary ground field $\FF$ (of characteristic different from $2,3$), 
but assuming that $\LL$ and $\LL'$ are not fields. Clearly, it is necessary for $\LL$ and $\LL'$ 
to be isomorphic as ungraded algebras, so we have two cases to consider: either $\LL$ and $\LL'$ are isomorphic to 
$\FF\times\FF\times\FF$ (type ${}^1D_4$) or $\LL$ and $\LL'$ are isomorphic to $\FF\times\KK$ where $\KK$ is a 
quadratic field extension of $\FF$ (type ${}^2D_4$).

An isomorphism $(\varphi_1,\varphi_0)\colon(V,\LL)\to(V',\LL')$ induces an algebra isomorphism 
$\varphi\colon\End_\LL(V)\to\End_{\LL'}(V')$ by the formula $\varphi(u)=\varphi_1 u\varphi_1^{-1}$ 
for all $u\in\End_\LL(V)$. If $(\varphi_1,\varphi_0)$ is a similitude $(V,\LL,\beta,Q)\to(V',\LL',\beta',Q')$
then $\varphi$ is an isomorphism of trialitarian algebras $(E,\LL,\rho,\sigma,\alpha)\to(E',\LL',\rho',\sigma',\alpha')$.
Note that $\rho$ and $\rho'$ are chosen arbitrarily and do not affect the set of isomorphisms.

In the case ${}^1D_4$, Type~III gradings do not exist unless $\FF$ contains a primitive cube root of unity.  
The next result is a restatement of \cite[Theorem 13]{EK15} in terms of twisted compositions 
rather than cyclic compositions.

\begin{theorem}\label{th:tri_isomorphism_1D4}
Let $(V,\LL,\beta,Q)$ and $(V',\LL',\beta',Q')$ be two twisted compositions of rank $8$ 
where $\LL=\FF\times\FF\times\FF$ and $\FF$ is a field of characteristic different from $2$ 
containing a primitive cube root of unity. 
Suppose $V$ and $V'$ are given Type~III gradings by an abelian group $G$. 
Let $E=\End_\LL(V)$ and $E'=\End_{\LL'}(V')$ be the corresponding trialitarian algebras with induced $G$-gradings. 
Then the mapping $(\varphi_1,\varphi_0)\mapsto\varphi$ is a bijection from the set of isomorphisms $V\to V'$ 
of graded twisted compositions onto the set of isomorphisms $E\to E'$ of graded trialitarian algebras.\qed
\end{theorem}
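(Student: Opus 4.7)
The plan is to reduce this statement to the corresponding result for cyclic compositions, \cite[Theorem~13]{EK15}. Since $\LL=\FF\times\FF\times\FF$ is Galois over $\FF$ with group $A_3\subset S_3\simeq\Aut_\FF(\LL)$, and similarly for $\LL'$, \cite[Proposition~36.12]{KMRT} produces, for each $3$-cycle $\rho\in A_3$, a unique cyclic composition structure $(V,\LL,\rho,*_\rho,Q)$ with $\beta(v)=v*_\rho v$, and the two choices are opposite: $*_{\rho^{-1}}=(*_\rho)^{\op}$. A twisted composition isomorphism $(\varphi_1,\varphi_0)\colon(V,\LL,\beta,Q)\to(V',\LL',\beta',Q')$ restricts to an $\FF$-algebra isomorphism $\varphi_0\colon\LL\to\LL'$, which after fixing identifications corresponds to an element of $S_3$; depending on whether $\varphi_0\in A_3$ or not, $(\varphi_1,\varphi_0)$ becomes an isomorphism of cyclic compositions for the pair of $3$-cycles $(\rho,\rho)$ or $(\rho,\rho^{-1})$, respectively.

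First I would check that the assignment $(\varphi_1,\varphi_0)\mapsto\varphi$ lands in the set of isomorphisms of graded trialitarian algebras. Grading preservation and $\varphi\sigma=\sigma'\varphi$ are immediate from the definitions. The compatibility $\alpha'\circ\Cl(\varphi)=(\varphi\otimes\delta)\circ\alpha$ is verified by computing with one of the two cyclic composition products: when $\varphi_0\in A_3$ one uses $*_\rho$ on both sides, and when $\varphi_0\notin A_3$ one pairs $*_\rho$ on $V$ with $*_{\rho^{-1}}$ on $V'$; in either case $\varphi_1$ intertwines the two products, so the explicit formula for $\alpha$ in Definition~\ref{df:tri_alg} propagates through.

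For surjectivity, let $\varphi\colon(E,\LL,\sigma,\alpha)\to(E',\LL',\sigma',\alpha')$ be an isomorphism of graded trialitarian algebras and let $\varphi_0=\varphi|_{\LL}\colon\LL\to\LL'$. Fix a $3$-cycle $\rho$ on the source side, and choose $\rho'$ on the target side so that $\varphi_0\rho=\rho'\varphi_0$, i.e.\ $\rho'=\rho$ or $\rho'=\rho^{-1}$ according to $\varphi_0$. Then $\varphi$ becomes an isomorphism of graded trialitarian algebras in their cyclic composition presentations, and \cite[Theorem~13]{EK15} delivers a unique isomorphism of graded cyclic compositions $(V,\LL,\rho,*_\rho,Q)\to(V',\LL',\rho',*_{\rho'},Q')$ inducing $\varphi$; this isomorphism is \emph{a fortiori} a graded isomorphism of the underlying twisted compositions.

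Injectivity follows by a centre-chasing argument analogous to the one in Section~\ref{s:lifting}: if $(\varphi_1,\varphi_0)$ and $(\varphi'_1,\varphi'_0)$ induce the same $\varphi$, then restricting to the centre forces $\varphi_0=\varphi'_0$, so $\psi:=(\varphi'_1)^{-1}\varphi_1$ is $\LL$-linear and central in $E=\End_\LL(V)$, hence equal to multiplication by some $\ell\in\LL^\times$; the conditions that $\psi$ intertwine $\beta$ with itself and $Q$ with itself yield $\ell^\sharp=\ell$ and $\ell^2=1$, while preservation of the $G$-grading combined with the classification of the four graded liftings from Section~\ref{s:lifting} forces $\ell=1$. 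The main obstacle will be the bookkeeping in the two cases $\varphi_0\in A_3$ and $\varphi_0\notin A_3$: one must check that the cyclic composition chosen on $V'$ is consistent with $\varphi_0$ and that the ambiguity $*_\rho\leftrightarrow*_{\rho^{-1}}$, which is invisible from the trialitarian side, is correctly absorbed into the choice of presentation, so that the uniqueness in \cite[Theorem~13]{EK15} transfers intact to twisted compositions.
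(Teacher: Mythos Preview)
Your approach is essentially the same as the paper's: the paper simply declares this theorem to be ``a restatement of \cite[Theorem~13]{EK15} in terms of twisted compositions rather than cyclic compositions'' and provides no further argument (note the \verb|\qed| immediately after the statement). You are filling in the details of that translation, and your outline is correct.

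One minor redundancy: having shown that graded twisted-composition isomorphisms correspond bijectively to graded cyclic-composition isomorphisms (for the appropriate pairing of $3$-cycles), both injectivity and surjectivity follow at once from \cite[Theorem~13]{EK15}; your separate centre-chasing argument for injectivity is therefore not needed, though it is correct. In that argument, the appeal to ``the classification of the four graded liftings'' can be replaced by the direct observation that $\ell\in A\cap\LL_e=\{1\}$, since for a Type~III grading $\LL_e=\FF\cdot 1$ and the only scalar in $A$ is $1$.
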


Combining this result with \cite[Theorem 12]{EK15} mentioned earlier, we obtain:

\begin{corollary}
Let $\LL=\FF\times\FF\times\FF$ where $\FF$ is an algebraically closed field, $\chr{\FF}\ne 2,3$, 
and fix a $3$-cycle $\rho$ permuting the components of $\LL$. Then the functor sending
$(V,\LL,\beta,Q)\mapsto\big(\End_\LL(V),\LL,\rho,\sigma,\alpha\big)$ and $(\varphi_1,\varphi_0)\mapsto\varphi$
is an equivalence from the groupoid of twisted compositions of rank $8$ equipped with a Type~III grading by an 
abelian group $G$ to the groupoid of trialitarian algebras equipped with a Type~III grading by $G$.\qed
\end{corollary}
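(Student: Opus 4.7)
The plan is to verify the two standard conditions for an equivalence of groupoids: full faithfulness on morphisms and essential surjectivity on objects. Both will be obtained by assembling results already established earlier in the paper.

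For full faithfulness, I would invoke Theorem~\ref{th:tri_isomorphism_1D4} directly, applied with both $\LL$ and $\LL'$ equal to $\FF\times\FF\times\FF$. Since $\FF$ is algebraically closed with $\chr{\FF}\ne 2,3$, the polynomial $x^3-1$ factors as $(x-1)(x^2+x+1)$ with discriminant $-3\ne 0$, so it has three distinct roots and $\FF$ contains a primitive cube root of unity; hence the hypotheses of that theorem are satisfied. It follows that for any two Type~III graded twisted compositions $(V,\LL,\beta,Q)$ and $(V',\LL,\beta',Q')$, the assignment $(\varphi_1,\varphi_0)\mapsto\varphi$ is a bijection from isomorphisms of graded twisted compositions onto isomorphisms of the associated graded trialitarian algebras $(\End_\LL(V),\LL,\rho,\sigma,\alpha)$ and $(\End_\LL(V'),\LL,\rho,\sigma',\alpha')$.

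For essential surjectivity, let $(E,\LL,\rho,\sigma,\alpha)$ be a Type~III $G$-graded trialitarian algebra. As recalled in the Introduction, over an algebraically closed field $\FF$ with $\chr{\FF}\ne 2$ there is, up to ungraded isomorphism, only one trialitarian algebra with center $\FF\times\FF\times\FF$, namely the one realized by $V_0=\wb{\cC}\ot\LL$, where $(\wb{\cC},\bullet,n)$ is the para-Hurwitz algebra attached to the unique Cayley algebra $\cC$ over $\FF$. Thus I may fix an ungraded isomorphism $\psi\colon E\to\End_\LL(V_0)$ of trialitarian algebras and transport the $G$-grading along $\psi$ to obtain a Type~III $G$-grading on $\End_\LL(V_0)$. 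Now I would apply the lifting result from the algebraically closed subsection of Section~\ref{s:lifting}, which states that any Type~III $G$-grading on $\End_\LL(V_0)$ is induced by a $G$-grading on the twisted composition $V_0$. The resulting graded twisted composition $V$ satisfies $\End_\LL(V)\simeq E$ as graded trialitarian algebras via $\psi^{-1}$, providing the required preimage.

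The two steps are really just an organizational repackaging of Theorem~\ref{th:tri_isomorphism_1D4} and the lifting result, both of which do the substantive work and are already available. Consequently, no genuinely new obstacle arises here; the only small point deserving attention is that the lifting result is phrased for a fixed $V$ rather than for an abstract trialitarian algebra, which is why one first uses the uniqueness of the ungraded trialitarian algebra of type ${}^1D_4$ over algebraically closed $\FF$ to reduce to the form $E=\End_\LL(V_0)$ before lifting.
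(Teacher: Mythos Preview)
Your proposal is correct and follows essentially the same approach as the paper: full faithfulness from Theorem~\ref{th:tri_isomorphism_1D4} and essential surjectivity from the lifting result of \cite[Theorem~12]{EK15} discussed in the ``Lifting over algebraically closed fields'' subsection. The only difference is that you spell out explicitly the reduction of an arbitrary trialitarian algebra to the form $\End_\LL(V_0)$ via uniqueness over an algebraically closed field, which the paper leaves implicit.
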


We now turn to the case ${}^2D_4$. Since $\LL\otimes\KK\simeq\KK\times\KK\times\KK$, the existence of Type~III 
gradings requires that $\KK$ contain a primitive cube root of unity.

\begin{theorem}\label{th:tri_isomorphism_2D4}
Let $(V,\LL,\beta,Q)$ and $(V',\LL',\beta',Q')$ be two twisted compositions of rank $8$ 
where $\LL=\FF\times\KK$ and $\KK\supset\FF$ is a quadratic field extension of characteristic different from $2$ 
such that $\KK$ contains a primitive cube root of unity. 
Suppose $V$ and $V'$ are given Type~III gradings by an abelian group $G$. 
Let $E=\End_\LL(V)$ and $E'=\End_{\LL'}(V')$ be the corresponding trialitarian algebras with induced $G$-gradings. 
Then the mapping $(\varphi_1,\varphi_0)\mapsto\varphi$ is a bijection from the set of isomorphisms $V\to V'$ 
of graded twisted compositions onto the set of isomorphisms $E\to E'$ of graded trialitarian algebras.
\end{theorem}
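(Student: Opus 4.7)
The plan is to reduce Theorem \ref{th:tri_isomorphism_2D4} to the ${}^1D_4$ version, Theorem \ref{th:tri_isomorphism_1D4}, by Galois descent along the quadratic extension $\KK/\FF$. Let $\cG=\Gal(\KK/\FF)=\langle\iota\rangle$ and, by extension of scalars, write $\widetilde V=V\otimes\KK$, $\widetilde V'=V'\otimes\KK$, $\widetilde\LL=\LL\otimes\KK\simeq\KK\times\KK\times\KK$, $\widetilde\LL'\simeq\KK\times\KK\times\KK$, and $\widetilde E=E\otimes\KK$, $\widetilde E'=E'\otimes\KK$. The induced structures are twisted compositions of rank $8$ with Type~III $G$-gradings, and $\widetilde E\simeq\End_{\widetilde\LL}(\widetilde V)$, $\widetilde E'\simeq\End_{\widetilde\LL'}(\widetilde V')$ as $G$-graded trialitarian algebras of Type~III. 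Since $\KK$ contains a primitive cube root of unity, Theorem \ref{th:tri_isomorphism_1D4} applies over $\KK$ and yields a bijection
\[
\Phi\colon\bigl\{\text{graded isos }(\widetilde\varphi_1,\widetilde\varphi_0)\colon\widetilde V\to\widetilde V'\bigr\}\;\xrightarrow{\sim}\;\bigl\{\text{graded isos }\widetilde\varphi\colon\widetilde E\to\widetilde E'\bigr\}
\]
given by conjugation, $\Phi(\widetilde\varphi_1,\widetilde\varphi_0)(u)=\widetilde\varphi_1\,u\,\widetilde\varphi_1^{-1}$.

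Next I would verify that $\Phi$ is $\cG$-equivariant. Galois descent identifies $V$ with $\widetilde V^\cG$ (and $\LL$, $\beta$, $Q$ likewise), so $\cG$ acts naturally on both the source and target of $\Phi$ by conjugation on each factor. Because the formula defining $\Phi$ is $\FF$-algebraic in $\widetilde\varphi_1$, it commutes with $\iota$, giving $\iota\cdot\Phi(\widetilde\varphi_1,\widetilde\varphi_0)=\Phi\bigl(\iota\cdot(\widetilde\varphi_1,\widetilde\varphi_0)\bigr)$. Hence $\Phi$ restricts to a bijection on $\cG$-fixed points.

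To finish, I would invoke the standard Galois descent lemma from Section \ref{s:preliminaries}: an isomorphism of graded twisted compositions $V\to V'$ over $\FF$ is the same as a $\cG$-fixed isomorphism of graded twisted compositions $\widetilde V\to\widetilde V'$ over $\KK$, and similarly for the graded trialitarian algebras. The $\cG$-fixed part of $\Phi$ is therefore precisely the map $(\varphi_1,\varphi_0)\mapsto\varphi$ asserted in the theorem, and the bijectivity of the restricted map follows from the bijectivity of $\Phi$ together with its $\cG$-equivariance.

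The main obstacle is surjectivity: given an $\FF$-defined isomorphism $\varphi\colon E\to E'$, the preimage $(\widetilde\varphi_1,\widetilde\varphi_0)=\Phi^{-1}(\widetilde\varphi)$ is a priori only defined over $\KK$, and one has to show it descends to $\FF$. Here the crucial use of Theorem \ref{th:tri_isomorphism_1D4} is the \emph{uniqueness} half: since $\widetilde\varphi=\iota\cdot\widetilde\varphi$, the pair $\iota\cdot(\widetilde\varphi_1,\widetilde\varphi_0)$ is another preimage of $\widetilde\varphi$ under $\Phi$ and hence must coincide with $(\widetilde\varphi_1,\widetilde\varphi_0)$, so the pair is $\cG$-fixed and descends. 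Injectivity is easier: two $\FF$-pairs with the same image $\varphi$ extend to two $\KK$-pairs with the same image $\widetilde\varphi$ and must agree by the ${}^1D_4$ bijection, hence already agree over $\FF$.
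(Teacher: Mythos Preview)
Your proposal is correct and follows essentially the same approach as the paper's proof: extend scalars to $\KK$, apply Theorem~\ref{th:tri_isomorphism_1D4} over $\KK$, and use the uniqueness half of that bijection to guarantee that the preimage of a $\cG$-fixed $\widetilde\varphi$ is itself $\cG$-fixed and hence descends to $\FF$. The paper's proof is terser but makes exactly this argument; your write-up simply spells out the $\cG$-equivariance of $\Phi$ and the descent step in more detail.
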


\begin{proof}
It is clear that the mapping is well defined. To prove that a given $\varphi\colon E\to E'$
has a unique pre-image, we extend scalars from $\FF$ to $\KK$, apply Theorem \ref{th:tri_isomorphism_1D4} over $\KK$,
and observe that the uniqueness of $(\varphi_1,\varphi_0)$ ensures that it descends to $\FF$.
\end{proof}

\begin{corollary}
Let $\LL=\FF\times\KK$ where $\FF$ is a real closed field and $\KK=\FF[\sqrt{-1}]$, 
and fix a $3$-cycle $\rho$ permuting the components of $\LL\otimes\KK\simeq\KK\times\KK\times\KK$. 
Then the functor sending
$(V,\LL,\beta,Q)\mapsto\big(\End_\LL(V),\LL,\rho,\sigma,\alpha\big)$ and $(\varphi_1,\varphi_0)\mapsto\varphi$
is an equivalence from the groupoid of twisted compositions of rank $8$ equipped with a Type~III grading by an 
abelian group $G$ to the groupoid of trialitarian algebras equipped with a Type~III grading by $G$.
\end{corollary}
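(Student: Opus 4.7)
The plan is to verify the functor is fully faithful and essentially surjective; both halves are essentially packaging of already-proven results, so the proof is short.

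Full faithfulness is immediate from Theorem \ref{th:tri_isomorphism_2D4}. Its hypothesis is satisfied here: $\FF$ is real closed and $\KK=\FF[\sqrt{-1}]$ is the algebraic closure, so $\KK$ certainly contains a primitive cube root of unity. Thus the map $(\varphi_1,\varphi_0)\mapsto\varphi$ is a bijection from isomorphisms of graded twisted compositions $V\to V'$ onto isomorphisms of graded trialitarian algebras $\End_\LL(V)\to\End_{\LL'}(V')$, which is precisely full faithfulness of the functor on morphisms.

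For essential surjectivity, given any graded trialitarian algebra $(E,\LL,\rho,\sigma,\alpha)$ carrying a Type III $G$-grading, I need to produce a graded twisted composition $V$ such that $\End_\LL(V)\simeq E$ as graded trialitarian algebras. The strategy is: first forget the grading and show $E\simeq\End_\LL(V_0)$ for some ungraded twisted composition $V_0$; then lift the grading from $E$ to $V_0$. For the first step, as reviewed in the Introduction, over a real closed $\FF$ with $\LL=\FF\times\KK$ the trialitarian algebra has the form $E\simeq A\times\Cl(A,\sigma_A)$ with $A=M_8(\FF)$, and since $\KK$ is algebraically closed every central simple $\KK$-algebra of degree $8$ is $M_8(\KK)$, so $\Cl(A,\sigma_A)\simeq M_8(\KK)$. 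Consequently the Brauer class $[E]$ in $\Br(\LL)$ is trivial, and by \cite[Proposition 44.16]{KMRT} (cited in Section \ref{s:preliminaries}) we have $E\simeq\End_\LL(V_0)$ for some rank $8$ twisted composition $V_0$. Transporting the Type III grading on $E$ along this isomorphism endows $\End_\LL(V_0)$ with a Type III $G$-grading, and then Theorem \ref{th:real_tri_to_comp} produces two $G$-gradings on $V_0$ that induce the given grading on $\End_\LL(V_0)$. Either choice yields the required graded preimage $V$.

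The only potentially subtle point, and hence the main step, is the reduction to the trivial-Brauer-class case $E=\End_\LL(V_0)$; once this is in place, the essentially surjective part is a direct application of Theorem \ref{th:real_tri_to_comp}, while the fully faithful part is a direct application of Theorem \ref{th:tri_isomorphism_2D4}. The genuinely hard analytic work (the $1$-cocycle computation and Galois descent establishing the existence of lifts) has already been carried out in Section \ref{s:lifting}, so no new obstacle arises at the level of the corollary.
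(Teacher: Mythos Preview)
Your proof is correct and follows essentially the same approach as the paper: full faithfulness via Theorem~\ref{th:tri_isomorphism_2D4}, and essential surjectivity by first observing that any trialitarian algebra admitting a Type~III grading is of the form $\End_\LL(V)$ and then applying Theorem~\ref{th:real_tri_to_comp} to lift the grading. The only difference is cosmetic: the paper simply refers back to the Introduction for the fact that such $E$ must be $\End_\LL(V)$, whereas you spell out the Brauer-class argument (using that $\KK$ is algebraically closed so $\Cl(A,\sigma_A)\simeq M_8(\KK)$), which is a perfectly valid and slightly more self-contained justification of the same point.
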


\begin{proof}
As explained in the Introduction, Type~III gradings can exist only on trialitarian algebras isomorphic to 
$\End_\LL(V)$ for some twisted composition $V$. It remains to apply Theorems \ref{th:real_tri_to_comp} and \ref{th:tri_isomorphism_2D4}.
\end{proof}

To finish this section, we consider fine gradings over a real closed field $\FF$. 
Fine gradings correspond to maximal diagonalizable subgroupschemes
in the appropriate automorphism group scheme, i.e., subgroupschemes that are maximal with respect to the 
property of being isomorphic to $G^D$ for some (finitely generated) abelian group $G$, which turns out to be 
the universal group of the corresponding grading (see \cite[Chapter 1]{EKmon}). 
Two fine gradings are equivalent if and only if the corresponding diagonalizable subgroupschemes belong 
to the same orbit of the inner action of the automorphism group (i.e., the group of $\FF$-points of the automorphism
group scheme).

Clearly, any refinement of a  Type~III grading is also a Type~III grading, hence Theorems \ref{th:real_tri_to_comp} 
and \ref{th:tri_isomorphism_2D4} imply that a fine Type~III grading on a twisted composition algebra $V$, 
realized over its universal group $G$, induces a fine Type~III grading on 
the trialitarian algebra $E=\End_\LL(V)$, for which $G$ is also the universal group. 
The following result is the analog for real closed fields of \cite[Theorem 15]{EK15}, with a completely analogous proof:

\begin{theorem}\label{th:real_tri_equivalence}
Let $\LL=\FF\times\KK$ where $\FF$ is a real closed field and $\KK=\FF[\sqrt{-1}]$, let $(V,\LL,\beta,Q)$ be 
a twisted composition of rank $8$, and let $E=\End_\LL(V)$ be the corresponding trialitarian algebra.
Then the mapping sending a fine grading on $V$, realized over its universal group, 
to the induced grading on $E$, gives a one-to-one correspondence 
between the equivalence classes of Type~III fine gradings on $V$ 
and the equivalence classes of Type~III fine gradings on $E$. This correspondence preserves universal groups.
\end{theorem}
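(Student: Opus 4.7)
The plan is to transfer the argument of \cite[Theorem 15]{EK15}, which establishes the analogous statement over algebraically closed fields, to the real closed setting. The main substitutions are Theorem \ref{th:real_tri_to_comp} (lifting of Type~III gradings from $E$ to $V$, with exactly two lifts) in place of its counterpart from \cite{EK15}, and Theorem \ref{th:tri_isomorphism_2D4} (isomorphisms of graded twisted compositions and of graded trialitarian algebras are in bijection) for the uniqueness part. Three points must be verified: (a) the map is well-defined---a fine Type~III grading on $V$ realized over its universal group $G$ induces a fine Type~III grading on $E$ whose universal group is also $G$; (b) every fine Type~III grading on $E$ arises in this way; (c) the correspondence is injective on equivalence classes.

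For (a), let $\Gamma\colon V=\bigoplus_{g\in G}V_g$ be a fine grading realized over its universal group, and let $\tilde\Gamma$ be the induced grading on $E$. If $\tilde\Gamma$ had a proper refinement $\tilde\Gamma'$ by a group $G'$, this refinement would again be of Type~III and, by Theorem \ref{th:real_tri_to_comp}, would lift to a $G'$-grading $\Gamma'$ on $V$; the coarsening of $\Gamma'$ to $G$ would equal $\Gamma$ or $a\Gamma$, where $a=(1,-1)\in\LL^\times$. Since multiplication by $a$ is an $\LL$-linear automorphism of $(V,\beta,Q)$ (because $a^\sharp=a$ and $a^2=1$), the two possibilities are equivalent as graded twisted compositions, and in either case $\Gamma'$ would be a proper refinement of a grading equivalent to $\Gamma$, contradicting fineness. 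Item (b) follows by applying (a) to a lift given by Theorem \ref{th:real_tri_to_comp} of a fine Type~III grading on $E$: the lift is fine (by a symmetric argument using Theorem \ref{th:real_tri_to_comp}), and by (a) its induced grading on $E$ has universal group equal to that of the lift; comparison with the original universal group of $\tilde\Gamma$, which is intrinsic, forces them to agree.

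The preservation of the universal group in (a) reduces to the scheme-theoretic equality $\eta'(G^D)\cap\mathbf{A}=\mathbf{1}$, where $\eta'\colon G^D\hookrightarrow \mathbf{B}=\AAut_\FF(V,\LL,\beta,Q)$ corresponds to $\Gamma$ and $\mathbf{A}=\ker(\mathrm{Int}\colon\mathbf{B}\to\mathbf{C}=\AAut_\FF(E,\LL,\sigma,\alpha))$. Since $\mathbf{A}(\FF)=\{1,a\}$ and the only diagonalizable subgroupschemes of $\mathbf{A}$ over $\FF$ are $\mathbf{1}$ and $\langle a\rangle$, it suffices to verify $a\notin\eta'_\FF(G^D)$. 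If $a$ were in this image, it would preserve each $V_g$ and act on it by an $\FF$-scalar; however, $a$ acts on $V$ by $\LL$-multiplication by $(1,-1)\in\LL\setminus\FF$, and the Type~III hypothesis forces a nontrivial $\ZZ/3$-grading $\LL=\FF\oplus\FF\xi\oplus\FF\xi^2$ with $\xi=(1,\omega)$, so no nonzero $V_g$ is $\LL$-stable (since $\LL_\xi V_g\subset V_{g\xi}$ is a nonzero subspace lying outside $V_g$). Decomposing $(1,-1)$ into its homogeneous $\LL$-components---all three of which are nonzero---then shows $(1,-1)V_g\not\subset V_g$, the desired contradiction.

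Finally, (c) is immediate from Theorem \ref{th:tri_isomorphism_2D4}: the equivalence of the induced gradings on $E$, after relabeling by the isomorphism of their universal groups, becomes an isomorphism of graded trialitarian algebras, which lifts uniquely to an isomorphism of graded twisted compositions and hence yields an equivalence of the original gradings on $V$. The main obstacle in this plan is the preservation of the universal group, where the Type~III hypothesis is essential: it is precisely the non-triviality of the induced $\ZZ/3$-grading on $\LL$ that prevents the homogeneous components of $\Gamma$ from being $\LL$-stable and thereby rules out $a\in\eta'_\FF(G^D)$.
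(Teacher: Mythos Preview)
Your proof is correct and reaches the same conclusion as the paper, but the route differs in two notable respects.

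\textbf{Organization.} The paper argues in the direction $E\to V$: given a fine Type~III grading on $E$ with universal group $G$, it lifts to $V$ via Theorem~\ref{th:real_tri_to_comp} and then shows directly that the lift is fine by a maximality argument on diagonalizable subgroupschemes. You instead argue $V\to E$ first (your part~(a)), establishing both fineness and preservation of the universal group, and then handle the reverse direction (your part~(b)) somewhat elliptically. Your ``symmetric argument'' in~(b) is underspecified as written; the clean way to complete it is to observe that your key step (trivial intersection with $\mathbf{A}$) applies to \emph{any} Type~III diagonalizable subgroupscheme of $\mathbf{B}$, which then yields the paper's maximality argument verbatim.

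\textbf{The key step.} Both proofs must show that a Type~III diagonalizable subgroupscheme $\Qs'\subset\mathbf{B}$ satisfies $\Qs'\cap\mathbf{A}=\mathbf{1}$. The paper passes to $\KK$-points: the Type~III hypothesis furnishes an element of $\Qs'(\KK)$ whose inner action is a $3$-cycle on the three nontrivial elements of $\mathbf{A}(\KK)\simeq\ZZ_2^2$, so no nontrivial element of $\mathbf{A}(\KK)$ can lie in the abelian group $\Qs'(\KK)$. You instead work over $\FF$: you note that the only nontrivial diagonalizable subgroupscheme of $\mathbf{A}$ is $\langle a\rangle$ with $a=(1,-1)$, and rule out $a\in\Qs'(\FF)$ by decomposing $a$ in the $\ZZ_3$-graded algebra $\LL=\FF\oplus\FF\xi\oplus\FF\xi^2$ and observing that multiplication by $a$ cannot preserve any nonzero $V_g$. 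Your argument is more computational but entirely self-contained over $\FF$; the paper's is shorter and more conceptual but requires passing to $\KK$. Both are valid and exploit the Type~III hypothesis in essentially the same way (the order-$3$ structure is incompatible with the $2$-group $\mathbf{A}$).

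One small point: in~(a), when you refine $\tilde\Gamma$ to $\tilde\Gamma'$ and then speak of ``the coarsening of $\Gamma'$ to $G$'', you should remark that the needed homomorphism $G'\to G$ exists because $\tilde\Gamma$ is a coarsening of $\tilde\Gamma'$ (take $G'$ to be the universal group of $\tilde\Gamma'$ and compose with the canonical map to~$G$).
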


\begin{proof}
Theorem \ref{th:tri_isomorphism_2D4} implies that two Type~III fine gradings on $V$ 
are equivalent if and only if the induced gradings on $E$ are equivalent.
Let $\Gamma$ be a Type~III fine grading on $E$ with universal group $G$. 
It is determined by a maximal diagonalizable subgroupscheme $\Qs$ of $\AAut_\FF(E,\LL,\rho,\sigma,\alpha)$, 
which we may identify with $G^D$. By Theorem \ref{th:real_tri_to_comp}, we can induce $\Gamma$ from a Type~III 
$G$-grading $\Gamma'$ on $V$. We have to prove that $\Gamma'$ is fine. 
Let $\Qs'$ be the corresponding diagonalizable subgroupscheme of $\AAut_\FF(V,\LL,\beta,Q)$. 
Then the homomorphism $\mathrm{Int}$ restricts to an isomorphism $\Qs'\to\Qs$ (see Diagram \eqref{eq:eta_diag}).  
Assume that $\Qs'$ is not maximal. Then there exists diagonalizable $\tilde{\Qs}'$ properly containing $\Qs'$. 
The image $\mathrm{Int}(\tilde{\Qs}')$ is necessarily $\Qs$ because the latter is maximal. 
We will obtain a contradiction if we can show that the intersection of $\tilde{\Qs}'$ with the kernel $\mathbf{A}$ 
of $\mathrm{Int}$ is trivial. It suffices to show that the intersection has no $\KK$-points different from the identity. 
But this is clear since $\mathbf{A}(\KK)$ is an elementary $2$-group of rank $2$ and $\Qs'(\KK)$ contains 
an automorphism whose inner action cyclically permutes the non-identity elements of $\mathbf{A}(\KK)$.
\end{proof}

\section{Type~III gradings on real twisted compositions}\label{s:Type_III}

\subsection{Cayley gradings on twisted Hurwitz compositions of rank $8$}

Let $\cC$ be a Cayley algebra over $\RR$ (or any real closed field), with norm $n$, multiplication $xy$ and standard involution $x\mapsto \bar x=n(1,x)1-x$. Recall that the associated para-Cayley algebra $\overline{\cC}$ coincides with $\cC$ as a quadratic space, but its multiplication is given by $x\bullet y=\bar x\bar y$ for any $x,y\in \cC$. 
Consider $\LL=\RR\times\CC$, its discriminant algebra $\Delta\simeq \CC$, and its $S_3$-Galois closure $\Sigma\simeq\LL\otimes\CC$. Recall from Subsection \ref{ss:twisted_compositions} that, for a fixed $3$-cycle $\rho\in\Gal(\Sigma/\RR)$, we have a cyclic composition $\bigl(\cC\otimes\LL\otimes\CC,\LL\otimes\CC,\rho,*,Q\bigl)$ with ground field $\CC$. The corresponding twisted composition $\bigl(\cC\otimes\LL\otimes\CC,\LL\otimes\CC,\beta,Q\bigl)$ does not depend on the choice of $\rho$. We record for future reference: 
\begin{equation}\label{eq:TC}
\begin{split}
&Q(x\otimes\ell\otimes c)=n(x)\ell^2\otimes c^2,\\
&b_Q(x\otimes\ell\otimes c,x'\otimes\ell'\otimes c')=n(x,x')\ell\ell'\otimes cc',\\
&(x\otimes\ell\otimes c)*(x'\otimes\ell'\otimes c')=(x\bullet x')\otimes \rho(\ell\otimes c)\rho^2(\ell'\otimes c'),\\
&\beta(X)=X*X,
\end{split}
\end{equation}
for all $x,x'\in \cC$, $\ell,\ell'\in \LL$, $c,c'\in \CC$ and $X\in \cC\otimes\LL\otimes\CC$.

Consider the twisted composition $(V,\LL,\beta,Q)=\TC(\overline{\cC},\LL)$ with ground field $\RR$, which is obtained by Galois descent as in Equation \eqref{eq:twisted_comp}. Here $\beta$ and $Q$ denote the restrictions to $V$ of the corresponding maps on $\cC\otimes\LL\otimes\CC$. 
The descent is given by the involutive $\CC$-antilinear operator $\tilde\iota=\bar{}\otimes\id\otimes\iota$, where $\iota$ is the generator of $\Gal(\CC/\RR)$,
so $\iota(\bi)=-\bi$ for the imaginary unit $\bi\in\CC$. Thus,
\begin{equation}\label{eq:VTC}
V=\{x\in \cC\otimes\LL\otimes\CC \mid \tilde\iota(x)=x\}
 =(\RR 1\otimes\LL\otimes 1)\oplus(\cC^0\otimes\LL\otimes\bi),
\end{equation}
where $\cC^0:=\{x\in \cC\mid n(x,1)=0\}=\{x\in \cC\mid \bar x=-x\}$.

Fix a primitive cube root of unity $\omega\in\CC$ and let $\xi=(1,\omega)\in\LL$.
Given an abelian group $G$, an element $h\in G$ of order $3$ and a $G$-grading $\Gamma_\cC$ on $\cC$, $h$ determines a $G$-grading on $\LL$ by setting 
$\deg(\xi):=h$, and hence $\Gamma_\cC$ and $h$ define a $G$-grading of Type~III on $(\cC\otimes\LL\otimes\CC,\LL\otimes\CC,\beta,Q)$ whose homogeneous components are given by
\[
(\cC\otimes\LL\otimes\CC)_g\bydef \bigl(\cC_g\otimes \LL_e\otimes\CC\bigr)\,\oplus\,\bigl(\cC_{gh^2}\otimes\LL_h\otimes\CC\bigr)\,\oplus\, \bigl(\cC_{gh}\otimes\LL_{h^2}\otimes\CC\bigr).
\]
Since the components are invariant under $\tilde\iota$, this grading restricts to a $G$-grading of Type~III on $\TC(\overline{\cC},\LL)$, which we denote by $\Gamma\bigl(G,\Gamma_\cC,h\bigr)$. We will refer to gradings of this form as \emph{Cayley gradings}.

\begin{lemma}\label{le:h_h2}
$\Gamma\bigl(G,\Gamma_\cC,h\bigr)$ is isomorphic to $\Gamma\bigl(G,\Gamma_\cC,h^2\bigr)$.
\end{lemma}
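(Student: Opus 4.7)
The plan is to construct an explicit isomorphism using the nontrivial $\RR$-algebra automorphism $\varphi_0=\id_\RR\times\iota$ of $\LL=\RR\times\CC$. The two gradings in question differ only in the $G$-grading assigned to $\LL$: in the first, $\deg\xi=h$, and in the second, $\deg\xi=h^2$. Since $\varphi_0(\xi)=(1,\bar\omega)=\xi^2$, the automorphism $\varphi_0$ maps the homogeneous component of degree $g$ in the first $\LL$-grading onto the component of the same degree $g$ in the second. So it is enough to lift $\varphi_0$ to a $\varphi_0$-semilinear self-isomorphism of the twisted composition $(V,\LL,\beta,Q)$.

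The natural candidate is $\tilde\psi\bydef\id_\cC\otimes\varphi_0\otimes\iota$ acting on $\cC\otimes\LL\otimes\CC$. A direct check shows that $\tilde\psi$ commutes with the descent operator $\tilde\iota=\bar{}\otimes\id\otimes\iota$, so it restricts to an $\RR$-linear involution $\psi$ of $V$. Since the $G$-grading on $\cC\otimes\LL\otimes\CC$ inducing each of our two gradings on $V$ is the tensor product of $\Gamma_\cC$, the appropriate $\LL$-grading, and the trivial grading on the last $\CC$ factor, the remarks on $\varphi_0$ above, together with the obvious behavior of $\id_\cC$ and $\iota$, imply that $\tilde\psi$, and hence $\psi$, sends the homogeneous component of degree $g$ in $\Gamma(G,\Gamma_\cC,h)$ to the component of the same degree $g$ in $\Gamma(G,\Gamma_\cC,h^2)$.

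It remains to verify that $\psi$ is an isomorphism of twisted compositions, i.e., that it is $\varphi_0$-semilinear over $\LL$ and intertwines $Q$ and $\beta$ modulo $\varphi_0$. Semilinearity and preservation of $Q$ are straightforward calculations based on the formulas in \eqref{eq:TC} and $\iota^2=\id$. The main obstacle is preservation of $\beta$, which I would establish by proving the stronger statement that $\tilde\psi$ is a homomorphism for the cyclic composition: $\tilde\psi(X*Y)=\tilde\psi(X)*\tilde\psi(Y)$. By the explicit formula for $*$ in \eqref{eq:TC} and since $\varphi_0\otimes\iota$ is a multiplicative $\RR$-algebra map on $\Sigma=\LL\otimes\CC$, this reduces to showing that $\varphi_0\otimes\iota$ commutes with the chosen $3$-cycle $\rho$. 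Using the identification $\Sigma\simeq\CC\times\CC\times\CC$ recalled in Subsection \ref{ss:twisted_compositions}, $\varphi_0\otimes\iota$ acts as componentwise complex conjugation while $\rho$ acts as cyclic permutation of the three coordinates, and these commute.
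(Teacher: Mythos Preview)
Your argument is correct. The paper's proof constructs the isomorphism via the $\CC$-linear operator $\bar{\phantom{x}}\otimes\tau\otimes\id$ on $\cC\otimes\LL\otimes\CC$ (with $\tau=\varphi_0$), using that $(\tau\otimes\id)\rho=\rho^2(\tau\otimes\id)$ together with the Cayley involution on the first factor to obtain an automorphism of $\beta$. You instead use the $\CC$-antilinear operator $\id_\cC\otimes\varphi_0\otimes\iota$, relying on the fact that $\varphi_0\otimes\iota$ becomes componentwise conjugation on $\CC^3$ and therefore commutes with $\rho$, so the cyclic product $*$ itself is preserved. Both operators commute with the descent $\tilde\iota$, and in fact they differ precisely by $\tilde\iota$, so their restrictions to $V$ coincide: you have produced the same isomorphism of $\TC(\overline{\cC},\LL)$ as the paper, only verified through a different extension to $\cC\otimes\LL\otimes\CC$. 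Your route avoids invoking that the standard involution of $\cC$ preserves each homogeneous component of $\Gamma_\cC$, at the cost of working with a $\CC$-antilinear auxiliary map; the paper's route stays $\CC$-linear but needs the anti-automorphism property $\overline{x\bullet y}=\bar{y}\bullet\bar{x}$ to pass from $*$ to $\beta$.
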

\begin{proof} Let $\tau$ be the nontrivial automorphism of $\LL$ over $\RR$, so $\tau(\xi)=\xi^2$. Then, since $(\tau\otimes\id)\circ\rho=\rho^2\circ(\tau\otimes\id)$ on $\LL\otimes\CC$, the $\CC$-linear operator $\;\bar{}\otimes\tau\otimes\id$ is an automorphism of $(\cC\otimes\LL\otimes\CC,\LL\otimes\CC,\beta,Q)$. 
Since it commutes with $\tilde\iota$, it restricts to an automorphism of $\TC(\overline{\cC},\LL)$. But the standard involution preserves the components of $\Gamma_\cC$ while $\tau(\xi)=\xi^2$, so this automorphism sends $\Gamma\bigl(G,\Gamma_\cC,h\bigr)$ to $\Gamma\bigl(G,\Gamma_\cC,h^2\bigr)$.
\end{proof}

We are going to prove that any Type~III grading on a real twisted composition of rank $8$ is isomorphic to a Cayley grading. Moreover, we will see that, in most cases, the graded Cayley algebra $\cC$ can be obtained in a canonical way.

\subsection{Reduction to Cayley gradings}

Let $(V,\LL,\beta,Q)$ be a real twisted composition of rank $8$ with $\LL=\RR\times\CC$. As before, fix a primitive cube root of unity $\omega\in\CC$ and let $\xi=(1,\omega)\in\LL$. Let $G$ be an abelian group.

\begin{lemma}\label{le:e}
Let $\Gamma$ be a $G$-grading of Type~III on $(V,\LL,\beta,Q)$. Then there exists an element $\veps\in V_e$ such that $\beta(\veps)=\veps$ and $Q(\veps)=1$.
\end{lemma}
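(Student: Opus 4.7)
I would prove the lemma by Galois descent from the algebraic closure $\KK=\CC$ of $\RR$, reducing to the analogous existence result over an algebraically closed field already established in \cite{EK15}.

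First, I would extend scalars to $\CC$. Writing $\tilde V := V\otimes_\RR\CC$ and $\tilde\LL := \LL\otimes_\RR\CC\simeq\CC\times\CC\times\CC$, and fixing a $3$-cycle $\rho\in\Gal(\tilde\LL/\CC)\simeq A_3$, the twisted composition $(V,\LL,\beta,Q)$ becomes a cyclic composition $(\tilde V,\tilde\LL,\rho,*,Q)$ with $\beta(v)=v*v$, as in Subsection~\ref{ss:twisted_compositions}. The Galois involution $\tilde\iota = \bar{\phantom{x}}\otimes\id\otimes\iota$ preserves the grading (so $\tilde V_e = V_e\otimes\CC$), commutes with $\beta$ and $Q$, and has fixed subspace equal to $V$.

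Next, I would invoke \cite[\S 4.2]{EK15}: since $\CC$ is algebraically closed and contains a primitive cube root of unity, every Type~III $G$-grading on a cyclic composition of rank $8$ over $\CC$ is isomorphic to a Cayley grading, in which the element $1\otimes 1\otimes 1$ is the canonical idempotent of the relevant degree-$e$ component. Transporting this element through the isomorphism produces $\tilde\veps\in\tilde V_e$ with $\tilde\veps*\tilde\veps=\tilde\veps$ (equivalently $\beta(\tilde\veps)=\tilde\veps$) and $Q(\tilde\veps)=1$.

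The crucial remaining step is to descend $\tilde\veps$ to a $\tilde\iota$-fixed element, which will then lie in $V_e$. For this, I would study the nonempty $\tilde\iota$-stable set
\[
\tilde X := \{\,v\in\tilde V_e \mid \beta(v)=v,\ Q(v)=1\,\}.
\]
Using the Cayley grading description, I would identify a finite abelian subgroup $A$ of sign changes inside $Z\bigl(\Spins(V,\LL,\beta,Q)(\CC)\bigr)\cap\tilde\LL_e^\times$ that acts on $\tilde X$ via $\tilde\veps\mapsto a\tilde\veps$ (note that the conditions $Q(a\tilde\veps)=a^2$ and $\beta(a\tilde\veps)=a^\sharp\tilde\veps$ force $a^2=1$ and $a^\sharp=a$), and show that this action is transitive on the component of $\tilde X$ containing $\tilde\veps$. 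The group $A$ is a Galois-stable elementary $2$-group of rank at most $2$, and $\cG=\langle\tilde\iota\rangle$ is cyclic of order $2$, so the obstruction to adjusting $\tilde\veps$ to a $\tilde\iota$-fixed element lies in $\mathrm H^1(\cG,A)$, which vanishes by the same kind of argument used in the proof of Lemma~\ref{lm:trivial_Galois_cohom}.

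\textbf{Main obstacle.} The principal technical point will be to pin down the correct ``sign-change'' group $A$ that acts transitively on the relevant component of $\tilde X$ and to verify that its action is compatible with the Galois involution $\tilde\iota$, so that the $\mathrm H^1$-vanishing applies. Once this structural fact is established, the descent and the passage from $\tilde\veps$ to the required real $\veps\in V_e$ are routine.
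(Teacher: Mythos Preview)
Your descent strategy has a genuine gap at the ``main obstacle'' you yourself flag. The group you propose, $A\subset Z\bigl(\Spins(V,\LL,\beta,Q)(\CC)\bigr)\cap\tilde\LL_e^\times$, is trivial: in a Type~III grading we have $\tilde\LL_e=\CC\cdot 1$, and the nontrivial central elements $(\lambda_1,\lambda_2,\lambda_3)$ with $\lambda_i=\pm1$, $\lambda_1\lambda_2\lambda_3=1$, are not homogeneous of degree $e$ (indeed, this is exactly why multiplication by such elements moves one grading on $V$ to a \emph{different} one, as in Theorem~\ref{th:real_tri_to_comp}). So no nontrivial sign change preserves $\tilde V_e$, and your torsor picture collapses. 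More fundamentally, the set $\tilde X=\{v\in\tilde V_e\mid\beta(v)=v,\ Q(v)=1\}$ is typically an infinite variety, not a finite orbit: for instance, when $(\tilde V_e,*,Q)$ is a para-Hurwitz algebra of dimension $\ge 2$, its idempotents of norm $1$ are the para-unit together with the quadric $\{-\tfrac12\cdot 1+y:y\in 1^\perp,\ n(y)=\tfrac34\}$. A nonempty $\tilde\iota$-stable complex variety need not have a real point, so the $\mathrm H^1$-vanishing argument does not apply.

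The paper's proof proceeds along an entirely different line: it embeds $V$ into the Albert (exceptional cubic Jordan) algebra $J=\LL\oplus V$, transfers the grading to $J$, and first shows $V_e\ne 0$ by a contradiction using the classification of fine gradings on the complex Albert algebra together with the fact that $\omega\notin\RR$. Then, knowing $J_e=\RR 1\oplus V_e$ is a semisimple cubic Jordan algebra of dimension $\ge 2$, it picks a generic $x\in J_e$ so that $\RR[x]$ is \'etale (isomorphic to $\RR\times\RR$, $\RR\times\RR\times\RR$, or $\RR\times\CC$) and writes down an explicit element $\veps\in\RR[x]\cap V_e$ --- namely the one corresponding to $(2,-1)$ or $(2,-1,-1)$ --- for which $\veps^\sharp=-1+\veps$, whence $Q(\veps)=1$ and $\beta(\veps)=\veps$ by \eqref{eq:norm_trace}. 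This avoids any descent subtleties by working directly over $\RR$ inside a well-understood commutative subalgebra.
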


\begin{proof}
By \cite[Theorem 38.6]{KMRT}, the vector space $J=\LL\oplus V$ is an exceptional cubic Jordan algebra (or Albert algebra), whose norm, adjoint and trace form are given as follows (recall that $N_V(v)\bydef b_Q\bigl(v,\beta(v)\bigr)$ belongs to the ground field):
\begin{equation}\label{eq:norm_trace}
\begin{split}
N\bigl((\ell,v)\bigr)&=N_{\LL/\RR}(\ell)+b_Q\bigl(v,\beta(v)\bigr)-T_{\LL/\RR}\bigl(\ell Q(v)\bigr),\\
(\ell,v)^\sharp&=\bigl(\ell^\sharp-Q(v),\beta(v)-\ell v\bigr),\\
T\bigl((\ell,v),(\ell',v')\bigr)&=T_{\LL/\RR}\bigl(\ell\ell'\bigr)+T_{\LL/\RR}\bigl(b_Q(v,v')\bigr).
\end{split}
\end{equation}
In particular, $V$ is the orthogonal complement to $\LL$ relative to the trace form.

Our grading $\Gamma$ induces a grading on the Albert algebra $J$, and $J_e=\LL_e\oplus V_e=\RR 1\oplus V_e$. Let us first check that $V_e\neq 0$. Otherwise $\dim J_e=1$ and hence, applying \cite[Theorem 5.12]{EKmon} (see also \cite{EK12a}) to $J\otimes\CC$, we get that the support of the grading is an elementary abelian subgroup isomorphic to $\ZZ_3^3$ and $\dim J_g=1$ for any $g$ in the support. Now the argument in the proof of \cite[Lemma 5.21]{EKmon} shows that there are generators $h_1,h_2,h_3$ of the support such that, for any nonzero elements $X_i\in J_{h_i}\otimes\CC$, $i=1,2,3$, we have $0\neq (X_1X_2)X_3=\omega X_1(X_2X_3)$. But we may take $X_i\in J_{h_i}\simeq J_{h_i}\otimes 1$, so that both $(X_1X_2)X_3$ and $X_1(X_2X_3)$ are in the one-dimensional space $J_{h_1h_2h_3}$, and we get a contradiction because $\omega\not\in\RR$.

Now \cite[Corollary 5.2]{EKmon} shows that $J_e\otimes\CC$ is a semisimple cubic Jordan algebra of dimension at least $2$, hence so is $J_e=\RR 1\oplus V_e$,
and $V_e=\{x\in J_e\mid T(x)=0\}$ where $T(x):=T(x,1)$. Hence, there is a Zariski-open nonempty subset $U$ of $J_e$ such that the (associative) subalgebra $\RR[x]$ generated by any $x\in U$ is \'etale \cite[Lemma 38.2(2)]{KMRT}. For $x\in U\setminus\RR 1$, $\RR[x]$ is then isomorphic to either $\RR\times\RR$ with $N\bigl((a,b)\bigr)=ab^2$, $\RR\times\RR\times\RR$ with $N\bigl((a,b,c)\bigr)=abc$, or $\RR\times\CC$ with $N\bigl((b,c)\bigr)=bc\bar{c}$. The element $\veps\in \RR[x]$ corresponding to
\[
\begin{cases}
(2,-1)&\text{if $\RR[x]\simeq\RR\times\RR$ or $\RR\times\CC$,}\\
(2,-1,-1)&\text{if $\RR[x]\simeq\RR\times\RR\times\RR$,}
\end{cases}
\]
is in $V_e$, and $\veps^\sharp$ corresponds to either $(1,-2)$ or $(1,-2,-2)$, so  $\veps^\sharp =-1+\veps$ and hence, by \eqref{eq:norm_trace}, $Q(\veps)=1$ and $\beta(\veps)=\veps$.

It must be remarked that $\RR\times\RR$ is a quadratic \'etale algebra, but it is treated here as cubic, being a subalgebra of the Jordan algebra $J$ of degree $3$, with the cubic norm obtained as the restriction of the norm $N$ of $J$. The trace is given by $T\bigl((a,b)\bigr)=a+2b$ and the adjoint by $(a,b)^\sharp=(b^2,ab)$. These are the restrictions to $\RR\times\RR$ of the corresponding maps on $\RR\times\CC$ (or on $\RR\times\RR\times\RR$, if we embed $\RR\times\RR$ by $(a,b)\mapsto (a,b,b)$).
\end{proof}

Consider the twisted composition $(V\otimes\CC,\LL\otimes\CC,\beta,Q)$ with ground field $\CC$, where $\beta$ and $Q$ are extended from $V$.
As before, let $\iota$ be the generator of $\Gal(\CC/\RR)$, i.e., the complex conjugation. Let $\rho$ be the generator of $\Gal(\LL\otimes\CC/\CC)\simeq A_3$ that satisfies $\rho(\xi\otimes 1)=\xi\otimes\omega$ and let $\beta_\rho(x,y):=x*y$ be  the unique structure of cyclic composition on $V\otimes\CC$ with respect to $\rho$ such that $\beta(x)=x*x$ for all $x\in V\otimes\CC$.

Let $\Gamma$ be a $G$-grading of Type~III on $(V,\LL,\beta,Q)$. Take $\veps\in V_e$ as in Lemma~\ref{le:e} and denote by $\pi$ the negative of the corresponding  hyperplane reflection in $V$, i.e., $\pi(x)=b_Q(x,\veps)\veps -x$. Also define the $\CC$-linear operator $\varphi$ on $V\otimes\CC$ by $\varphi(x)= \beta_\rho\bigl((\pi\otimes\id)(x),\veps\otimes 1\bigr)$.

As shown in the proof of $(3)\Rightarrow (1)$ in \cite[Theorem 36.24]{KMRT}, we have
\begin{equation*} 
\begin{gathered}
\varphi^2(x)=\beta_\rho\bigl(\veps\otimes 1,(\pi\otimes\id)(x)\bigr),\ \varphi^3=\id,\\ 
\varphi(\pi\otimes\id)=(\pi\otimes\id)\varphi,\ (\id\otimes\iota)\varphi(\id\otimes\iota)=\varphi^2,\\
(\id\otimes\iota)\beta_\rho(x,y)=\beta_\rho\bigl((\pi\otimes\id)(y),(\pi\otimes\id)(x)\bigr).
\end{gathered}
\end{equation*}
It follows that the operators $\varphi$ and $\pi\otimes\iota$ define a semilinear action of $\Gal(\LL\otimes\CC/\RR)\simeq S_3$ on $V\otimes\CC$. Consider the corresponding $\RR$-form of the free $(\LL\otimes\CC)$-module $V\otimes\CC$:
\begin{equation}\label{eq:C}
\cC\bydef\{x\in V\otimes\CC \mid \varphi(x)=x=(\pi\otimes\iota)(x)\},
\end{equation}
so we have an $(\LL\otimes\CC)$-linear isomorphism
\begin{equation}\label{eq:Phi}
\begin{split}
\Phi\colon\cC\otimes(\LL\otimes\CC)&\longrightarrow V\otimes\CC,\\
  x\otimes \alpha\ &\mapsto\quad x\alpha.
\end{split}
\end{equation}
Moreover, $\cC$ is a Cayley algebra over $\RR$ with unity $\veps\otimes 1$ (which, by abuse of notation, will be denoted by $\veps$), multiplication given by \cite[p.~503]{KMRT}:
\[
xy=\beta_\rho\bigl((\pi\otimes\id)(x),(\pi\otimes\id)(y)\bigr),
\]
and norm $n$ given by the restriction to $\cC$ of the extension of $Q$ to $V\otimes\CC$. We record for future reference:

\begin{remark}\label{rem:beta_on_C}
The norm and the standard involution on $\cC$ are given by $n(x)=Q(x)$ and $\bar x=(\pi\otimes\id)(x)$.
Also, for any $x,y\in \cC$, we have $\beta_\rho(x,y)=\bar x\bar y=x\bullet y$ (the para-Cayley multiplication).
\end{remark}

Note that the operator $\id\otimes\iota$ on $V\otimes\CC$ corresponds under $\Phi$ to the operator $\tilde\iota=\bar{}\otimes\id\otimes\iota$ on $\cC\otimes\LL\otimes\CC$ and hence $V\simeq V\otimes 1$ corresponds under $\Phi$ to $\tilde V=(\RR 1\otimes\LL\otimes 1)\oplus(\cC^0\otimes\LL\otimes\bi)$, where $\cC^0=\{x\in \cC\mid n(x,\veps)=0\}$. Therefore, $\Phi$ restricts to the following isomorphism, which we denote by the same letter:
\[
\Phi\colon\TC(\overline{\cC},\LL)\rightarrow (V,\LL,\beta,Q).
\]
The given grading $\Gamma$ on $(V,\LL,\beta,Q)$ extends to a $G$-grading on $(V\otimes\CC,\LL\otimes\CC,\beta,Q)$. Being of Type~III, the latter grading corresponds to a homomorphism $\eta\colon G^D\rightarrow \AAut_\CC(V\otimes\CC,\LL\otimes\CC,\beta,Q)$ whose image is contained in the inverse image of $\mathbf{A}_3$ under the projection $\AAut_\CC(V\otimes\CC,\LL\otimes\CC,\beta,Q)\rightarrow \AAut_\CC(\LL\otimes\CC)\simeq \mathbf{S}_3$, which is $\AAut_\CC(V\otimes\CC,\LL\otimes\CC,\rho,\beta_\rho,Q)$. Hence, not only $b_Q$ but also $\beta_\rho$ is a homogeneous map of degree $e$. Since $\veps\in V_e$, it follows that the maps $\pi$ and $\varphi$ are homogeneous of degree $e$, hence the Cayley algebra $\cC$ inherits a $G$-grading from $V\otimes\CC$. Moreover, $\Phi$ becomes a graded isomorphism if we define a $G$-grading on $\cC\otimes\LL\otimes\CC$ by combining the gradings on $\cC$ and on $\LL$ induced by $\Gamma$.

In summary:

\begin{theorem}\label{th:typeIII_C}
Let $\Gamma$ be a $G$-grading of Type~III on a real twisted composition $(V,\LL,Q,\beta)$ of rank $8$ with $\LL=\RR\times\CC$. Then there exists a Cayley algebra $\cC$ over $\RR$, an element $h\in G$ of order $3$ and a $G$-grading $\Gamma_\cC$ on $\cC$ such that $\Gamma$ is isomorphic to the grading $\Gamma(G,\Gamma_\cC,h)$ on $\TC(\overline{\cC},\LL)$. \qed
\end{theorem}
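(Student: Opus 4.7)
The plan is to follow the construction sketched in the paragraphs preceding the statement: produce an element $\veps\in V_e$ with $\beta(\veps)=\veps$ and $Q(\veps)=1$, use it to build a semilinear action of $S_3$ on $V\otimes\CC$ whose fixed points form a Cayley algebra $\cC$ over $\RR$, and then check that every map in the construction is degree-preserving so that the resulting identification $V\simeq\TC(\wb{\cC},\LL)$ is an isomorphism of $G$-graded twisted compositions.

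First I would apply Lemma~\ref{le:e} to obtain $\veps\in V_e$ with the stated normalizations. Next, I extend scalars from $\RR$ to $\CC$ to get $(V\otimes\CC,\LL\otimes\CC,\beta,Q)$, fix the generator $\rho$ of $\Gal(\LL\otimes\CC/\CC)$ with $\rho(\xi\ot 1)=\xi\ot\omega$, and consider the associated cyclic composition structure $\beta_\rho$ on $V\otimes\CC$. Following the proof of $(3)\Rightarrow(1)$ in \cite[Theorem~36.24]{KMRT}, I introduce the reflection $\pi(x)=b_Q(x,\veps)\veps-x$ and the $\CC$-linear operator $\varphi(x)=\beta_\rho\bigl((\pi\otimes\id)(x),\veps\ot 1\bigr)$. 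The relations recalled just before \eqref{eq:C} then show that $\varphi$ and $\pi\otimes\iota$ generate a semilinear $S_3$-action on $V\otimes\CC$ whose fixed-point $\RR$-subspace $\cC$ carries the structure of a Cayley algebra with unit $\veps$, norm $Q|_\cC$ and standard involution $\pi|_\cC$. The $(\LL\otimes\CC)$-linear extension $\Phi$ of \eqref{eq:Phi} is then an isomorphism which, by \eqref{eq:VTC}, restricts to an isomorphism of twisted compositions $\Phi\colon\TC(\wb{\cC},\LL)\to(V,\LL,\beta,Q)$.

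The essential step, once this construction is in place, is to verify that $\Phi$ is compatible with the gradings. Since $\Gamma$ is of Type~III, the corresponding homomorphism $\eta\colon G^D\to\AAut_\CC(V\otimes\CC,\LL\otimes\CC,\beta,Q)$ factors through the stabilizer of $\rho$, i.e., through $\AAut_\CC(V\otimes\CC,\LL\otimes\CC,\rho,\beta_\rho,Q)$, so that both $b_Q$ and $\beta_\rho$ are homogeneous of degree $e$. Because $\veps\in V_e$, it then follows at once that $\pi$ and $\varphi$ are homogeneous of degree $e$, which forces the Cayley algebra $\cC$ to inherit a $G$-grading $\Gamma_\cC$. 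The restriction of $\Gamma$ to $\LL$ is a $G$-grading of the form $\LL=\RR\oplus\RR\xi\oplus\RR\xi^2$, and I set $h\bydef\deg(\xi)$; since the projection of $\im\eta$ onto $\AAut_\RR(\LL)$ is precisely $\boldsymbol{\mu}_3$, this $h$ has order $3$. Combining the gradings on $\cC$ and on $\LL$ as in the definition of $\Gamma(G,\Gamma_\cC,h)$ makes $\Phi$ an isomorphism of $G$-graded twisted compositions.

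The main obstacle lies in Lemma~\ref{le:e}: the construction rests on the existence of a homogeneous $\veps\in V_e$ with $\beta(\veps)=\veps$ and $Q(\veps)=1$, which is proved by passing to the Albert algebra $\LL\oplus V$ and using the classification of \'{e}tale cubic $\RR$-subalgebras generated by a single element, ruling out the degenerate case $V_e=0$ via an argument over $\CC$ involving the cube root of unity $\omega\notin\RR$. A minor ambiguity in the choice of $\rho$ (equivalently, in whether the distinguished cube root is $\omega$ or $\omega^2$) translates into the ambiguity between $h$ and $h^2$, which is harmless by Lemma~\ref{le:h_h2}.
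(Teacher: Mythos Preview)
Your proposal is correct and follows essentially the same approach as the paper: invoke Lemma~\ref{le:e} to produce $\veps\in V_e$, apply the $(3)\Rightarrow(1)$ construction of \cite[Theorem~36.24]{KMRT} to obtain the $S_3$-action on $V\otimes\CC$ and the Cayley algebra $\cC$ as its fixed points, and then use the Type~III hypothesis to conclude that $\beta_\rho$, $\pi$ and $\varphi$ are all degree-preserving, so that $\cC$ inherits a $G$-grading and $\Phi$ becomes a graded isomorphism. Your closing remark about the $h$ versus $h^2$ ambiguity and Lemma~\ref{le:h_h2} is a correct observation, though not needed for the existence statement as phrased.
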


\subsection{Classification of Type~III gradings}

With notation as in the previous subsection, observe that $\beta_\rho$ restricts to a $\CC$-bilinear product $*$ on $V_e\otimes\CC$ and the restriction of $Q$ to $V_e\otimes\CC$ takes values in $\LL_e\otimes\CC=\CC$. Hence, Equation \eqref{eq:cyclic_comp_id} for $V_e\otimes\CC$ becomes Equation \eqref{eq:symmetric_comp_id} with $*$ as the product and $Q$ as the norm, so $(V_e\otimes\CC,*,Q)$ is a symmetric composition algebra over $\CC$, and $\veps\simeq \veps\otimes 1$ is an idempotent of norm $1$ in this algebra. In particular, $\dim_\RR V_e=1,2,4$ or $8$, and we are going to consider each of these possibilities. Denote, as before, $\xi=(1,\omega)$ and let $h=\deg(\xi)$ with respect to the $G$-grading induced by $\Gamma$ on $\LL$ (so $h\in G$ has order $3$). The following observation will be useful:

\begin{lemma}\label{lm:para-unit}
Let $\cC$ be the $\RR$-form of $V\otimes\CC$ defined by Equation~\eqref{eq:C}. If $\veps\otimes 1$ is a para-unit of $V_e\otimes\CC$, then 
$\cC_e=(\RR\veps)\otimes 1\oplus(\RR\veps)^\perp\otimes\bi$, where the orthogonal complement is taken in $V_e$ with respect to $Q$.
In particular, $\dim_\RR \cC_e=\dim_\RR V_e$.
\end{lemma}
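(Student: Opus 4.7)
The plan is to compute $\cC_e$ directly from the definition $\cC=\{x\in V\otimes\CC : \varphi(x)=x=(\pi\otimes\iota)(x)\}$ by intersecting with the $e$-homogeneous component $(V\otimes\CC)_e = V_e\otimes\CC$. First I would confirm that both operators $\varphi$ and $\pi\otimes\iota$ preserve the $G$-grading: since $\veps\in V_e$, the polar form $b_Q$ is homogeneous of degree $e$, and the cyclic product $\beta_\rho=*$ on $V\otimes\CC$ is also homogeneous of degree $e$ (the grading is of Type~III, so its complex extension is compatible with the cyclic composition structure); the Galois involution $\id\otimes\iota$ on the $\CC$-factor preserves each $V_g\otimes\CC$. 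Hence $\cC_e=\cC\cap(V_e\otimes\CC)$ is what remains to be computed.

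Next I would show that $\varphi$ restricts to the identity on $V_e\otimes\CC$. Since $\veps\otimes 1$ is a para-unit of the $\CC$-symmetric composition algebra $(V_e\otimes\CC,*,Q)$, we have $y*\veps=b_Q(y,\veps)\veps-y$ for every $y\in V_e\otimes\CC$, and in particular $\veps*\veps=\veps$ because $Q(\veps)=1$. For $x\in V_e\otimes\CC$, the definition gives $\varphi(x)=\pi(x)*\veps$; substituting $\pi(x)=b_Q(x,\veps)\veps-x$ and expanding via the para-unit identity collapses $\varphi(x)$ to $x$. Thus the condition $\varphi(x)=x$ is automatic on $V_e\otimes\CC$, and $\cC_e$ coincides with the set of fixed points of $\pi\otimes\iota$ in $V_e\otimes\CC$.

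Finally, since $V_e\otimes\CC$ is a composition algebra the restriction of $Q$ to $V_e$ is nondegenerate, and $Q(\veps)=1$ gives the orthogonal decomposition $V_e=\RR\veps\oplus(\RR\veps)^\perp$. On $\RR\veps$ the map $\pi$ acts as $+1$ (indeed $\pi(\veps)=b_Q(\veps,\veps)\veps-\veps=\veps$), whereas on $(\RR\veps)^\perp$ it acts as $-1$. Computing the fixed points of $\pi\otimes\iota$ on each summand tensored with $\CC$ yields $\RR\veps\otimes 1$ from the first (the condition is $\bar c=c$) and $(\RR\veps)^\perp\otimes\bi$ from the second (the sign change in $\pi$ forces $-\bar c=c$). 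Combining the two gives $\cC_e=(\RR\veps)\otimes 1\oplus(\RR\veps)^\perp\otimes\bi$, and the equality of real dimensions is immediate. I do not anticipate a genuine obstacle here; the only point requiring care is the homogeneity of $\varphi$ and $\pi\otimes\iota$ in the first step, which is what legitimises restricting the descent from $V\otimes\CC$ to $\cC$ inside the single component $V_e\otimes\CC$.
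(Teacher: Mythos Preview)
Your proof is correct and follows essentially the same approach as the paper: both show that $\varphi$ restricts to the identity on $V_e\otimes\CC$ via the para-unit identity (the paper phrases this as $x*(\veps\otimes 1)=(\pi\otimes\id)(x)$ and hence $\varphi=(\pi\otimes\id)^2=\id$, which is equivalent to your expansion), and then identify $\cC_e$ with the fixed points of $\pi\otimes\iota$ in $V_e\otimes\CC$. Your treatment is somewhat more explicit in spelling out the eigenspace decomposition of $\pi$ on $V_e$ and the resulting fixed-point computation, whereas the paper invokes the Galois-descent interpretation more tersely, but the substance is the same.
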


\begin{proof}
By definition of para-units, for any $x\in V_e\otimes\CC$, we have $x*(\veps\otimes 1)=(\pi\otimes\id)(x)$ and hence
\[
\varphi(x)=\beta_\rho\bigl((\pi\otimes\id)(x),\veps\otimes 1\bigr)=(\pi\otimes\id)^2(x)=x.
\]
Since $\varphi$ is the action of a generator of $\Gal(\LL\otimes\CC/\CC)$ on $V\otimes\CC$, we obtain 
$V_e\otimes\CC=\CC\cC_e$, which corresponds to $\cC_e\otimes \RR 1\otimes\CC$ under the isomorphism $\Phi$ of Equation~\eqref{eq:Phi}.
It remains to recall that the generator of $\Gal(\CC/\RR)$ acts on $V\otimes\CC$ as $\pi\otimes\iota$.
\end{proof}

\bigskip

\noindent $\boxed{\dim V_e=1}$ In this case, $V_e=\RR\veps=\cC_e$, so the restriction $\Gamma_\cC$ of the (complexified) grading $\Gamma$ to $\cC$ is isomorphic to $\Gamma_\OO(G,\RR,T)$ or $\Gamma_{\OO_s}(G,\RR,T,\mu)$, where $T\simeq\ZZ_2^3$ and $\mu\colon T\rightarrow \{\pm 1\}$ is a nontrivial homomorphism
(see Corollary \ref{co:real_Cayley} in Section \ref{s:octonions_G2}).

\bigskip

\noindent $\boxed{\dim V_e=2}$ Then the product $*$ is commutative and, since $\beta(x)=x*x$ for any $x\in V\simeq V\otimes 1$, we have $x*y=\frac{1}{2}\Bigl(\beta(x+y)-\beta(x)-\beta(y)\Bigr)\in V_e$ for any $x,y\in V_e$. Hence, $(V_e,*,Q)$ is a two-dimensional symmetric composition algebra over $\RR$, so it is a para-Hurwitz algebra (see e.g. \cite[Proposition 4.43]{EKmon}). Up to isomorphism, 
there are two possibilities: the para-Hurwitz algebras $\overline{\RR\times\RR}$ and $\overline{\CC}$. 
In the first case $\veps$ is the unique idempotent of $(V_e,*,Q)$, which corresponds to the para-unit $(1,1)\in\overline{\RR\times\RR}$, while in the second case $(V_e,*,Q)$ has three different nonzero idempotents, which correspond to $1,\omega,\omega^2\in\overline{\CC}$ and are all para-units.

\noindent $\bullet$ If $V_e$ is isomorphic to $\overline{\RR\times\RR}$, then $V_e=\RR\veps\oplus\RR x$, with $b_Q(x,\veps)=0$ and $x*x=\veps$ (as $x$ corresponds to $(1,-1)\in\overline{\RR\times\RR}$). Now Lemma~\ref{lm:para-unit} gives $\cC_e=\RR(\veps\otimes 1)\oplus\RR(x\otimes\bi)$, and $(x\otimes\bi)^2=-(\veps\otimes 1)$, so $\cC_e$ is isomorphic to $\CC$. Hence, the restriction $\Gamma_\cC$ is isomorphic to $\Gamma_\OO(G,\CC,T)$ or $\Gamma_{\OO_s}(G,\CC,T,\mu)$, where $T\simeq\ZZ_2^2$ and $\mu\colon T\rightarrow \{\pm 1\}$ is a nontrivial homomorphism.

\noindent $\bullet$ If $V_e$ is isomorphic to $\overline{\CC}$, let $\veps_1,\veps_2,\veps_3$ be the nonzero idempotents. They are permuted by $\Aut_\RR(V_e,*,Q)$ and satisfy $\veps_1+\veps_2+\veps_3=0$ and $Q(\veps_i)=1$ for $i=1,2,3$. Suppose we have chosen $\veps=\veps_1$.
Since $b_Q(\veps_1,\veps_2-\veps_3)=0$ and $V_e=\RR\veps_1\oplus\RR(\veps_2-\veps_3)$, Lemma~\ref{lm:para-unit} gives 
$\cC_e=\RR(\veps_1\otimes 1)\oplus\RR((\veps_2-\veps_3)\otimes\bi)$, with unity $\veps_1\otimes 1$. 
There is an isomorphism $V_e\to\overline{\CC}$ sending $\veps_1\mapsto 1$, $\veps_2\mapsto\omega$, $\veps_3\mapsto\omega^2$, and $(\omega-\omega^2)^{\bullet 2}=(\omega^2-\omega)^2=\omega^2+\omega-2=-3$, so $\bigl((\veps_2-\veps_3)\otimes\bi\bigr)^{*2}=3(\veps_1\otimes 1)$.

Recall that $Q$ restricts to the norm of $\cC$ and $\beta_\rho$ to the para-Hurwitz product on $\cC$ (Remark \ref{rem:beta_on_C}). Hence, 
$E_1\bydef\frac12\bigl(\veps_1\otimes 1+\frac{1}{\sqrt{3}}(\veps_2-\veps_3)\otimes\bi\bigr)$ and  
$E_2\bydef\frac12\bigl(\veps_1\otimes 1-\frac{1}{\sqrt{3}}(\veps_2-\veps_3)\otimes\bi\bigr)$ 
are orthogonal idempotents of $\cC$, so $\cC$ must be the split Cayley algebra $\OO_s$. To be specific, suppose $\omega=-\frac12 +\frac{\sqrt{3}}{2}\bi$.
Then one checks easily that
\[
E_1=\frac13 \bigl(\veps_1\otimes 1+\veps_2\otimes\omega+\veps_3\otimes \omega^2\bigr),\;
E_2=\frac13 \bigl(\veps_1\otimes 1+\veps_2\otimes\omega^2+\veps_3\otimes \omega\bigr),
\] 
and also $\veps_1\otimes 1=E_1+E_2$, $\veps_2\otimes 1=\omega^2E_1+\omega E_2$, and $\veps_3\otimes 1=\omega E_1+\omega^2E_2$.

Therefore, the restriction $\Gamma_\cC$ is isomorphic to $\Gamma_{\OO_s}(G,\gamma)$ with $\gamma=(g_1,g_2,g_3)\in G^3$ such that $g_1g_2g_3=e$, 
so there is a good basis $\{ E_1,E_2,u_1,u_2,u_3,w_1,w_2,w_3 \}$ of $\cC$ with $\deg(u_i)=g_i=\deg(v_i)^{-1}$ for $i=1,2,3$ (see Section \ref{s:octonions_G2}). 
Thus, 
\[
\cC=\RR E_1\oplus\RR E_2\oplus U\oplus W
\] 
where $U=\lspan{u_1,u_2,u_3}$ and $W=\lspan{w_1,w_2,w_3}$, which are the Peirce components of $\cC$ relative to $E_1$ and $E_2=1-E_1$. We have $g_i\neq e$ for all $i=1,2,3$ because $\cC_e=\RR E_1\oplus\RR E_2$. Moreover, $g_i\not\in\langle h\rangle$ because $\dim_\CC (V_e\otimes\CC)=2$ and $V_e\otimes\CC$ corresponds, under the isomorphism $\Phi$ of Equation~\eqref{eq:Phi}, to 
\[
\begin{split}
&(\cC_e\otimes\LL_e\otimes \CC)\oplus(\cC_h\otimes\LL_{h^2}\otimes\CC)\oplus(\cC_{h^2}\otimes\LL_h\otimes\CC)\\
&=(\cC_e\otimes\RR 1\otimes \CC)\oplus(\cC_h\otimes\RR\xi^2\otimes\CC)\oplus(\cC_{h^2}\otimes\RR\xi\otimes\CC),
\end{split}
\] 
which implies $\cC_h=\cC_{h^2}=0$. Thus, $\gamma\in (G\setminus\langle h\rangle)^3$.

Note that
\[
\begin{split}
U&=\{x\in \cC\mid b_Q(\veps_i\otimes 1,x)=0,\ i=1,2,3,\ E_1x=x,\ E_2x=0\}\\
 &=\{x\in \cC\mid b_Q(\veps_i\otimes 1,x)=0,\ i=1,2,3,\ E_2\bullet x=-x,\ E_1\bullet x=0\}.
\end{split}
\]
Hence, for $x\in U$, we get
\[
\begin{split}
\beta_\rho(\veps_1\otimes 1,x)&=(\veps_1\otimes 1)\bullet x=(E_1+E_2)\bullet x=-x,\\
\beta_\rho(\veps_2\otimes 1,x)&=\omega^2\beta_\rho(E_1,x)+\omega\beta_\rho(E_2,x)=\omega^2 E_1\bullet x+\omega E_2\bullet x=-\omega x,\\
\beta_\rho(\veps_3\otimes 1,x)&=\omega\beta_\rho(E_1,x)+\omega^2\beta_\rho(E_2,x)=\omega E_1\bullet x+\omega^2 E_2\bullet x=-\omega^2 x.
\end{split}
\]
Conversely, if $x\in V\otimes\CC$ satisfies $\beta_\rho(\veps_i\otimes 1,x)=-\omega^{i-1}x$ and $b_Q(\veps_i\otimes 1,x)=0$, for $i=1,2,3$, and $(\pi\otimes\iota)(x)=x$, then $\varphi^2(x)=\beta_\rho\bigl(\veps_1\otimes 1,(\pi\otimes\id)(x)\bigr)=-\beta_\rho(\veps_1\otimes 1,x)=x$, 
hence $\varphi(x)=x$ and $x\in \cC$. Besides, 
\begin{align*}
E_1\bullet x&=\beta_\rho(E_1,x)=\frac13\beta_\rho\bigl(\veps_1\otimes 1+\veps_2\otimes\omega+\veps_3\otimes \omega^2,x\bigr)
=-\frac13(1+\omega+\omega^2)x=0,\\
E_2\bullet x&=\beta_\rho(E_2,x)=\frac13\beta_\rho\bigl(\veps_1\otimes 1+\veps_2\otimes\omega^2+\veps_3\otimes \omega,x\bigr)
=-\frac13(1+1+1)x=-x.
\end{align*}
We have proved:
\[
\begin{split}
U=\{x\in V\otimes\CC\mid\, &(\pi\otimes\iota)(x)=x,\\ 
&b_Q(\veps_i\otimes 1,x)=0, \beta_\rho(\veps_i\otimes 1,x)=-\omega^{i-1}x,\, i=1,2,3\}.
\end{split}
\]
In the same vein, we obtain:
\[
\begin{split}
W=\{x\in V\otimes\CC\mid\, & (\pi\otimes\iota)(x)=x,\\ 
&b_Q(\veps_i\otimes 1,x)=0,\beta_\rho(x,\veps_i\otimes 1)=-\omega^{i-1}x,\, i=1,2,3\}.
\end{split}
\]
If, instead of $\veps=\veps_1$, we choose $\veps=\veps_2$, then the Cayley algebra $\cC$ will be replaced by $\tilde{\cC}$, 
with $\tilde{\cC}_e=\RR\tilde{E}_1\oplus\RR\tilde{E}_2$, where the orthogonal idempotents are now
\[
\tilde E_1=\frac13\bigl(\veps_2\otimes 1+\veps_3\otimes\omega+\veps_1\otimes\omega^2)=\omega^2E_1,\;
\tilde E_2=\frac13\bigl(\veps_2\otimes 1+\veps_3\otimes\omega^2+\veps_1\otimes\omega)=\omega E_2,
\]
and the corresponding Peirce components are
\[
\begin{split}
\tilde{U}=\{x\in V\otimes\CC\mid\, & (\pi\otimes\iota)(x)=x,\\ 
&b_Q(\veps_i\otimes 1,x)=0,\beta_\rho(\veps_i\otimes 1,x)=-\omega^{i-2}x,\, i=1,2,3\},\\
\tilde{W}=\{x\in V\otimes\CC\mid\, & (\pi\otimes\iota)(x)=x,\\ 
&b_Q(\veps_i\otimes 1,x)=0,\beta_\rho(x,\veps_i\otimes 1)=-\omega^{i-2}x,\, i=1,2,3\}.
\end{split}
\]
But recall the element $\xi=(1,\omega)\in \LL$ of degree $h$. For any $x\in U$, we have:
\[
\beta_\rho\bigl(\veps_i\otimes 1,x(\xi\otimes 1)\bigr)=\beta_\rho(\veps_i\otimes 1,x)\rho^2(\xi\otimes 1)
=(-\omega^{i-1}x)(\xi\otimes\omega^2)=-\omega^{i-2}\bigl(x(\xi\otimes 1)\bigr),
\]
so we get $\tilde{U}=U(\xi\otimes 1)$ and, similarly, $\tilde{W}=W(\xi^2\otimes 1)$. Thus, 
\[
\tilde \cC=\RR\tilde E_1\oplus\RR\tilde E_2\oplus U(\xi\otimes 1)\oplus W(\xi^2\otimes 1).
\]
The restriction $\Gamma_{\tilde{\cC}}$ is then isomorphic to $\Gamma_{\OO_s}(G,\tilde{\gamma})$ with $\tilde{\gamma}=(g_1h,g_2h,g_3h)$.

\bigskip

\noindent $\boxed{\dim V_e=4}$ Then $(V_e\otimes\CC,*,Q)$ is a four-dimensional symmetric composition algebra over $\CC$, hence it is a para-quaternion algebra
(see e.g. \cite[Theorem 4.44]{EKmon}), which contains a unique para-unit. 
Since $\id\otimes\iota$ is a $\CC$-antilinear automorphism, it must fix this unique para-unit, which therefore 
must belong to $V_e\simeq V_e\otimes 1$. This gives a canonical choice of the element $\veps$ in Lemma \ref{le:e}, namely, the para-unit of $(V_e\otimes\CC,*,Q)$. 
Then, by Lemma~\ref{lm:para-unit}, we have $\dim\cC_e=4$. It follows as in the previous case that $\cC_h=\cC_{h^2}=0$. 
If the quaternion algebra $\cC_e$ is split (i.e., isomorphic to $M_2(\RR)$), then $\Gamma_\cC$ is isomorphic to $\Gamma_{\OO_s}(G,\gamma)$ with $\gamma=(e,g,g^{-1})$, $g\in G\setminus\langle h\rangle$. Otherwise $\Gamma_\cC$ is isomorphic to either $\Gamma_\OO(G,\HH,T)$ or $\Gamma_{\OO_s}(G,\HH,T)$, where $T\simeq\ZZ_2$.

\bigskip

\noindent $\boxed{\dim V_e=8}$ Then $(V_e\otimes\CC,*,Q)$ is either a para-Cayley or Okubo algebra over $\CC$
(see e.g. \cite[Theorem 4.44]{EKmon}).

\noindent $\bullet$ If $(V_e\otimes\CC,*,Q)$ is a para-Cayley algebra then, as above, we may take $\veps$ to be the para-unit of this algebra, and then $\cC=\cC_e$, i.e., $\Gamma_\cC$ is trivial.

\noindent $\bullet$ Otherwise, $\veps\otimes 1$ is an idempotent of the Okubo algebra $(V_e\otimes\CC,*,Q)$, and $\varphi\colon x\mapsto\beta_\rho\bigl((\pi\otimes\id)(x),\veps\otimes 1\bigr)$ is an order $3$ automorphism of $(V_e\otimes\CC,*,Q)$. By \cite[Proposition 3.4 and Theorem 3.5]{EPI96} (see also \cite{Eld17}), the subalgebra $\{x\in V_e\otimes\CC\mid \varphi(x)=x\}$ has dimension $4$ over $\CC$ (a para-quaternion algebra), hence $\cC_e=\{x\in V_e\otimes\CC\mid \varphi(x)=x=(\pi\otimes\iota)(x)\}$ has dimension $4$ over $\RR$. Since $V_h=V_e\xi$, $V_{h^2}=V_e\xi^2$, and $\dim V_e=8$, we have $V=V_e\oplus V_h\oplus V_{h^2}$, so the support of $\Gamma$ is $\langle h\rangle$. It follows that $\dim \cC_h=2=\dim \cC_{h^2}$, and $\cC_h$ and $\cC_{h^2}$ are isotropic subspaces of $\cC$ paired by the norm. Hence $\cC$ is split and $\Gamma_\cC$ is isomorphic to $\Gamma_{\OO_s}(G,\gamma)$ with $\gamma=(e,h,h^2)$.

\bigskip

The next result, which uses Lemma \ref{le:h_h2} and Theorem \ref{th:typeIII_C}, summarizes our arguments. We use the notation of Corollary \ref{co:real_Cayley} for the nontrivial gradings on Cayley algebras and write $\Gamma_\cC^\mathrm{triv}$ for the trivial grading on $\cC$.

\begin{theorem}\label{th:typeIII_twisted}
Let $G$ be an abelian group and let $(V,\LL,\beta,Q)$ be a real twisted composition of rank $8$ with $\LL=\RR\times\CC$. Then any $G$-grading of Type~III on $(V,\LL,\beta,Q)$ is isomorphic to one of the following gradings, 
for some element $h\in G$ of order $3$:
\begin{enumerate}
\item[(1.a)] 
$\Gamma\bigl(G,\Gamma_\OO(G,\RR,T),h\bigr)$ for an elementary abelian subgroup $T$ of order $8$ in $G$;
\item[(1.b)] 
$\Gamma\bigl(G,\Gamma_{\OO_s}(G,\RR,T,\mu),h\bigr)$ for an elementary abelian subgroup $T$ of order $8$ in $G$ and a nontrivial group homomorphism $\mu\colon T\rightarrow\{\pm 1\}$;
\item[(2.a)] 
$\Gamma\bigl(G,\Gamma_\OO(G,\CC,T),h\bigr)$ for an elementary abelian subgroup $T$ of order $4$ in $G$;
\item[(2.b)] 
$\Gamma\bigl(G,\Gamma_{\OO_s}(G,\CC,T,\mu),h\bigr)$ for an elementary abelian subgroup $T$ of order $4$ in $G$ and a nontrivial group homomorphism $\mu\colon T\rightarrow\{\pm 1\}$;
\item[(2.c)] 
$\Gamma\bigl(G,\Gamma_{\OO_s}(G,\gamma),h\bigr)$ with $\gamma=(g_1,g_2,g_3)\in (G\setminus\langle h\rangle)^3$ and $g_1g_2g_3=e$;
\item[(4.a)] 
$\Gamma\bigl(G,\Gamma_\OO(G,\HH,T),h\bigr)$ for a subgroup $T$ of order $2$ in $G$;
\item[(4.b)] 
$\Gamma\bigl(G,\Gamma_{\OO_s}(G,\HH,T),h\bigr)$ for a subgroup $T$ of order $2$ in $G$;
\item[(4.c)]
$\Gamma\bigl(G,\Gamma_{\OO_s}(G,\gamma),h\bigr)$ with $\gamma=(e,g,g^{-1})$, $g\in G\setminus\langle h\rangle$;
\item[(8.a)]
$\Gamma\bigl(G,\Gamma_\OO^\mathrm{triv},h\bigr)$; 
\item[(8.b)]
$\Gamma\bigl(G,\Gamma_{\OO_s}^\mathrm{triv},h\bigr)$;
\item[(8.c)]
$\Gamma\bigl(G,\Gamma_{\OO_s}(G,\gamma),h\bigr)$ with $\gamma=(e,h,h^2)$.
\end{enumerate}
Two $G$-gradings $\Gamma$ and $\Gamma'$, as above, are not isomorphic if they belong to different items. If $\Gamma$ and $\Gamma'$ belong to one item other than \textup{(2.c)}, then they are isomorphic if and only if $\langle h\rangle=\langle h'\rangle$ and, whenever applicable, $T=T'$, $\mu=\mu'$, and $g'\in\{g^{\pm 1}\}$.
If $\Gamma$ and $\Gamma'$ belong to item \textup{(2.c)}, then they are isomorphic if and only if $\langle h\rangle=\langle h'\rangle$ and there exist a permutation $\pi\in \textup{Sym}(3)$, $j\in\{0,1,2\}$ and $k\in\{\pm 1\}$ such that $g_i'=g^k_{\pi(i)}h^j$ for $i=1,2,3$.
\qed
\end{theorem}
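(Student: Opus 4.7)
The plan is to settle existence of the listed normal forms first, then classify isomorphisms between them. For existence, Theorem \ref{th:typeIII_C} reduces an arbitrary Type~III $G$-grading on $(V,\LL,\beta,Q)$ to a Cayley grading $\Gamma(G,\Gamma_\cC,h)$, so it suffices to determine which pairs $(\cC,\Gamma_\cC)$ and which orders of $h$ can occur. This is precisely what the analysis preceding the theorem does, organized by $\dim_\RR V_e\in\{1,2,4,8\}$: after recognizing $\veps\otimes 1$ as an idempotent of the symmetric composition algebra $(V_e\otimes\CC,*,Q)$, Lemma \ref{lm:para-unit} recovers $\cC_e$ from $V_e$, and matching $\cC_e$ against Corollary \ref{co:real_Cayley} yields precisely items (1.a)--(8.c), subject to the constraints $\cC_h=\cC_{h^2}=0$ whenever $\dim V_e<8$ and to the forced shapes of $\gamma$ in items (4.c) and (8.c).

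To separate items, I would use three invariants of the graded twisted composition. First, $\dim_\RR V_e$ fixes the numerical prefix. Second, the isomorphism type of the symmetric composition algebra $V_e\otimes\CC$ separates (2.a)--(2.b) from (2.c) (para-Hurwitz $\overline{\RR\times\RR}$ versus $\overline{\CC}$) and (8.a)--(8.b) from (8.c) (para-Cayley versus Okubo); in dimension $4$, (4.a)--(4.b) is separated from (4.c) by whether $\cC_e$ is a quaternion subalgebra ($4$-dimensional) or only $2$-dimensional, equivalently by whether $V_e$ itself is a para-quaternion algebra over $\RR$. Third, the isomorphism type of the Cayley algebra $\cC$ recovered via \eqref{eq:C}, namely whether $\cC$ is division or split, distinguishes the ``(a)'' from the ``(b)'' variants. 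Finally, $\langle h\rangle$ coincides with the distinguished subgroup of $\Gamma$, hence is an invariant.

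For isomorphisms within a single item, any $(\varphi_1,\varphi_0)\colon(V,\LL,\beta,Q)\to(V',\LL',\beta',Q')$ restricts to the canonical objects in $V_e$ (the unique idempotent $\veps$ with $\beta(\veps)=\veps$ and $Q(\veps)=1$, in all items except (2.c)) and hence induces an isomorphism of the associated graded Cayley algebras $\cC\to\cC'$. The criterion for graded Cayley algebras from Corollary \ref{co:real_Cayley} then yields the stated conditions on $T$, $\mu$, and $g$. Conversely, any graded isomorphism $\cC\to\cC'$ extends, via the functorial construction $\TC(\overline{\cC},\LL)$, to a graded isomorphism of twisted compositions. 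The replacement $h\leftrightarrow h^{-1}$ is realized by Lemma \ref{le:h_h2}, so $\langle h\rangle=\langle h'\rangle$ is the correct condition on the distinguished subgroup.

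The main obstacle is item (2.c), where $V_e\otimes\CC\simeq\overline{\CC}$ carries three idempotents $\veps_1,\veps_2,\veps_3$, each producing a different (although abstractly isomorphic) Cayley subalgebra $\cC\subset V\otimes\CC$ with a different Cartan grading. The computation already carried out in the case $\dim V_e=2$ shows explicitly that passing from $\veps_1$ to $\veps_2$ sends $\gamma=(g_1,g_2,g_3)$ to $(g_1h,g_2h,g_3h)$ via multiplication by the element $\xi=(1,\omega)\in\LL$ of degree $h$, while $\veps_3$ produces $(g_1h^2,g_2h^2,g_3h^2)$. Combining this with the natural action on Cartan gradings from Theorem \ref{th:gradings_Cayley} (permutation of coordinates by $\pi\in\textup{Sym}(3)$ and simultaneous inversion by $k\in\{\pm 1\}$) yields exactly the stated criterion $g_i'=g^k_{\pi(i)}h^j$; a short check shows that no further identifications arise, since the ambiguity $h\leftrightarrow h^2$ is already absorbed in the freedom on $(\pi,k,j)$. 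This completes the classification.
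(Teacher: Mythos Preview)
Your proposal follows the paper's approach closely: reduce to Cayley gradings via Theorem~\ref{th:typeIII_C}, then run the case analysis on $\dim_\RR V_e$ using Lemma~\ref{lm:para-unit} and Corollary~\ref{co:real_Cayley}, and handle the ambiguity of the idempotent in item~(2.c) by the explicit computation with $\xi$. The structure and the invariants you list are exactly what the paper uses.

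Two small inaccuracies should be corrected. First, your separation of (4.a)--(4.b) from (4.c) is misstated: in the case $\dim V_e=4$, the algebra $\cC_e$ is always $4$-dimensional (Lemma~\ref{lm:para-unit} applied to the unique para-unit of the para-quaternion algebra $V_e\otimes\CC$); the distinction is whether $\cC_e$ is the division algebra $\HH$ or the split algebra $M_2(\RR)$, not a dimension drop. Second, your claim that $\veps$ is unique ``in all items except (2.c)'' fails also in item~(8.c): the Okubo algebra has many idempotents. This does not damage the conclusion there, since the only parameter in (8.c) is $\langle h\rangle$, which is the distinguished subgroup and hence independent of the choice of $\veps$; but your uniqueness argument as written does not cover it.
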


Finally, we consider fine gradings. All $G$-gradings in items (1.a), (2.a), (4.a), and (8.a) are induced from the $\wt{G}$-grading $\Gamma\bigl(\wt{G},\Gamma_\OO\bigl(\wt{G},\RR,\ZZ_2^3),\tilde{h}\bigr)$, where $\wt{G}=\ZZ_2^3\times\ZZ_3$ and $\tilde{h}=(\bar 0,\bar 0,\bar 0,\bar 1)$, by means of a suitable homomorphism $G\to\wt{G}$. All gradings in items (1.b), (2.b), (4.b), and (8.b) are induced from $\Gamma\bigl(\wt{G},\Gamma_{\OO_s}\bigl(\ZZ_2^3\times\ZZ_3,\RR,\ZZ_2^3,\tilde\mu),\tilde{h}\bigr)$, where $\wt{G}$ and $\tilde{h}$ are as above, and $\tilde\mu\colon\ZZ_2^3\rightarrow\{\pm 1\}$ is the homomorphism $(\bar 1,\bar 0,\bar 0)\mapsto 1$, $(\bar 0,\bar 1,\bar 0)\mapsto 1$, $(\bar 0,\bar 0,\bar 1)\mapsto -1$ (see Corollary \ref{co:real_fine_Cayley}). All gradings in items (2.c), (4.c) and (8.c) are induced from $\Gamma\bigl(\wt{G},\Gamma_{\OO_s}(\wt{G},\tilde\gamma),\tilde{h}\bigr)$, where this time $\wt{G}=\ZZ^2\times\ZZ_3$, $\tilde{h}=(0,0,\bar 1)$, and $\tilde\gamma=\bigl((1,0,\bar 0),(0,1,\bar 0),(-1,-1,\bar 0)\bigr)$ (which gives the Cartan grading on $\OO_s$). The next result is now clear (compare with Corollary \ref{co:real_fine_Cayley}).

\begin{theorem}\label{th:typeIII_fine}
Consider the real twisted composition $\TC(\overline{\cC},\LL)$ where $\cC$ is a real Cayley algebra and $\LL=\RR\times\CC$. 
\begin{enumerate}
\item If $\cC=\OO$, then the twisted composition admits only one fine grading of Type~III, up to equivalence. The universal group of this grading is $\ZZ_2^3\times\ZZ_3$.
\item If $\cC=\OO_s$, then the twisted composition admits two fine gradings of Type~III, up to equivalence. The universal groups of these gradings are $\ZZ_2^3\times\ZZ_3$ and $\ZZ^2\times\ZZ_3$.\qed
\end{enumerate}
\end{theorem}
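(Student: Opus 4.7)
The plan is to read off the result from Theorem~\ref{th:typeIII_twisted} by identifying, in the classification list given there, the $G$-gradings that are fine and determining their universal groups.

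First, I would note that if $\Gamma$ is a Type~III grading on $\TC(\overline{\cC},\LL)$, then the Cayley algebra produced by the construction of Theorem~\ref{th:typeIII_C} (Equations~\eqref{eq:C}--\eqref{eq:Phi}) is necessarily isomorphic to $\cC$, since this construction inverts the functor $\TC(-,\LL)$ up to isomorphism. Hence for $\cC=\OO$, only the items labelled (1.a), (2.a), (4.a), (8.a) in Theorem~\ref{th:typeIII_twisted} can occur, while for $\cC=\OO_s$ the remaining items (1.b), (2.b), (4.b), (8.b), (2.c), (4.c), (8.c) are the only possibilities.

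Next, inspecting the paragraph preceding the statement of the theorem, all $G$-gradings in each of the three ``families'' of items are induced from a single distinguished grading over a specific group $\wt G$, namely:
\begin{itemize}
\item $\Gamma\bigl(\wt G,\Gamma_\OO(\wt G,\RR,\ZZ_2^3),\tilde h\bigr)$, with $\wt G=\ZZ_2^3\times\ZZ_3$ and $\tilde h=(\bar 0,\bar 0,\bar 0,\bar 1)$, grouping items (1.a), (2.a), (4.a), (8.a);
\item $\Gamma\bigl(\wt G,\Gamma_{\OO_s}(\wt G,\RR,\ZZ_2^3,\tilde\mu),\tilde h\bigr)$, with $\wt G=\ZZ_2^3\times\ZZ_3$, grouping items (1.b), (2.b), (4.b), (8.b);
\item $\Gamma\bigl(\wt G,\Gamma_{\OO_s}(\wt G,\tilde\gamma),\tilde h\bigr)$, with $\wt G=\ZZ^2\times\ZZ_3$ and $\tilde\gamma$ the Cartan triple, grouping items (2.c), (4.c), (8.c).
\end{itemize}
The remaining task is to verify that each of these three gradings is fine and that $\wt G$ is its universal group. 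For fineness, I would argue that any proper refinement $\Gamma'$ of such a grading is again of Type~III (since the induced grading on $\LL$ already has the nontrivial component $\RR\xi$ of minimal dimension), so by Theorem~\ref{th:typeIII_twisted} $\Gamma'$ belongs to the same family; then its restriction to $\cC$ would properly refine the given fine grading on $\cC$ (namely $\Gamma_\OO(\wt G,\RR,\ZZ_2^3)$, $\Gamma_{\OO_s}(\wt G,\RR,\ZZ_2^3,\tilde\mu)$, or the Cartan grading on $\OO_s$), contradicting Corollary~\ref{co:real_fine_Cayley}. Universality of $\wt G$ is immediate: the relations among the declared degrees of a homogeneous basis of $V$ are precisely those among the degrees on the Cayley component plus the independent generator $\tilde h=\deg\xi$ of order $3$, so the universal group is the product of the universal group of the Cayley grading with $\ZZ_3$.

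Finally, the three candidate fine gradings are pairwise inequivalent: the first uses $\cC=\OO$ while the other two use $\cC=\OO_s$, and the latter two restrict to non-equivalent fine gradings on $\OO_s$ (as their universal groups $\ZZ_2^3$ and $\ZZ^2$ are non-isomorphic, cf.\ Corollary~\ref{co:real_fine_Cayley}). The main subtlety I expect is the fineness argument above: one must rule out refinements by any abelian group, not merely by $\wt G$ itself, which is why it is essential to use the full classification of Theorem~\ref{th:typeIII_twisted} together with the known fineness statements for Cayley algebras from Corollary~\ref{co:real_fine_Cayley}.
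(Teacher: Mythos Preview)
Your approach is the paper's: use Theorem~\ref{th:typeIII_twisted} together with the reduction (stated in the paragraph just before the theorem) of every Type~III grading to a coarsening of one of three ``master'' gradings, then read off the fine gradings and their universal groups. The paper leaves this last step as ``clear''; you attempt to supply it, but the fineness argument you give has a gap.

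You assert that a proper refinement $\Gamma'$ of a master grading ``belongs to the same family'' and that ``its restriction to $\cC$'' would properly refine the underlying fine Cayley grading. Neither claim is justified. Over $\OO_s$ there are two families, and you have not excluded that a refinement of the $\ZZ_2^3\times\ZZ_3$ master grading gets classified by Theorem~\ref{th:typeIII_twisted} into the Cartan family, or vice versa. More seriously, there is no canonical ``restriction to $\cC$'' for $\Gamma'$: the Cayley subalgebra produced by~\eqref{eq:C} depends on the choice of $\veps\in V'_{e'}\subseteq V_e$, and the resulting subspace of $V\otimes\CC$ need not coincide with the original $\cC$ --- this is exactly the ambiguity worked out in the $\dim V_e=2$, $V_e\simeq\overline{\CC}$ case, where different idempotents yield different Cayley subalgebras and different triples~$\gamma$.

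A clean fix bypasses $\cC$ entirely. If $\Gamma'$ properly refines a master grading $\tilde\Gamma_i$, then (being Type~III) $\Gamma'$ is equivalent to a coarsening of some master $\tilde\Gamma_j$, whence $\tilde\Gamma_i$ is equivalent to a \emph{proper} coarsening of $\tilde\Gamma_j$. Now use your universal-group computation: the universal group of a proper coarsening of $\tilde\Gamma_j$ is a proper quotient of $U(\tilde\Gamma_j)$, but $\ZZ_2^3\times\ZZ_3$ and $\ZZ^2\times\ZZ_3$ are not quotients of one another (the first is finite; $\ZZ_2^3$ needs three generators while any quotient of $\ZZ^2$ needs at most two), and no master grading is equivalent to a proper coarsening of itself since that would decrease the number of homogeneous components. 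This closes the argument.
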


%
%

\section{Type~III gradings on simple real Lie algebras of type $D_4$}\label{s:Type_III_fine}

In view of Theorems \ref{th:real_tri_to_comp} and \ref{th:tri_isomorphism_2D4} and the fact that the restriction map gives an isomorphism of the automorphism group schemes of a trialitarian algebra $E$ and of the corresponding Lie algebra $\cL(E)$ of type $D_4$ (see Subsection \ref{ss:trialitarian_D4}), 
Theorem~\ref{th:typeIII_twisted} gives us the classification, up to isomorphism, of Type~III gradings on the central simple real Lie algebras of type $D_4$ as follows. 
For a twisted composition $(V,\LL,\beta,Q)$ of rank $8$ with $\LL=\RR\times\CC$ and each $G$-grading $\Gamma$ as in Theorem \ref{th:typeIII_twisted}, we take the induced $G$-grading on the trialitarian algebra $E=\End_\LL(V)$ and restrict it to $\cL=\cL(E)$, thus obtaining representatives of all isomorphism classes of Type~III gradings on $\cL$. The purpose of this section is to show what these gradings on $\cL$ look like.

As before, let $\omega=-\frac12+\frac{\sqrt{3}}{2}\bi$ (a cube root of unity in $\CC$) and $\xi=(1,\omega)\in\LL$. 
Let $\iota$ be the generator  of $\Gal(\CC/\RR)$ (the complex conjugation). The map
\[
\psi\colon\LL\otimes\CC\rightarrow \CC\times\CC\times\CC,\quad (b,c)\otimes c'\mapsto (bc',cc',\bar{c}c'),
\]
is an isomorphism of $\CC$-algebras. The $\CC$-antilinear automorphism $\id\otimes\iota$ on $\LL\otimes\CC$ corresponds under $\psi$ to the automorphism
\[
\hat\iota\colon\CC\times\CC\times\CC\rightarrow \CC\times\CC\times\CC,\quad
(c_1,c_2,c_3)\mapsto (\bar{c}_1,\bar{c}_3,\bar{c}_2),
\]
that is, $\psi\circ(\id\otimes\iota)=\hat\iota\circ\psi$.

Let $\cC$ be a real Cayley algebra (so $\cC$ is isomorphic either to $\OO$ or to $\OO_s$)
and consider the twisted Hurwitz composition $(V,\LL,\beta,Q)=\TC(\overline{\cC},\LL)$, namely, 
\[
V=\{x\in \cC\otimes\LL\otimes \CC\mid \tilde\iota(x)=x\}=(\RR 1\otimes\LL\otimes 1)\oplus(\cC^0\otimes\LL\otimes\bi),
\]
as in Equation~\eqref{eq:VTC}, where $\tilde\iota=\bar{}\otimes\id\otimes\iota$, $Q$ is the extension of the norm $n$ of $\cC$ to $V$, and $\beta$ is obtained from the para-Cayley multiplication of $\overline{\cC}$ by extending to $\cC\otimes\LL\otimes\CC$ as shown in Equation~\eqref{eq:TC} and then restricting to $V$. It will be convenient 
to denote $\pi(x)=\bar x$ for $x\in \cC$.

The trialitarian algebra $\End_\LL(V)$ is the subalgebra of fixed points of the $\CC$-antilinear operator $\mathrm{Int}(\pi)\otimes(\id\otimes\iota)$ on $\End_{\LL\otimes\CC}\bigl(\cC\otimes(\LL\otimes\CC)\bigr)\simeq\End_\RR(\cC)\otimes (\LL\otimes\CC)$.
Under the natural isomorphisms of $\CC$-algebras,
\[
\begin{split}
\End_\RR(\cC)\otimes(\LL\otimes\CC)\xrightarrow{\id\otimes\psi}\End_\RR(\cC)\otimes\CC^3&\simeq\big(\End_\RR(\cC)\otimes\CC\big)^3\\
&\simeq \End_\RR(\cC)^3\otimes \CC,
\end{split}
\]
the operator $\mathrm{Int}(\pi)\otimes(\id\otimes\iota)$ corresponds to $\tau\otimes\iota$ on $\End_\RR(\cC)^3\otimes\CC$, where 
\[
\tau(f_1,f_2,f_3)=(\bar f_1,\bar f_3,\bar f_2),
\]
with $\bar f=\mathrm{Int}(\pi)(f)=\pi f\pi\colon x\mapsto \overline{f(\bar x)}$.

Recall the triality Lie algebra (Definition~\ref{df:tri} with $\cS=\wb{\cC}$):
\begin{equation*} 
\tri(\cC)=\{(f_1,f_2,f_3)\in\frso(\cC,n)^3\mid f_1(x\bullet y)=f_2(x)\bullet y+x\bullet f_3(y)\ \forall x,y\in \cC\}.
\end{equation*}
This is a Lie algebra under the componentwise bracket, it is closed under cyclic permutations, and each of the three projections $(f_1,f_2,f_3)\mapsto f_i$ is an isomorphism $\tri(\cC)\to\frso(\cC,n)$.

\begin{proposition}\label{pr:D4_tri}
Let $\cL=\cL(E)$ be the Lie algebra of type $D_4$ attached to the trialitarian algebra $E=\End_\LL(V)$ where $V=\TC(\overline{\cC},\LL)$.
Under the isomorphism $\End_\LL(V)\otimes\CC\simeq \End_\RR(\cC)^3\otimes\CC$, $\cL$ corresponds to the Lie algebra of the elements of 
$\tri(\cC)\otimes\CC$ fixed under $\tau\otimes\iota$. 
\end{proposition}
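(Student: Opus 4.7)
The plan is to pass to the complexification and then apply Galois descent. Using Subsection~\ref{ss:trialitarian_D4}, $\cL=\Der_\LL(V,\beta,Q)$ sits inside $\End_\LL(V)$ as an $\RR$-Lie subalgebra, and therefore $\cL\otimes\CC=\Der_{\LL\otimes\CC}(V\otimes\CC,\beta,Q)$. It will suffice to (i)~identify this complexification with $\tri(\cC)\otimes\CC$ under the displayed isomorphism, and (ii)~show that the $\CC$-antilinear involution of $\End_\RR(\cC)^3\otimes\CC$ whose fixed points recover $\End_\LL(V)$ (and hence cut out $\cL$ from $\cL\otimes\CC$) is precisely $\tau\otimes\iota$.

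For step~(i), since $\LL\otimes\CC$ is split (isomorphic via $\psi$ to $\CC\times\CC\times\CC$), $V\otimes\CC$ acquires a cyclic composition structure over $\LL\otimes\CC$. Transporting this structure through $V\otimes\CC\simeq\cC\otimes\LL\otimes\CC$ and then through $\id\otimes\psi$, we obtain the cyclic composition on $\cC\otimes\CC^3$ coming from the para-Cayley product of $\wb{\cC}$ via Equation~\eqref{df:cyclic_prod}. An $(\LL\otimes\CC)$-linear derivation of this cyclic composition is a triple of $\CC$-linear endomorphisms of $\cC\otimes\CC$ satisfying exactly the triality identity of Definition~\ref{df:tri}, up to a cyclic relabelling fixed by $\rho$. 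Hence the derivation Lie algebra sits inside $\End_\RR(\cC)^3\otimes\CC$ as $\tri(\cC)\otimes\CC$.

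For step~(ii), the key is to track the complex conjugation through the chain of isomorphisms. By construction, $V$ is the fixed-point subspace of $\tilde\iota=\bar{\phantom{x}}\otimes\id\otimes\iota$ in $\cC\otimes\LL\otimes\CC$, so the tautological $\CC$-antilinear operator $\id\otimes\iota$ on $V\otimes\CC$ corresponds to $\tilde\iota$ under the canonical identification $V\otimes\CC\simeq\cC\otimes\LL\otimes\CC$. Using $\psi\circ(\id\otimes\iota)=\hat\iota\circ\psi$, this operator transports via $\id\otimes\psi$ to $\bar{\phantom{x}}\otimes\hat\iota$ on $\cC\otimes\CC^3$. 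Conjugation by $\bar{\phantom{x}}\otimes\hat\iota$ on $\End_{\CC^3}(\cC\otimes\CC^3)\simeq\End_\CC(\cC\otimes\CC)^3$ sends a triple $(F_1,F_2,F_3)$ to $(\pi F_1\pi,\,\pi F_3\pi,\,\pi F_2\pi)$ with all scalars complex-conjugated; after identifying $\End_\CC(\cC\otimes\CC)$ canonically with $\End_\RR(\cC)\otimes\CC$, the induced operator on $\End_\RR(\cC)^3\otimes\CC$ is precisely $\tau\otimes\iota$.

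Combining the two steps, $\cL$ is cut out inside $\tri(\cC)\otimes\CC$ as the fixed-point set of $\tau\otimes\iota$; these fixed points form a Lie algebra because $\tau$ preserves $\tri(\cC)$, realizing the transposition $(2,3)$ in the outer $S_3$-action noted after Definition~\ref{df:Tri}. I expect the main obstacle will be the bookkeeping involved in step~(ii): carefully composing the three identifications --- extension of scalars, splitting of $\LL\otimes\CC$ through $\psi$, and passage from $\End_\CC(\cC\otimes\CC)$ to $\End_\RR(\cC)\otimes\CC$ --- to verify that conjugation by $\bar{\phantom{x}}\otimes\hat\iota$ really does transport to $\tau\otimes\iota$ in the form stated.
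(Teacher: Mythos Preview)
Your proposal is correct and follows essentially the same approach as the paper: identify $\cL\otimes\CC$ with $\tri(\cC)\otimes\CC$ via the split cyclic composition, and recover $\cL$ as the fixed points of the Galois descent operator, which is computed to be $\tau\otimes\iota$. The only cosmetic difference is that the paper had already recorded, in the paragraph immediately preceding the proposition, that the descent operator $\mathrm{Int}(\pi)\otimes(\id\otimes\iota)$ on $\End_\RR(\cC)\otimes(\LL\otimes\CC)$ transports to $\tau\otimes\iota$ on $\End_\RR(\cC)^3\otimes\CC$, so its proof simply cites this; your step~(ii) re-derives the same fact by tracking the module-level operator $\pi\otimes\hat\iota$ and then passing to conjugation on endomorphisms, which is entirely equivalent.
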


\begin{proof}
The Lie algebra of type $D_4$ attached to the trialitarian algebra $\End_{\RR^3}(\cC\otimes\RR^3)$ is 
$\Der_{\RR^3}(\cC\otimes\RR^3,*,Q)$, which corresponds, under the isomorphism 
$\End_{\RR^3}(\cC\otimes\RR^3)\simeq\End_\RR(\cC)^3$, to the triality Lie algebra $\tri(\cC)$ (see Subsection \ref{ss:cyclic_compositions}). Extending the ground field from $\RR$ to $\CC$, we can identify 
$\Der_{\CC^3}(\cC\otimes\CC^3,*,Q)$ with $\tri(\cC)\otimes\CC$.

On the other hand, the Lie algebra $\cL$ is $\Der_\LL(V,\beta,Q)$, which is the subalgebra of fixed points in $\Der_{\LL\otimes\CC}(V\otimes\CC,\beta,Q)\subset\End_{\LL\otimes\CC}\big(\cC\otimes(\LL\otimes\CC)\big)\simeq\End_\RR(\cC)\otimes(\LL\otimes\CC)$ of the operator $\mathrm{Int}(\pi)\otimes(\id\otimes\iota)$. The result follows.
\end{proof}

As a module over $\LL=\RR\times\CC$, we can write $V=V_0\times V_1$ where
\[
V_0=\{x\in \cC\otimes(1,0)\otimes\CC\mid \tilde\iota(x)=x\}
=\RR(1\otimes(1,0)\otimes 1)\oplus \bigl(\cC^0\otimes(1,0)\otimes \bi\bigr),
\]
which is canonically isomorphic to 
\[
\tilde{V}_0\bydef\RR(1\otimes 1)\oplus \bigl(\cC^0\otimes\bi\bigr),
\]
the subspace of the elements of $\cC\otimes\CC$ fixed by $\pi\otimes\iota$.
Consider the projection onto the first component $\textup{pr}\colon\End_\LL(V)\rightarrow \End_\RR(V_0)$. 

\begin{corollary}\label{co:D4_so} 
The projection $\textup{pr}\colon\End_\LL(V)\to\End_\RR(V_0)\simeq\End_\RR(\tilde{V}_0)$ restricts to an isomorphism 
from $\cL$ onto $\frso(\tilde V_0,n)$, and the latter can be identified with the following $\RR$-form of 
$\frso(\cC\otimes \CC,n)$:
\[
\{f\in\frso(\cC\otimes \CC,n)\mid (\pi\otimes\iota)f=f(\pi\otimes\iota)\}.
\] 
\end{corollary}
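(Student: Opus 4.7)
The plan is to reduce the statement to the general fact, already recalled in the Introduction citing \cite[Proposition 43.15]{KMRT}, that for a trialitarian algebra of the form $E\simeq A\times\Cl(A,\sigma_A)$, the Lie algebra $\cL(E)$ is sent by projection onto the first factor isomorphically onto the space of skew-symmetric elements of $A$.

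First, I would decompose $V$ and $E$ according to the splitting $\LL=\RR\times\CC$. The decomposition $\LL=\RR\times\CC$ gives $V=V_0\oplus V_1$ as $\LL$-modules, where $V_0$ is the free rank-$8$ $\RR$-module described in the statement and $V_1$ is a free rank-$8$ $\CC$-module. Correspondingly,
\[
\End_\LL(V)=\End_\RR(V_0)\times\End_\CC(V_1),
\]
and $\mathrm{pr}$ is projection onto the first factor. The involution $\sigma$ of $E$ restricts on $\End_\RR(V_0)$ to the adjoint involution with respect to $Q|_{V_0}$, which takes values in $\RR\subset\LL$, and one checks from \eqref{eq:TC} that $Q|_{V_0}$ corresponds under the canonical identification $V_0\stackrel{\sim}{\to}\tilde V_0$, $x\otimes(1,0)\otimes c\mapsto x\otimes c$, to the restriction of the $\CC$-bilinear extension of $n$. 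Thus $\End_\RR(V_0)\simeq\End_\RR(\tilde V_0)$ with involution induced by $n|_{\tilde V_0}$.

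Next, by \cite[Proposition 43.15]{KMRT} (reviewed in the Introduction), the first factor of $E$ is precisely $A=\End_\RR(V_0)$ together with its orthogonal involution $\sigma_A$ induced by $Q|_{V_0}$, the second factor is $\Cl(A,\sigma_A)$, and the projection $\mathrm{pr}\colon\cL(E)\to A$ is an isomorphism onto $\mathrm{Skew}(A,\sigma_A)=\frso(V_0,Q|_{V_0})$. Transporting along $V_0\simeq\tilde V_0$ gives $\mathrm{pr}(\cL)=\frso(\tilde V_0,n)$.

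Finally, I would obtain the claimed description of $\frso(\tilde V_0,n)$ by Galois descent along $\RR\subset\CC$. The operator $\pi\otimes\iota$ on $\cC\otimes\CC$ is $\RR$-linear and $\CC$-antilinear, involutive, and its fixed-point set is $\tilde V_0$; hence the canonical map $\tilde V_0\otimes_\RR\CC\to\cC\otimes\CC$ is a $\CC$-linear isomorphism. An $\RR$-linear endomorphism of $\tilde V_0$ extends uniquely to a $\CC$-linear endomorphism of $\cC\otimes\CC$, and the endomorphisms so obtained are exactly those $f\in\End_\CC(\cC\otimes\CC)$ satisfying $(\pi\otimes\iota)f=f(\pi\otimes\iota)$. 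Since the $\CC$-bilinear extension of $n$ to $\cC\otimes\CC$ restricts to $n$ on $\tilde V_0$, such an $f$ lies in $\frso(\tilde V_0,n)$ if and only if its extension lies in $\frso(\cC\otimes\CC,n)$. This gives the desired identification. The argument is essentially bookkeeping; I expect no serious obstacle beyond verifying compatibility of the various norms in \eqref{eq:TC} under the identification $V_0\simeq\tilde V_0$.
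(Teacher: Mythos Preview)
Your argument is correct, but it takes a different route from the paper's proof. You invoke the general structural fact from \cite[Proposition 43.15]{KMRT} (as summarized in the Introduction) that for a trialitarian algebra $E\simeq A\times\Cl(A,\sigma_A)$ the projection onto $A$ carries $\cL(E)$ isomorphically onto $\mathrm{Skew}(A,\sigma_A)$, and then identify $A$ with $\End_\RR(V_0)$ and $Q|_{V_0}$ with $n|_{\tilde V_0}$. The paper instead works internally with the triality description just established in Proposition~\ref{pr:D4_tri}: it complexifies, uses that $\cL$ corresponds to the fixed points of $(f_1,f_2,f_3)\mapsto(\bar f_1,\bar f_3,\bar f_2)$ in $\tri(\cC\otimes\CC)$, and then applies the local triality principle (each projection $\tri\to\frso$ is an isomorphism) to conclude that the first projection maps these fixed points bijectively onto $\{f\in\frso(\cC\otimes\CC,n)\mid \bar f=f\}$.

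Your approach is cleaner and more conceptual for the corollary in isolation, since it appeals to a single black-box result already quoted in the paper. The paper's approach is more hands-on but has the advantage of staying within the explicit triple-of-maps framework set up in Proposition~\ref{pr:D4_tri}; this framework is exactly what is needed in the very next computations (Lemma~\ref{le:D4_Der_ad_T} and Corollary~\ref{co:D4_Der_ad_T}), where one manipulates the components $(f_1,f_2,f_3)$ directly. So the paper's route, while slightly more laborious for this corollary, feeds directly into the subsequent explicit descriptions. Your Galois-descent identification of $\frso(\tilde V_0,n)$ with the $(\pi\otimes\iota)$-equivariant elements of $\frso(\cC\otimes\CC,n)$ is essentially the same as the paper's final step.
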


\begin{proof}
After extending the ground field from $\RR$ to $\CC$, the projection $\textup{pr}$ corresponds, under the isomorphisms
$\End_\LL(V)\otimes\CC\simeq\End_\CC(\cC\otimes\CC)^3$ and $\End_\RR(V_0)\otimes\CC\simeq\End_\CC(\cC\otimes\CC)$, to the 
projection of $\End_\CC(\cC\otimes\CC)^3$ onto the first component. 

Under the isomorphism $\End_\RR(\cC)^3\otimes\CC\simeq\End_\CC(\cC\otimes\CC)^3$, the operator $\tau\otimes\iota$ 
corresponds to the mapping
$(f_1,f_2,f_3)\mapsto (\bar f_1,\bar f_3,\bar f_2)$ on $\End_\CC(\cC\otimes\CC)^3$, 
where $\bar f=\mathrm{Int}(\pi\otimes\iota)(f)=(\pi\otimes\iota)f(\pi\otimes\iota)$, so
$\cL$ corresponds to the fixed points of this mapping in $\tri(\cC\otimes\CC)$.
Hence, the projection onto the first component gives an isomorphism from $\cL$ onto the subalgebra of fixed points of 
$\pi\otimes\iota$ in $\frso(\cC\otimes\CC,n)$, which equals
$\{f\in\frso(\cC\otimes\CC,n)\mid f(\tilde V_0)\subseteq \tilde V_0\}$,
so it can be identified with $\frso(\tilde V_0,n)$.
\end{proof}

\begin{remark}\label{rem:inertia}
The quadratic space $(\cC,n)$ has inertia $(8,0)$ for $\cC\simeq\OO$ and $(4,4)$ for $\cC\simeq\OO_s$, hence 
the quadratic form on $\tilde V_0$ has inertia $(1,7)$ and $(5,3)$, respectively. As pointed out in the Introduction, these are the only cases in which the Lie algebra $\frso_{p,q}(\RR)$ can have Type~III gradings.
\end{remark}

We will now obtain a finer description of $\cL$ needed to deal with Type~III gradings. 
For $x\in \cC^0$, let $L_x$ and $R_x$ denote the left and right multiplications by $x$, and let $\ad_x=L_x-R_x$ and $T_x=L_x+R_x$. Note that $\pi L_x=-R_x\pi$, so $\pi\ad_x\pi=\ad_x$ and $\pi T_x\pi=-T_x$. Besides, any derivation $d\in\Der_\RR(\cC)$ commutes with $\pi$.

\begin{lemma}\label{le:D4_Der_ad_T}
Under the isomorphism $\End_\LL(V)\otimes\CC\simeq\End_\RR(\cC)^3\otimes\CC$, the Lie algebra $\cL$ corresponds to the direct sum
\begin{multline}\label{eq:D4_Der_ad_T}
\left(\{(d,d,d)\mid d\in\Der_\RR(\cC)\}\otimes 1\right)\\
\oplus\left(\{(\ad_x,-2L_x-R_x,L_x+2R_x)\mid x\in \cC^0\}\otimes 1\right)
\oplus\left(\{(T_x,-R_x,-L_x)\mid x\in \cC^0\}\otimes\bi\right).
\end{multline}
\end{lemma}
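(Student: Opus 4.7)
The plan is to combine Proposition~\ref{pr:D4_tri} with an explicit description of $\tri(\cC)=\tri(\wb{\cC},\bullet,n)$ that makes the action of $\tau\otimes\iota$ transparent. By Proposition~\ref{pr:D4_tri}, $\cL$ corresponds to the $\RR$-subalgebra of $\tri(\cC)\otimes\CC$ fixed by $\tau\otimes\iota$, where $\tau(f_1,f_2,f_3)=(\bar f_1,\bar f_3,\bar f_2)$ with $\bar f=\pi f\pi$. Since the fixed-point subspace of an antilinear involution on the $28$-dimensional complex space $\tri(\cC)\otimes\CC$ is a real form of dimension $28$, and since $\dim_\RR \Der_\RR(\cC)=14$ while $\dim_\RR\cC^0=7$, the three summands in \eqref{eq:D4_Der_ad_T} have total real dimension $14+7+7=28=\dim_\RR\cL$. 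Thus it suffices to show that each summand lies inside $\tri(\cC)$, is fixed by $\tau\otimes\iota$, and that the three are linearly independent.

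For membership in $\tri(\cC)$, I would verify the defining identity $f_1(x\bullet y)=f_2(x)\bullet y+x\bullet f_3(y)$ for each type of triple. For $(d,d,d)$ with $d\in\Der_\RR(\cC)$, the identity is immediate once one notes that $\overline{d(x)}=d(\bar x)$, which follows because $d(1)=0$ and $d$ preserves the trace form. For $(\ad_x,-2L_x-R_x,L_x+2R_x)$ and $(T_x,-R_x,-L_x)$, I would substitute into the para-Hurwitz product $u\bullet v=\bar u\bar v$ and use $\overline{ya}=\bar a\bar y=-a\bar y$ for $a\in\cC^0$, reducing each identity to a statement about associators. For the third triple, after cancellation the required identity becomes $[a,u,v]=[u,v,a]$, which holds by the cyclic symmetry of the associator in the alternative algebra $\cC$. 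The second triple reduces similarly to an associator identity verifiable via the skew-symmetry of $[\cdot,\cdot,\cdot]$.

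Next, I would check that each summand is fixed by $\tau\otimes\iota$, using the identities $\pi\ad_x\pi=\ad_x$, $\pi T_x\pi=-T_x$, $\pi L_x\pi=-R_x$, $\pi R_x\pi=-L_x$ and $\pi d\pi=d$ for $d\in\Der_\RR(\cC)$. For the first two summands, a direct computation gives $\tau$-invariance already at the level of $\tri(\cC)$, so the tensor factor $1$ is trivially fixed by $\iota$. For the third summand, the same computation yields $\tau(T_x,-R_x,-L_x)=(-T_x,R_x,L_x)=-(T_x,-R_x,-L_x)$, and this sign is compensated by $\iota(\bi)=-\bi$, so $(T_x,-R_x,-L_x)\otimes\bi$ is fixed.

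Linear independence is the final step: I would project onto the first component, mapping the three summands to $\Der_\RR(\cC)\otimes 1$, $\{\ad_x:x\in\cC^0\}\otimes 1$, and $\{T_x:x\in\cC^0\}\otimes\bi$ inside $\End_\RR(\cC)\otimes\CC$. Since $T_x(1)=2x$ while $d(1)=0$ and $\ad_x(1)=0$, and since $\ad_x$ is a derivation of the non-associative algebra $\cC$ only when $x$ lies in the nucleus (forcing $x=0$), the three real subspaces $\Der_\RR(\cC)$, $\RR\text{-span}\{\ad_x\}$, and $\RR\text{-span}\{T_x\}$ are in direct sum in $\End_\RR(\cC)$; tensoring the first two with $1$ and the last with $\bi$ preserves independence. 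The main obstacle in this proof will be the somewhat delicate associator manipulations required to establish the triality identity for the second and third triples; everything else is bookkeeping.
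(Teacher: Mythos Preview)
Your proposal is correct and follows essentially the same strategy as the paper: reduce via Proposition~\ref{pr:D4_tri} to computing the $(\tau\otimes\iota)$-fixed points in $\tri(\cC)\otimes\CC$, exhibit the three families of triples, and check invariance using the conjugation identities for $\pi$. The paper streamlines two of your steps: rather than verifying the triality identity separately for $(\ad_x,-2L_x-R_x,L_x+2R_x)$ and $(T_x,-R_x,-L_x)$, it checks only $(L_x,-T_x,R_x)\in\tri(\cC)$ by a single associator computation and then obtains the others by cyclic permutation and linear combination; and rather than arguing linear independence ad hoc, it invokes the known decomposition $\frso(\cC,n)=\Der_\RR(\cC)\oplus\ad_{\cC^0}\oplus T_{\cC^0}$ from \cite{Schafer}, which immediately gives the full decomposition of $\tri(\cC)$ and makes the dimension count automatic.
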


\begin{proof}
By Proposition \ref{pr:D4_tri}, we must compute the elements of $\tri(\cC)\otimes\CC$ fixed by $\tau\otimes\iota$. For $x,y,z\in \cC$, consider the \emph{associator} $(x,y,z)\bydef (xy)z-x(yz)$, which is an alternating function of $x$, $y$ and $z$. For any $x\in \cC^0$ and $u,v\in \cC$, we have:
\[
\begin{split}
L_x(u\bullet v)&=x(\bar u\bar v)=-(x,\bar u,\bar v)+(x\bar u)\bar v\\
  &=(\bar u,x,\bar v)+(x\bar u)\bar v\\
  &=(\bar ux+x\bar u)\bar v-\bar u(x\bar v)\\
  &=-T_x(u)\bullet v+u\bullet R_x(v),
\end{split}
\]
so $(L_x,-T_x,R_x)\in\tri(\cC)$, hence also $(R_x,L_x,-T_x)$ and $(T_x,-R_x,-L_x)$ are in $\tri(\cC)$, as well as
$(\ad_x,-T_x-L_x,R_x+T_x)=(\ad_x,-2L_x-R_x,L_x+2R_x)$. But, from \cite[(3.76)]{Schafer}, we have
\[
\frso(\cC,n)=\Der_\RR(\cC)\oplus L_{\cC^0}\oplus R_{\cC^0}=\Der_\RR(\cC)\oplus \ad_{\cC^0}\oplus T_{\cC^0},
\]
so we obtain
\begin{multline*}
\tri(\cC)=\{(d,d,d)\mid d\in\Der_\RR(\cC)\}\\
\oplus\{(\ad_x,-2L_x-R_x,L_x+2R_x)\mid x\in \cC^0\}
\oplus \{(T_x,-R_x,-L_x)\mid x\in \cC^0\}.
\end{multline*}
The result follows because we have $\pi d\pi=d$ for all $d\in \Der_\RR(\cC)$, and 
$\pi\ad_x\pi=\ad_x$, $\pi(-2L_x-R_x)\pi=L_x+2R_x$,
$\pi T_x\pi=-T_x$ and $\pi R_x\pi=-L_x$ for all $x\in \cC^0$.
\end{proof}

\begin{corollary}\label{co:D4_Der_ad_T}
Under the isomorphism $\End_\LL(V)\otimes\CC\simeq \End_\RR(\cC)\otimes (\LL\otimes\CC)$, 
the Lie algebra $\cL$ corresponds to
\begin{align*}
\bigl(\Der_\RR(\cC)\otimes 1\otimes 1\bigr)&\oplus\{\ad_x\otimes\xi\otimes 1+\sqrt{3}\,T_x\otimes\xi\otimes \bi \mid x\in \cC^0\}\\
&\oplus\{\ad_x\otimes\xi^2\otimes 1-\sqrt{3}\,T_x\otimes \xi^2\otimes \bi\mid x\in \cC^0\}.
\end{align*}
\end{corollary}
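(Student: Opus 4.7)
The approach is to transfer the description of $\cL$ from Lemma \ref{le:D4_Der_ad_T}, which places $\cL$ inside $\End_\RR(\cC)^3\otimes\CC$, to the required description inside $\End_\RR(\cC)\otimes(\LL\otimes\CC)$ via the $\CC$-algebra isomorphism $\psi$. Under the induced identification $\End_\RR(\cC)\otimes(\LL\otimes\CC)\simeq(\End_\RR(\cC)\otimes\CC)^3$, an elementary tensor $f\otimes\ell\otimes c$ becomes the triple with $i$-th entry $f\otimes\psi(\ell\otimes c)_i$. In particular, $1\otimes 1,\ \xi\otimes 1,\ \xi^2\otimes 1$ are sent to $(1,1,1),\ (1,\omega,\omega^2),\ (1,\omega^2,\omega)$, respectively, and scalar multiplication by $\bi$ is componentwise.

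The derivation summand is immediate: $d\otimes 1\otimes 1\mapsto(d,d,d)\otimes 1$, which matches the first summand of Lemma \ref{le:D4_Der_ad_T}. For the second summand, fix $x\in\cC^0$. The image of $\ad_x\otimes\xi\otimes 1+\sqrt{3}\,T_x\otimes\xi\otimes\bi$ is the triple whose $(i+1)$-st component equals $\omega^i(\ad_x+\sqrt{3}\,\bi\,T_x)$ for $i=0,1,2$. Plugging in $\omega=-\tfrac{1}{2}+\tfrac{\sqrt{3}}{2}\bi$ and using $\ad_x=L_x-R_x$, $T_x=L_x+R_x$, a direct calculation (reducing via the identities $\ad_x+3T_x=2(2L_x+R_x)$ and $\ad_x-T_x=-2R_x$, together with their $\omega^2$-analogues) shows that this image equals
\[
(\ad_x,-2L_x-R_x,L_x+2R_x)\otimes 1+\sqrt{3}\,(T_x,-R_x,-L_x)\otimes\bi,
\]
which lies in $\cL$ by Lemma \ref{le:D4_Der_ad_T}. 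The analogous computation with $\xi$ replaced by $\xi^2$ (so $\omega$ and $\omega^2$ exchange roles) and $\sqrt{3}$ replaced by $-\sqrt{3}$ yields $(\ad_x,-2L_x-R_x,L_x+2R_x)\otimes 1-\sqrt{3}\,(T_x,-R_x,-L_x)\otimes\bi$, again in $\cL$.

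To conclude, observe that for each fixed $x\in\cC^0$ the two images above are related to the canonical generators of the second and third summands of Lemma \ref{le:D4_Der_ad_T} by the matrix $\matr{1&\sqrt{3}\\1&-\sqrt{3}}$, which is invertible over $\RR$. Hence the subspace described in the Corollary is contained in $\cL$, and a dimension count $\dim\Der_\RR(\cC)+2\dim\cC^0=14+7+7=28=\dim_\RR\cL$ forces equality. The only real obstacle is keeping the arithmetic with $\omega$ and $\bi$ organized in the middle step; conceptually the whole argument is a mechanical change of basis in $\LL\otimes\CC$.
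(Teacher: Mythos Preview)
Your proof is correct and follows essentially the same route as the paper's: both transport the description of $\cL$ from Lemma \ref{le:D4_Der_ad_T} through the isomorphism $\End_\RR(\cC)\otimes(\LL\otimes\CC)\simeq(\End_\RR(\cC)\otimes\CC)^3$ induced by $\psi$, compute the image of $\ad_x\otimes\xi\otimes 1+\sqrt{3}\,T_x\otimes\xi\otimes\bi$ explicitly, and observe it lands on the element \eqref{eq:triples}. The only difference is that you make the final equality explicit via the invertible $2\times 2$ change-of-basis matrix and a dimension count, whereas the paper simply writes ``The result follows''; this is a welcome clarification but not a different argument.
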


\begin{proof}
$\cL$ corresponds to the Lie algebra in Equation~\eqref{eq:D4_Der_ad_T} under the isomorphism $\End_\LL(V)\otimes\CC\simeq \End_\RR(\cC)^3\otimes\CC$. On the other hand, under the isomorphism 
$\End_\RR(\cC)\otimes(\LL\otimes\CC)\simeq \big(\End_\RR(\cC)\otimes\CC\big)^3$, an element $f\otimes\xi^i\otimes c$ corresponds to $(f\otimes c,f\otimes\omega^i c,f\otimes \omega^{2i}c)$ for $i=0,1,2$. 
Hence, $\ad_x\otimes\xi\otimes 1+\sqrt{3}\,T_x\otimes \xi\otimes\bi$ corresponds to
\begin{multline*}
\Bigl(\ad_x\otimes 1 +\sqrt{3}\,T_x\otimes\bi,\,\ad_x\otimes\omega +\sqrt{3}\,T_x\otimes\bi\omega,\,
\ad_x\otimes\omega^2+\sqrt{3}\,T_x\otimes\bi\omega^2\Bigr)\\
=\Bigl(\ad_x\otimes 1+\sqrt{3}\,T_x\otimes\bi,\,(-2L_x-R_x)\otimes 1-\sqrt{3}\,R_x\otimes\bi,\,
(L_x+2R_x)\otimes 1-\sqrt{3}\,L_x\otimes\bi\Bigr),
\end{multline*}
which, under the isomorphism $\big(\End_\RR(\cC)\otimes\CC\big)^3\simeq\End_\RR(\cC)^3\otimes\CC$, corresponds to
\begin{equation}\label{eq:triples}
(\ad_x,-2L_x-R_x,L_x+2R_x)\otimes 1+\sqrt{3}\,(T_x,-R_x,-L_x)\otimes\bi.
\end{equation}
A similar computation applies to $\ad_x\otimes\xi^2\otimes 1-\sqrt{3}\,T_x\otimes \xi^2\otimes\bi$ (with $\sqrt{3}$ being replaced by $-\sqrt{3}$). The result follows.
\end{proof}

\begin{theorem}\label{th:TypeIII_so}
Let $G$ be an abelian group, $\cC$ a real Cayley algebra endowed with a $G$-grading $\Gamma_\cC$, and $h$ an element of order $3$ in $G$. Consider the corresponding grading $\Gamma=\Gamma(G,\Gamma_\cC,h)$ of Type~III on the twisted Hurwitz composition $\TC(\overline{\cC},\LL)$. The grading induced by $\Gamma$ on the associated Lie algebra $\cL$ of type $D_4$ corresponds, under the isomorphism $\cL\simeq\frso(\tilde V_0,n)$ of Corollary \ref{co:D4_so}, to the $G$-grading on $\frso(\tilde V_0,n)$ whose homogeneous component of degree $g\in G$ is given by
\begin{align*}
\frso(\tilde V_0,n)_g=\Bigl(\Der_\RR(\cC)_g\otimes 1\Bigr)
&\oplus\left\{\ad_x\otimes 1+\sqrt{3}\,T_x\otimes\bi\mid x\in \cC^0_{gh^2}\right\}\\
&\oplus \left\{\ad_x\otimes 1-\sqrt{3}\,T_x\otimes\bi\mid x\in \cC^0_{gh}\right\}.
\end{align*} 
\end{theorem}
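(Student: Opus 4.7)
The proof will proceed by directly computing the induced grading on $\End_\LL(V)$, expressed first in the explicit realization of $\cL$ given by Corollary~\ref{co:D4_Der_ad_T}, and then transferred to $\frso(\tilde V_0,n)$ via the projection of Corollary~\ref{co:D4_so}.

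First I would observe that the grading $\Gamma=\Gamma(G,\Gamma_\cC,h)$ on $V$ extends by $\CC$-linearity to $V\otimes\CC=\cC\otimes\LL\otimes\CC$ and coincides, by the very definition of a Cayley grading, with the tensor product of three gradings: the grading $\Gamma_\cC$ on $\cC$, the $\langle h\rangle$-grading on $\LL$ with $\deg\xi=h$, and the trivial grading on $\CC$. Under the natural identification $\End_{\LL\otimes\CC}(V\otimes\CC)\simeq\End_\RR(\cC)\otimes\LL\otimes\CC$, the grading induced from $V\otimes\CC$ is therefore the tensor product of the induced grading on $\End_\RR(\cC)$ (coming from $\Gamma_\cC$), the $\langle h\rangle$-grading on $\LL$, and the trivial grading on $\CC$. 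Explicitly, for $f\in\End_\RR(\cC)_d$ and $\ell\in\LL_{h^i}$, any simple tensor $f\otimes\ell\otimes c$ is homogeneous of degree $d\cdot h^i$.

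Next I would read off the degrees of the three summands appearing in the description of $\cL\subset\End_\RR(\cC)\otimes\LL\otimes\CC$ provided by Corollary~\ref{co:D4_Der_ad_T}. Since $L_x$ and $R_x$ shift the $\Gamma_\cC$-degree by $\deg x$, the operators $\ad_x=L_x-R_x$ and $T_x=L_x+R_x$ lie in $\End_\RR(\cC)_{\deg x}$; and any derivation lies in the homogeneous component corresponding to its $\Gamma_\cC$-shift. Hence the element $d\otimes 1\otimes 1$ is of degree $\deg d$; for $x\in\cC^0_{g'}$, the element $\ad_x\otimes\xi\otimes 1+\sqrt{3}\,T_x\otimes\xi\otimes\bi$ has degree $g'h$ and $\ad_x\otimes\xi^2\otimes 1-\sqrt{3}\,T_x\otimes\xi^2\otimes\bi$ has degree $g'h^2$. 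Imposing degree equal to $g\in G$ forces $d\in\Der_\RR(\cC)_g$ in the first case, $g'=gh^{-1}=gh^2$ in the second, and $g'=gh^{-2}=gh$ in the third, giving the homogeneous component $\cL_g$ in the ambient algebra.

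Finally, I would apply the projection $\textup{pr}\colon\cL\xrightarrow{\sim}\frso(\tilde V_0,n)$ of Corollary~\ref{co:D4_so}, which under $\psi\colon\LL\otimes\CC\to\CC^3$ corresponds to taking the first coordinate $(b,c)\otimes c'\mapsto bc'$. Because $\xi=(1,\omega)$ and $\xi^2=(1,\omega^2)$ both have first coordinate $1$, this projection sends $\xi\otimes 1,\,\xi^2\otimes 1\mapsto 1$ and $\xi\otimes\bi,\,\xi^2\otimes\bi\mapsto\bi$. Substituting these into the description of $\cL_g$ obtained in the previous paragraph yields exactly the formula in the statement. The only delicate point in this argument is the bookkeeping needed to confirm that the grading induced from $V\otimes\CC$ on $\End_{\LL\otimes\CC}(V\otimes\CC)$ is precisely the tensor-product grading on $\End_\RR(\cC)\otimes\LL\otimes\CC$ under the natural identification; once this is in place, the rest is a direct reading of degrees and a substitution through the projection isomorphism already proved.
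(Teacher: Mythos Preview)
Your proposal is correct and follows essentially the same approach as the paper: identify the degrees of the three families of elements in the description of $\cL$ given by Corollary~\ref{co:D4_Der_ad_T}, using that the induced grading on $\End_\RR(\cC)\otimes\LL\otimes\CC$ is the tensor product of the grading on $\End_\RR(\cC)$ coming from $\Gamma_\cC$ with the $\langle h\rangle$-grading on $\LL$, and then transport the result to $\frso(\tilde V_0,n)$ via Corollary~\ref{co:D4_so}. Your treatment of the projection step (observing that the first coordinate of $\xi$ and $\xi^2$ is $1$) is slightly more explicit than the paper's, but the logic is identical.
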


\begin{proof}
Recall that the grading $\Gamma$ on $\TC(\overline{\cC},\LL)$ is induced by $\Gamma_\cC$ on $\cC$ and the grading on $\LL$ defined by $\deg(\xi)\bydef h$. Hence, for any $x\in \cC^0_g$, the element $\ad_x\otimes\xi\otimes 1 +\sqrt{3}\,T_x\otimes\xi\otimes\bi$ is homogeneous of degree $gh$ in $\End_\RR(\cC)\otimes(\LL\otimes\CC)$, while the element 
$\ad_x\otimes\xi^2\otimes 1 -\sqrt{3}\,T_x\otimes\xi^2\otimes\bi$ is homogeneous of degree $gh^2$. 
Together with $d\otimes 1\otimes 1$ for $d\in\Der_\RR(\cC)_g$, these elements span 
the Lie algebra corresponding to $\cL$ under the isomorphism of Corollary~\ref{co:D4_Der_ad_T}.
More precisely, $\ad_x\otimes\xi\otimes 1 +\sqrt{3}\,T_x\otimes\xi\otimes\bi$ corresponds to the element of $\End_\RR(\cC)^3\otimes\CC$ given by \eqref{eq:triples}, and $\ad_x\otimes\xi^2\otimes 1 -\sqrt{3}\,T_x\otimes\xi^2\otimes\bi$ 
corresponds to a similar element.

Since $\tilde V_0=\RR(1\otimes 1)\oplus (\cC^0\otimes\bi)$, we may identify $\frso(\tilde V_0,n)$ with the subalgebra $\bigl(\Der_\RR(\cC)\otimes 1\bigr)\oplus\bigl(\ad_{\cC^0}\otimes 1\bigr)\oplus\bigl(T_{\cC^0}\otimes\bi\bigr)$ in $\frso(\cC,n)\otimes\CC\simeq\frso(\cC\otimes\CC,n)$, and Corollary~\ref{co:D4_so} completes the proof.
\end{proof}

With this result, it is now easy to describe explicitly, say, the fine gradings of Type~III on the central simple real Lie algebras of type $D_4$ that admit them. In view of Theorem~\ref{th:typeIII_fine} and Remark~\ref{rem:inertia}, $\frso_{7,1}(\RR)$, being isomorphic to the Lie algebra of derivations of $\TC(\overline{\OO},\RR\times\CC)$, admits a unique (up to equivalence) fine grading of Type~III, whose universal group is $\ZZ_2^3\times\ZZ_3$ and which has $14$ homogeneous components of dimension $1$ (coming from $\cC^0$) and $7$ of dimension $2$ (coming from $\Der_\RR(\cC)$). 
Similarly, $\frso_{5,3}(\RR)$, which is associated to $\TC(\overline{\OO_s},\RR\times\CC)$, admits two (up to equivalence) fine gradings of Type~III, with universal groups $\ZZ_2^3\times\ZZ_3$ and $\ZZ^2\times\ZZ_3$: the first analogous to the above and the second having $26$ homogeneous components of dimension $1$ and $1$ of dimension $2$.


\end{document}